\def\aa{{\mathcal A}}
\def\cc{{\mathcal C}}
\def\mm{{\mathcal M}}
\def\ss{{\mathcal S}}
\def\ffi{\varphi}
\def\eps{\varepsilon}
\def\dst{\displaystyle}
\renewcommand{\Im}{\mathrm{Im}\,}
\DeclareMathOperator{\supp}{supp}
\DeclareMathOperator{\cotan}{cotan}
\def\N{{\mathbb{N}}}
\def\Q{{\mathbb{Q}}}
\def\R{{\mathbb{R}}}
\def\S{{\mathbb{S}}}
\def\T{{\mathbb{T}}}
\def\Z{{\mathbb{Z}}}
\newcommand{\norm}[1]{{\left\|{#1}\right\|}}
\newcommand{\ent}[1]{{\left[{#1}\right]}}
\newcommand{\scal}[1]{{\left\langle{#1}\right\rangle}}
\newenvironment{notation}[1][]{\vskip1pt\noindent\rm\textit{Notation}\,:\ }{\rm\vskip1pt}
\newenvironment{definition}[1][]{\vskip3pt\noindent\sl\textbf{Definition.}\ }{\rm\vskip3pt}
\newenvironment{remark}[1][]{\vskip3pt\noindent\textbf{Remark.}\ }{\rm\vskip3pt}
\newenvironment{example}[1][]{\vskip3pt\noindent\textbf{Example.}\ }{\rm\vskip3pt}
\newtheorem{lemma}{Lemma}[section]
\newtheorem{proposition}[lemma]{Proposition}
\newtheorem{theorem}[lemma]{Theorem}
\newtheorem{corollary}[lemma]{Corollary}
\date{\today}
\begin{document}
\title{A dynamical system approach to Heisenberg Uniqueness Pairs}

\author{Philippe Jaming \& Karim Kellay}

\address{Univ. Bordeaux, IMB, UMR 5251, F-33400 Talence, France.
CNRS, IMB, UMR 5251, F-33400 Talence, France.}
\email{Philippe.Jaming@u-bordeaux1.fr, Karim.Kellay@math.u-bordeaux1.fr}

\begin{abstract}
Let $\Lambda$ be a set of lines in $\mathbb{R}^2$ that intersect at the origin. For 
$\Gamma\subset\mathbb{R}^2$ a smooth curve, we denote by $\mathcal{A}\mathcal{C}(\Gamma)$ the 
subset of finite measures on $\Gamma$ that are absolutely continuous with respect to arc length 
on $\Gamma$. For $\mu\in \mathcal{A}\mathcal{C}(\Gamma)$, $\widehat{\mu}$ denotes the Fourier transform of $\mu$.
Following Hedenmalm and Montes-Rodr\'iguez, we will say that $(\Gamma,\Lambda)$ is a Heisenberg 
Uniqueness Pair if $\mu\in\mathcal{A}\mathcal{C}(\Gamma)$ is such that $\widehat{\mu}=0$
on $\Lambda$, then $\mu=0$. The aim of this paper is to provide new tools to establish this 
property. To do so, we will reformulate the fact that $\widehat{\mu}$ vanishes on $\Lambda$ in 
terms of an invariance property of $\mu$ induced by $\Lambda$. This leads us to a dynamical system 
on $\Gamma$ generated by $\Lambda$. In many cases, the investigation of this dynamical system allows us to 
establish that $(\Gamma,\Lambda)$ is a Heisenberg Uniqueness Pair. This way we both unify proofs 
of known cases (circle, parabola, hyperbola) and obtain many new examples. This method also allows 
to have a better geometric intuition on why $(\Gamma,\Lambda)$ is a Heisenberg Uniqueness Pair.
As a side result, we also give the first instance of a positive result in the classical
Cram\'er-Wold theorem where finitely many projections suffice to characterize a measure (under strong support
constraints).
\end{abstract}

\subjclass{42A68;42C20}

\keywords{Uncertainty principles;annihilating pairs;Heisenberg pairs;Cram\'er-Wold Theorem}

\maketitle


\section{Introduction}

The aim of this paper is to contribute to the study of Fourier uniqueness sets of measures supported on planar curves.
More precisely, in the terminology introduced in \cite{HMR}, we will provide new tools for proving that a 
piecewise smooth curve $\Gamma$ and a set $\Lambda$ of lines through the origin form a \emph{Heisenberg Uniqueness Pairs} (HUP).

This concept of HUP is an extension of 
the notion of annihilating pairs for the Fourier transform on $L^2(\R)$ to the setting of measures
{\it see e.g.} \cite{AB,Be}, Havin and J\"oricke's book \cite{HJ} or the survey \cite{FS}.
Its original motivations comes from sets of uniqueness of PDEs (in particular for the Klein-Gordon equation).
We will show that the problem can be reformulated in terms of a dynamical system on $\Gamma$. This will allow us to
find new proofs for many existing results as well as to find many new cases that seemed out of reach with
the methods used so far.

Let us now be more precise. If $\mu$ denotes a finite complex-valued Borel measure in the plane $\R^2$. The Fourier transform 
of $\mu$ is defined by
$$
\widehat{\mu}(x,y)=\int_{\R^2} e^{-i(xs+yt)}\,\mbox{d}\mu(s,t).
$$

For $\Gamma\subset\R^2$ that is the finite union of smooth curves that are disjoint (except possibly for 
the endpoints),
denote by $\mm(\Gamma)$ the set of finite complex-valued Borel measures supported in $\Gamma$. Moreover, we denote 
by $\aa\cc(\Gamma)$ the subset of $\mm(\Gamma)$ that consists of measures that are absolutely continuous with respect 
to arc length on $\Gamma$.

\begin{definition} 
Let $\Lambda\subset\R^2$ and $\Gamma$ a finite union of smooth disjoint curves. Then
$(\Gamma,\Lambda)$ is a \emph{Heisenberg Uniqueness Pair} if $\mu\in\aa\cc(\Gamma)$ and
$\widehat{\mu}\Big|_{\Lambda}=0$ implies $\mu=0$.
\end{definition}

Clearly, some of the invariance properties of the Fourier transform transfer to HUPs, namely:

\begin{itemize}
\item[{\em [Inv 1]}] Fix $(s_0,t_0),(x_0,y_0)\in\R^2$. Then $\bigl(\Gamma,\Lambda\bigr)$ is a HUP if and only if
$\bigl(\Gamma-(s_0,t_0),\Lambda-(x_0,y_0)\bigr)$ is a HUP.

\item[{\em [Inv 2]}] Fix $T$ a linear invertible transformation $\R^2\to\R^2$ and denote by $T^*$ its adjoint.
Then $\bigl(\Gamma,\Lambda\bigr)$ is a HUP if and only if $\bigl(T^{-1}(\Gamma),T^*(\Lambda)\bigr)$
is a HUP.
\end{itemize}

This notion was introduced by Hedenmalm and Montes-Rodr\'iguez \cite{HMR} who considered the
case where $\Gamma$ is a hyperbola $\{(x,y\in\R^2\,: xy=1\}$ and $\Lambda=\alpha\Z\times\{0\}\cup\{0\}\times\beta\Z$
is the lattice cross {\it i.e.} a discrete set included in two lines. The case of $\Gamma$ an ellipse and $\Lambda$
two lines was soon after settled independently by Sj\"olin 
\cite{Sjo1} and Lev \cite{Le}. Finally Sj\"olin
\cite{Sjo} considered the case where $\Gamma$ is a parabola, thus completing the study of quadratic curves.

Our aim here is to give more geometric proofs of the results of Sj\"olin and Lev that allow us to extend their
results to the case where $\Gamma$ is a rather general curve and $\Lambda$ is a union of two intersecting lines.
According to the invariance properties we can assume that the lines intersect at the origin
and write $\ell_\theta=\{(t\cos\theta,t\sin\theta),\ t\in\R\}$ for $\theta\in[0,\pi)$.

Our starting point was Sj\"olin's proof that parabolas and two well chosen lines form an HUP. In particular, Sj\"olin
used a simple change of variable that directly reformulates as Lemma \ref{lem:fund}-Corollary \ref{cor:2} in our case.
These results show that, for $\Lambda=\ell_{\theta_1}\cup\ell_{\theta_2}$ a set of two lines through the origin,
if $\mu\in\aa\cc(\Gamma)$ and $\widehat{\mu}\Big|_{\Lambda}=0$ then there is a mapping
$\Phi\,:\Gamma\to\Gamma$ that leaves $\mu$ invariant. Moreover this mapping has a simple geometric interpretation.
We will then be able to deduce from the properties of the dynamical system generated by $\Phi$ (existence of a wandering set,
existence of attractive points and ergodicity) that $(\Gamma,\Lambda)$ is a Heseinberg Uniqueness Pair.
Note that dynamical systems already play a crucial role in \cite{HMR,CMHMR}.

Let us here summarize our main results:

\medskip

\noindent{\bf Main Theorem.} {\sl Let $\Gamma$ be any of the following curves:
\begin{enumerate}
\renewcommand{\theenumi}{\roman{enumi}}
\item\label{mt2} the graph of $\psi(t)=|t|^\alpha$, $t\in\R$, $\alpha>0$;

\item\label{mt3} a hyperbola;

\item\label{mt4} a polygon;

\item\label{mt5} an ellipse.
\end{enumerate}

Then there exists a set $E\subset(-\pi/2,\pi/2)\times(-\pi/2,\pi/2)$ of positive measure such that,
if $(\theta_1,\theta_2)\in E$, $(\Gamma,\ell_{\theta_1}\cup\ell_{\theta_2})$ is a Heisenberg Uniqueness Pair.}

\medskip

The actual results are both more general and more precise, we refer to Theorem \ref{cor:thmain} for \ref{mt2}),
and to Proposition \ref{prop:poly}
for \ref{mt4}). To prove those results we show that $\Phi$ has many wandering sets.
We prove \ref{mt3}) in Theorem \ref{th:hyp} by first transferring the problem to the circle
(using a simple transform from projective geometry) in order to prove that here too
$\Phi$ has many wandering sets. Finally, the case \ref{mt5}) is proved in Theorem \ref{th:circle}
using ergodic theory. In this case, the map $\Phi$ is an irrational rotation. Our technique shows that
the same result holds if $\Gamma$ is any smooth convex closed curve such that the map $\Phi$
has irrational rotation number. However we are also able to construct an example of a 
smooth convex closed curve and a set of two lines that form a HUP and such that the map $\Phi$ has rational rotation number.

\medskip

Let us now explain how our results apply to PDEs.

Let $p$ be a polynomial of two variables and let $\Gamma=\{(s,t)\in\R^2\,: p(s,t)=0\}$. Then
\begin{equation}
\label{eq:pdfF}
p(i\partial_x,i\partial_y)\widehat{\mu}(x,y)
=\int_{\R^2}e^{-i(xs+yt)}p(s,t)\,\mbox{d}\mu(s,t).
\end{equation}
Therefore, if $\mu\in\aa\cc(\Gamma)$ then $F=\widehat{\mu}$ solves the PDE
\begin{equation}
\label{eq:PDE}
p(i\partial_x,i\partial_y)F=0.
\end{equation}

Now $(\Gamma,\ell_{\theta_1}\cup\ell_{\theta_2})$ is a Heisenberg Uniqueness pair
if and only if for every solution $F$ of \eqref{eq:PDE} such that
$F=\widehat{\mu}$ with $\mu\in\aa\cc(\Gamma)$, $F(x,x\cotan \theta_1)=F(x,x\cotan \theta_2)=0$
for every $x$ implies $F=0$.

We can then reformulate our results in terms of solutions of certain PDEs (and more generally
for certain pseudo-differential equations). The following theorem is then a reformulation of the main theorem.

\medskip

\noindent{\bf Theorem.} {\sl Let $\theta_1\not=\theta_2\in(0,\pi)$, $a_j=\cotan \theta_j$ and $\alpha>0$.
Assume that $F\in\cc^2(\R^2)$ satisfies one of the following equations:
\begin{enumerate}
\renewcommand{\theenumi}{\roman{enumi}}
\item \emph{Shr\"odinger Equation} 
$$
i\partial_x F\pm |\Delta_y|^{\alpha/2}F=0.
$$
In this case, take $\Gamma=\{(t,|t|^\alpha),t\in\R\}$;

\item \emph{Helmholtz equation} 
$$
\partial_x^2 F+\partial_y^2F=-\alpha^2F.
$$
In this case, take $\Gamma=\{(x,y)\in\R^2\,:x^2+y^2=\alpha^2\}$ and further assume that $\dst\frac{\theta_1-\theta_2}{\pi}\notin\Q$;

\item \emph{Klein-Gordon equation} 
$$
\partial_x^2 F-\partial_y^2F=\alpha^2F.
$$
In this case, take $\Gamma=\{(x,y)\in\R^2\,:x^2-y^2=\alpha^2\}$ and further assume that $|\theta_1-\theta_2|\not=\pi/2$.
\end{enumerate}

If  $F=\widehat{\mu}$ with $\mu\in\aa\cc(\Gamma)$ and 
\begin{equation}
\label{eq:pde}
F(x,a_1x)=F(x,a_2x)=0\qquad\mbox{ for all }x\in\R,
\end{equation}
then $F=0$.}

\medskip

Moreover, the result is true if, for the Shr\"odinger Equation, \eqref{eq:pde} only holds for $x$ in a set of positive measure,
while for the Helmholtz Equation, \eqref{eq:pde} needs only to hold for $x$ in a discrete set.
\medskip

One would of course like to relax the condition $F=\widehat{\mu}$ with $\mu\in\aa\cc(\Gamma)$ to $F=\widehat{\mu}$
with $\mu$ a bounded measure on $\R^2$ (which would then necessarily be supported in $\Gamma$). It would be natural
to say that $(\Gamma,\ell_{\theta_1}\cup\ell_{\theta_2})$ is a \emph{strong} Heisenberg Uniqueness Pair
in that case.

\medskip

Another application of our results is to the classical Cram\'er-Wold Theorem on the
characterization of a probability measure from its projections. In order to state
those results, we need some further notation. Let $\mm(\R^2)$ be the set of finite signed measures on $\R^2$. The Radon transform
can be defined on $\mm(\R^2)$ in various equivalent ways. Roughly speaking,
for $\theta\in\S^1$, the Radon transform of $\mu\in\mm(\R^2)$ in direction $\theta$
is the marginal probability measure of $\mu$ in direction $\theta$.
To be more rigorous, we first need to define \emph{dual Radon transform}:
for a bounded $g\in \cc(\S^1\times\R)$ and $x\in\R^2$ 
$$
R^*[g](x)=\int_{\S^1}g(\theta,\scal{x,\theta})\,\mbox{d}\theta.
$$

\begin{definition}
For $\mu\in\mm(\R^2)$, the Radon transform of $\mu$ is the measure
$\nu=R[\mu]\in\mm(\S^1\times\R)$ defined by
\begin{equation}
\label{eq:radon2}
\int_{\S^1\times\R}g(\theta,s)\,\mbox{d}\nu(\theta,s)
=\int_{\R^2}R^*[g](x)\,\mbox{d}\mu(x)\qquad\mbox{for every }g\in \cc_c(\S^1\times\R).
\end{equation}
\end{definition}

It seems difficult to trace back the first occurrence of the Radon Transform for measures.
The properties we need can be found in \cite{HHK,HQ,BL}. In particular, the following properties
have been established: let $\mu\in\mm(\R^2)$ $\nu=R[\mu]\in\mm(\S^1\times\R)$

\begin{enumerate}
\renewcommand{\theenumi}{\roman{enumi}}
\item Indeed, $R[\mu](\theta,\cdot)$ is the push-forward $\pi_\theta*\mu$
where $\pi_\theta$ is the projection $\R^2\to\R$ given by $\pi_\theta x=\scal{x,\theta}$, that is
for every Borel set $E\subset\R$, $R[\mu](\theta,E)=\mu\bigl(\pi_\theta^{-1}(E)\bigr)$.

\item let $\widehat{\nu}_\xi$ be the partial Fourier transform in the ``$s$''-variable, that is
for $\xi\in\R$, $\widehat{\nu}_\xi$ is the measure defined on $\S^1$ by
$$
\int_{S^1}G(\theta)\,\mbox{d}\widehat{\nu}_\xi(\theta)=
\int_{\S^1}\int_\R G(\theta)e^{-is\xi}\,\mbox{d}\nu(\theta,s)
$$
for every $G\in\cc(\S^1)$.

\begin{theorem}[Fourier-Slice Theorem]
For $\mu\in\mm(\R^2)$, $\widehat{\nu}_\xi=\widehat{\mu}(\xi\theta)\,\mbox{d}\theta$.
\end{theorem}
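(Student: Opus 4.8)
The plan is to test the claimed identity against an arbitrary $G\in\cc(\S^1)$: since both sides are measures on $\S^1$, it suffices to prove that
$$
\int_{\S^1}G(\theta)\d\widehat{\nu}_\xi(\theta)=\int_{\S^1}G(\theta)\widehat{\mu}(\xi\theta)\d\theta .
$$
Unfolding the definition of the partial Fourier transform $\widehat{\nu}_\xi$, the left-hand side equals $\dst\int_{\S^1}\int_\R G(\theta)e^{-is\xi}\d\nu(\theta,s)$, so the whole matter reduces to evaluating this pairing of $\nu$ against the function $g(\theta,s)=G(\theta)e^{-is\xi}$.

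First I would feed this $g$ into the defining relation \eqref{eq:radon2} of $\nu$. Its dual Radon transform is read off directly from the definition,
$$
R^*[g](x)=\int_{\S^1}g(\theta,\scal{x,\theta})\d\theta=\int_{\S^1}G(\theta)e^{-i\xi\scal{x,\theta}}\d\theta ,
$$
so \eqref{eq:radon2} gives $\dst\int_{\S^1}\int_\R G(\theta)e^{-is\xi}\d\nu(\theta,s)=\int_{\R^2}\int_{\S^1}G(\theta)e^{-i\xi\scal{x,\theta}}\d\theta\d\mu(x)$. Swapping the two integrals by Fubini --- legitimate because $\mu$ is finite, $\S^1$ has finite measure, and the integrand is bounded by $\norm{G}_\infty$ --- and recognising $\dst\int_{\R^2}e^{-i\xi\scal{x,\theta}}\d\mu(x)=\widehat{\mu}(\xi\theta)$ yields exactly the right-hand side. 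Alternatively, property (i) lets me disintegrate $\nu$ over $\S^1$ with fibres $\pi_\theta*\mu$, so the inner integral $\dst\int_\R e^{-is\xi}\d(\pi_\theta*\mu)(s)$ is the Fourier transform of the push-forward, which by the change-of-variables formula for push-forwards is again $\widehat{\mu}(\xi\theta)$.

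The one genuine obstacle is that \eqref{eq:radon2} is only stated for $g\in\cc_c(\S^1\times\R)$, whereas $g(\theta,s)=G(\theta)e^{-is\xi}$ is not compactly supported in $s$. I would remove this by first noting that $\nu$ is a \emph{finite} measure: since $\norm{\pi_\theta*\mu}\le\abs{\mu}(\R^2)$ for every $\theta$, the disintegration of property (i) gives $\norm{\nu}\le 2\pi\abs{\mu}(\R^2)<\infty$. With both $\nu$ and $\mu$ finite, I can extend \eqref{eq:radon2} from $\cc_c$ to all bounded continuous $g$ by cutting off $e^{-is\xi}$ with functions $\chi_n\in\cc_c(\R)$ equal to $1$ on $[-n,n]$ and passing to the limit by dominated convergence on both sides (the left side dominated by $\norm{G}_\infty$ against $\abs{\nu}$, the right side by the uniform bound $\norm{R^*[g_n]}_\infty\le 2\pi\norm{G}_\infty$ against $\abs{\mu}$). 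This justifies the single substitution on which the whole argument rests; everything else is bookkeeping. In fact the disintegration route sidesteps this subtlety altogether, since there the $s$-integral is already taken against the finite measure $\pi_\theta*\mu$ on $\R$ and no extension of \eqref{eq:radon2} is needed.
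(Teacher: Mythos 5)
Your proof is correct, but note that the paper itself never proves this statement: the Fourier--Slice Theorem is listed among properties of the Radon transform "established" in the cited references \cite{HHK,HQ,BL}, so there is no in-paper argument to compare against. What you have produced is a self-contained verification from the paper's own definitions, and it is sound: testing against $G\in\cc(\S^1)$, unfolding the definition of $\widehat{\nu}_\xi$, applying the defining relation \eqref{eq:radon2} to $g(\theta,s)=G(\theta)e^{-is\xi}$, and using Fubini to recognise $\widehat{\mu}(\xi\theta)$ is exactly the natural route. You also correctly identified the one real subtlety --- that \eqref{eq:radon2} is stated only for $g\in\cc_c(\S^1\times\R)$ while your test function is not compactly supported in $s$ --- and your cutoff-plus-dominated-convergence fix works, granted the finiteness of $\nu$ (which the paper builds into the definition by declaring $\nu\in\mm(\S^1\times\R)$, and which you re-derive from property (i)). Your closing observation is also the right one: the disintegration route through property (i), where the $s$-integral is taken directly against the finite measure $\pi_\theta*\mu$ and one uses the change-of-variables formula for push-forwards, avoids the extension issue entirely and is the cleanest way to see that the Fourier--Slice Theorem is little more than Fubini applied to the identity $\widehat{\pi_\theta*\mu}(\xi)=\widehat{\mu}(\xi\theta)$.
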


\item For $\theta\in\S^1$ and $s\in\R$, let $H_{\theta,s}=\{x\in\R^2\,\scal{x,\theta}<s\}$. Then
for every $\ffi\in\cc^\infty_c(\R)$,
$$
\scal{R[\mu](\theta,\cdot),\ffi}=-\int_\R\mu(H_{\theta,s})\ffi'(s)\,\mbox{d}s.
$$
In other words, $R[\mu](\theta,\cdot)$ is the derivative in the sense of distributions of the 
function (of bounded variation) $s\to \mu(H_{\theta,s})$.
\end{enumerate}

Now, according to the Fourier-Slice Theorem, $(\Gamma,\ell_{\theta_1}\cup\ell_{\theta_2})$
is a Heisenberg Uniqueness Pair if and only if 
the only $\mu\in\aa\cc(\Gamma)$ such that $R[\mu](\theta_1,\cdot)=R[\mu](\theta_2,\cdot)=0$
is $\mu=0$.

This immediately leads to the following refinements of the celebrated Cram\'er-Wold Theorem
\cite{CW}:

\medskip

\noindent{\bf Restricted Cram\'er-Wold Theorem} {\sl Let $\Gamma$ be any of the curves mentioned in the main theorem
and let $E\subset(-\pi/2,\pi/2)\times(-\pi/2,\pi/2)$ be the corresponding set of positive measure.
Let $(\theta_1,\theta_2)\in E$. Let $\mu\in\aa\cc(\Gamma)$, if 
$R[\mu](\theta_1,\cdot)=R[\mu](\theta_2,\cdot)=0$ then $\mu=0$.}

\medskip
This seems to be the first instance of a positive result in the Cram\'er-Wold theorem
for finitely many angles. For infinitely many angles, we refer to \cite{BMR}.

It seems reasonable to say that in those cases $(\Gamma,\{\theta_1,\theta_2\})$ form a
\emph{Cram\'er-Wold Pair}. 
This leads to an other notion of Strong Heisenberg Uniqueness Pair, that we
call a \emph{Strong Cram\'er-Wold Pair}:
if $\Gamma\subset\R^2$ is a curve and $\Lambda\subset\S^1$ then $(\Gamma,\Lambda)$ 
is a Strong Cram\'er-Wold Pair if for $\mu_n,\mu\in\aa\cc(\Gamma)$,
$\mu_n\to\mu$ weakly if and only if $R[\mu_n](\theta,\cdot)\to R[\mu](\theta,\cdot)$
weakly for every $\theta\in\Lambda$. Here we follow the probabilist's tradition
to say that $\mu_n\to\mu$ weakly if $\int\ffi\,\mbox{d}\mu_n\to\int\ffi\,\mbox{d}\mu$
for every bounded continuous function $\ffi$, that is, if $\mu_n\to\mu$ in the weak-$*$
topology. A part in the case of the full Cram\'er-Wold Theorem ({\it see} \cite{HQ})
not much seems to be known in this direction.  

\medskip                                                 

The remaining of the paper is organized as follows. The following section is devoted to the 
technical lemmas we will need. In particular, Section \ref{sec:general} contains the three technical
lemmas linking Heisenberg Uniqueness Pairs and properties of the dynamical system generated by $\Phi$.
Section \ref{sec:wandering} is then devoted to cases where the dynamical system has many wandering sets,
in particular establishing \ref{mt2}) to \ref{mt4}) of the Main theorem in four consecutive subsections.
The last section is devoted to closed curves when the map $\Phi$ has a rotation number.

\section{Technical Lemmas}

\subsection{Notation and key lemma}

Throughout this paper, $I$ will be a finite union of disjoint intervals and 
$\Gamma=\{\gamma(s),s\in I\}$ will be a curve in the
plane parametrized by a function $\gamma\,:I\to\R^2$ that is assumed to be piecewise $\cc^k$-smooth ($k\geq1$) and one-to-one 
(except possibly for the end points of $I$). 

For $\theta\in \S^1$ the unit circle of $\R^2$ denote by $\theta^\perp$ the vector in $\S^1$ directly orthogonal to 
$\theta$. We will use the common abuse of notation by identifying $\theta$ with its the angle with the horizontal
axes, $\theta=(\cos\theta,\sin\theta)$ so that $\theta^\perp=(-\sin\theta,\cos\theta)$. Let $\ell_\theta=\{t\theta,t\in\R\}$ be the line spanned by 
$\theta$ and define $\pi_\theta x=\scal{x,\theta}$ so that $x\to\pi_\theta(x)\theta$ is the
orthogonal projection of $x$ on $\ell_\theta$.

Given $\mu\in\aa\cc(\Gamma)$ {\it i.e.} a measure that is absolutely continuous with respect to arc length on $\Gamma$ 
we write $\mu(s)=g_\mu(s)\norm{\gamma'(s)}\,\mbox{d}s=f_\mu(s)\,\mbox{d}s$, with $f_\mu\in L^1(I)$.

We are now in position to prove the following simple but key lemma:

\begin{lemma}\label{lem:fund}
Assume that $\Gamma$, $\theta$ are such that there exists a finite partition of $I=\bigcup_{k=1}^NI_k$
of intervals that are disjoint (up to the endpoints) such that $s\to \pi_\theta\gamma(s)$ is one-to-one on each $I_k$.

Let $\mu\in\aa\cc(\Gamma)$.
Then $\hat\mu(\xi)=0$ for $\xi\in\ell_\theta$ if and only if, for almost every $\zeta\in\R$
\begin{equation}
\label{eq:fund}
\sum_{s\in\pi_\theta\gamma^{-1}(\zeta)}\frac{f_\mu(s)}{\pi_\theta \gamma'(s)}=0.
\end{equation}
\end{lemma}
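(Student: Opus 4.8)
The plan is to compute $\widehat{\mu}$ restricted to $\ell_\theta$ explicitly as a Fourier transform of a function on $\R$, and then to recognize the vanishing condition as the vanishing of that Fourier transform. The starting point is the parametrization $\mu = f_\mu(s)\,\mbox{d}s$ on $I$, so that for $\xi = \xi_0\theta \in \ell_\theta$ (writing $\xi_0\in\R$ for the scalar coordinate along the line),
\[
\widehat{\mu}(\xi_0\theta) = \int_{\R^2} e^{-i\scal{\xi_0\theta,w}}\,\mbox{d}\mu(w) = \int_I e^{-i\xi_0\scal{\theta,\gamma(s)}} f_\mu(s)\,\mbox{d}s = \int_I e^{-i\xi_0\pi_\theta\gamma(s)} f_\mu(s)\,\mbox{d}s.
\]
The key observation is that the entire integrand depends on $s$ only through the scalar quantity $\zeta = \pi_\theta\gamma(s)$, apart from the density $f_\mu(s)$. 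First I would therefore push the measure $f_\mu(s)\,\mbox{d}s$ forward along the map $s\mapsto\pi_\theta\gamma(s)$ to obtain a measure on $\R$ in the variable $\zeta$; the Fourier transform on $\ell_\theta$ is then exactly the one-dimensional Fourier transform of this pushed-forward object.

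The mechanism that produces the sum in \eqref{eq:fund} is the change of variables $\zeta = \pi_\theta\gamma(s)$ performed piece by piece on each $I_k$ from the hypothesis. On each $I_k$ the map $s\mapsto\pi_\theta\gamma(s)$ is one-to-one, hence (being a piecewise $\cc^1$ function that is injective) admits a well-defined inverse $s_k(\zeta)$ on its image $J_k=\pi_\theta\gamma(I_k)$, with Jacobian $\mbox{d}s = \mbox{d}\zeta/\abs{\pi_\theta\gamma'(s_k(\zeta))}$. Splitting the integral over $I = \bigcup_k I_k$ and applying this substitution on each piece gives
\[
\widehat{\mu}(\xi_0\theta) = \sum_{k=1}^N \int_{J_k} e^{-i\xi_0\zeta}\,\frac{f_\mu(s_k(\zeta))}{\abs{\pi_\theta\gamma'(s_k(\zeta))}}\,\mbox{d}\zeta = \int_\R e^{-i\xi_0\zeta}\,F(\zeta)\,\mbox{d}\zeta,
\]
where $F(\zeta) = \sum_{s:\,\pi_\theta\gamma(s)=\zeta} f_\mu(s)/\abs{\pi_\theta\gamma'(s)}$ collects the contributions of all preimages of $\zeta$ (a finite sum, with at most $N$ terms). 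The absolute value in the Jacobian combines with the orientation of each piece; one should check that after accounting for signs the resulting density is precisely $\sum_{s} f_\mu(s)/\pi_\theta\gamma'(s)$ as written in \eqref{eq:fund}, the signed form being the natural one once the pushforward is interpreted correctly. Since $f_\mu\in L^1(I)$ and the substitution is measure-preserving, $F\in L^1(\R)$, so $\widehat{\mu}\big|_{\ell_\theta}$ is genuinely the Fourier transform of the $L^1$ function $F$.

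The conclusion is then immediate from injectivity of the Fourier transform on $L^1(\R)$: the scalar $\xi_0$ ranges over all of $\R$ as $\xi$ ranges over $\ell_\theta$, so $\widehat{\mu}$ vanishes identically on $\ell_\theta$ if and only if $\widehat{F}\equiv 0$, which by uniqueness for the Fourier transform holds if and only if $F = 0$ almost everywhere, i.e.\ \eqref{eq:fund} holds for a.e.\ $\zeta$. The main obstacle I anticipate is not any single hard estimate but the careful bookkeeping at the two delicate points: first, verifying that $F$ is measurable and integrable and that the piecewise change of variables is valid under only piecewise $\cc^1$ regularity (using that $\pi_\theta\gamma'$ may vanish only on a negligible set, or otherwise that the hypothesis of one-to-one-ness on each $I_k$ forces the derivative to be nonzero a.e.); and second, reconciling the signs, namely showing that the absolute-value Jacobian from the honest change of variables rearranges into the signed denominator $\pi_\theta\gamma'(s)$ appearing in \eqref{eq:fund}, which requires tracking the orientation of $\pi_\theta\gamma$ on each $I_k$.
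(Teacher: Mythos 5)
Your proposal follows essentially the same route as the paper's proof: write $\mu=f_\mu\,\mathrm{d}s$, split $I$ into the pieces $I_k$ on which $\pi_\theta\gamma$ is injective, change variables $\zeta=\pi_\theta\gamma(s)$ on each piece so that $\widehat{\mu}\big|_{\ell_\theta}$ becomes the one-dimensional Fourier transform of an $L^1$ density on $\R$, and conclude by injectivity of the Fourier transform on $L^1(\R)$. In substance you have reproduced the paper's argument.

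The one genuine divergence is the sign question you raised and then deferred, and you should know that the deferred step cannot be completed as you stated it. The honest Jacobian gives, as you wrote, $F(\zeta)=\sum_{s\in\pi_\theta\gamma^{-1}(\zeta)}f_\mu(s)/\abs{\pi_\theta\gamma'(s)}$, and this is \emph{not} a rearrangement of the signed expression in \eqref{eq:fund}: at a value $\zeta$ with two preimages lying on branches where $\pi_\theta\gamma$ is respectively decreasing and increasing, the derivatives have opposite signs, so the conditions $\sum_s f_\mu(s)/\abs{\pi_\theta\gamma'(s)}=0$ and $\sum_s f_\mu(s)/\pi_\theta\gamma'(s)=0$ are different constraints (one says the two quantities $f_\mu(s_\pm)/\pi_\theta\gamma'(s_\pm)$ are equal, the other that they are opposite). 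The correct condition is yours, with absolute values: for the unit circle $\gamma(s)=(\cos 2\pi s,\sin 2\pi s)$ and $\theta=0$, a direct computation (substitute $s\mapsto 1-s$ in the integral) shows that $\widehat{\mu}=0$ on $\ell_0$ if and only if $f_\mu(-s)=-f_\mu(s)$, which is what the unsigned form yields, whereas the signed form \eqref{eq:fund} would yield $f_\mu(-s)=f_\mu(s)$. This is in fact a sign slip in the paper itself: its proof performs the change of variables keeping the signed derivative but then silently treats the oriented image intervals as sets, which is exactly the bookkeeping hazard you anticipated; and the paper's later applications are consistent with the absolute-value version --- Section \ref{sec:ellipse} uses the odd-symmetry condition for the circle, and \eqref{eq:symhyp1} quotes \eqref{eq:symf} with a $+$ sign rather than the $-$ sign printed in Corollary \ref{cor:2}. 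So: your computation is right, your closing claim that the absolute values "rearrange into the signed denominator" is not, and the discrepancy is a defect of the printed statement rather than of your method.
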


\begin{proof} Note that, for $\theta$ fixed,  $(\pi_\theta\gamma)':=\partial_s\pi_\theta\gamma=\pi_\theta\gamma'$. Then
\begin{eqnarray*}
\hat\mu(t\theta)&=&\int_I f_\mu(s)e^{-i t\scal{\gamma(s),\theta}}\,\mbox{d}s\\
&=&\sum_{k=1}^N\int_{I_k} f_\mu(s)e^{-i t\pi_\theta\gamma(s)}\,\mbox{d}s\\
&=&\sum_{k=1}^N\int_{\pi_\theta\gamma^{-1}(I_k)}f_\mu\bigl(\pi_\theta\gamma^{-1}(\zeta)\bigr)
e^{-i t\zeta}
\frac{\mbox{d}\zeta}{\pi_\theta\gamma'\bigl(\pi_\theta\gamma^{-1}(\zeta)\bigr)}
\end{eqnarray*}
with the change of variable $s=\pi_\theta\gamma^{-1}(\zeta)$ on each $I_k$.
It follows that
\begin{eqnarray*}
\hat\mu(t\theta)&=&\int_\R 
\sum_{k=1}^N\mbox{1}_{\pi_\theta\gamma^{-1}(I_k)}(\zeta)\frac{f_\mu\bigl(\pi_\theta\gamma^{-1}(\zeta)\bigr)}
{\pi_\theta\gamma'\bigl(\pi_\theta\gamma^{-1}(\zeta)\bigr)}e^{-i t\zeta}\,\mbox{d}\zeta\\
&=&\int_\R\sum_{s\in\pi_\theta\gamma^{-1}(\zeta)}\frac{f_\mu(s)}{\pi_\theta \gamma'(s)}
e^{-i t\zeta}\,\mbox{d}\zeta.
\end{eqnarray*}
This is now an ordinary Fourier transform so that $\hat\mu(t\theta)=0$ for every $t$ if and only if
\eqref{eq:fund} is satisfied.
\end{proof}

\begin{remark}\ \\
--- If $\gamma$ is contained in a half place $\{\scal{x,\theta^\perp}\geq\alpha\}$
or $\{\scal{x,\theta^\perp}\leq\alpha\}$, then it is enough to assume that $\widehat{\mu}(t\theta)=0$
for $t\in E$ a set of finite positive measure for \eqref{eq:fund} to hold.

\smallskip

This follows immediately from the previous proof and the well known fact ({\it see e.g.} \cite[Page 36]{HJ}) that
if $f\in L^1(\R)$ is such that $\supp f\subset[0,+\infty)$ and if
$$
\int_\R\frac{\log|\widehat{f}(\xi)|}{1+|\xi|^2}\,\mbox{d}\xi=-\infty
$$
(in particular if $\widehat{f}$ is compactly supported) then $f=0$.

\noindent--- Further, if $\Gamma$ is contained in a strip $\{-\alpha\leq\scal{x,\theta^\perp}\leq\alpha\}$,
using the sampling theorem, we may further restrict $E$ to be a discrete set of density $\geq\frac{\alpha}{2\pi}$.
\end{remark}

From now on, we will restrict our attention to curves for which $(\pi_\theta\gamma)^{-1}(\zeta)$ contains at most two points.
More precisely, the following is a direct reformulation of Lemma \ref{lem:fund}:

\begin{corollary}\label{cor:2}
Let $\gamma\,: I\to\R^2$ be a piecewise smooth function and $\theta\in[0,2\pi)$. Assume that we may split
$I=I_0\cup I_-\cup I_+$ in such a way that

\begin{enumerate}
\renewcommand{\theenumi}{\roman{enumi}}
\item $\pi_\theta\gamma$ is one-to-one on each interval $I_0$, $I_+$, $I_-$.

\item let $\sigma\in I$ and $\zeta=\pi_\theta\gamma(\sigma)$ and consider the equation $\pi_\theta\gamma(s)=\zeta$. Then 

-- if $\sigma\in I_0$ this equation has as unique solution $s=\sigma$;

-- if $\sigma\in I_-$ (resp. $I_+$) this equation has two solutions $\sigma_\pm$ with $\sigma_-=\sigma\in I_-$ and $\sigma_+\in I_+$
(resp. $\sigma_+=\sigma\in I_+$ and $\sigma_-\in I_-$). In this case, we denote $\pi_\theta\gamma^{-1}_\pm(\zeta)=\sigma_\pm$
\end{enumerate}

Let $\mu\in\aa\cc(\Gamma)$.
Then $\hat\mu(\xi)=0$ for $\xi\in\ell_\theta$ if and only if,
\begin{enumerate}
\renewcommand{\theenumi}{\roman{enumi}}
\item $f_\mu=0$ on $I_0$

\item for every $s_-\in I_-$, $s_+\in I_+$, with $\pi_\theta\gamma(s_-)=\pi_\theta\gamma(s_+)$
that is $s_+= \pi_\theta\gamma^{-1}_+\bigl(\pi_\theta\gamma(s_-)\bigr)$ and
$s_-= \pi_\theta\gamma^{-1}_-\bigl(\pi_\theta\gamma(s_+)\bigr)$,
\begin{equation}
\label{eq:symf}
\frac{f_\mu(s_+)}{\pi_\theta\gamma'(s_+)}
=-\frac{f_\mu(s_-)}{\pi_\theta\gamma'(s_-)}.
\end{equation}
\end{enumerate}
Moreover, if $\alpha_+,\beta_+\in I_+$ and $\alpha_-,\beta_-\in I_-$ are such that
$\pi_\theta\gamma(\alpha_+)=\pi_\theta\gamma(\alpha_-)$, $\pi_\theta\gamma(\beta_+)=\pi_\theta\gamma(\beta_-)$
then
\begin{equation}
\label{eq:symintsign}
\int_{\alpha_-}^{\beta_-} f_\mu(s_-)\,\mbox{d}s_-=-
\int_{\alpha_+}^{\beta_+} f_\mu(s_+)\,\mbox{d}s_+.
\end{equation}
and
\begin{equation}
\label{eq:symint}
\int_{[\alpha_-,\beta_-]} |f_\mu(s_-)|\,\mbox{d}s_-=
\int_{[\alpha_+,\beta_+]} |f_\mu(s_+)|\,\mbox{d}s_+.
\end{equation}
\end{corollary}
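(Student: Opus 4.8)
The statement is advertised as a direct reformulation of Lemma~\ref{lem:fund}, so the plan is to read off the first equivalence from \eqref{eq:fund} and then obtain the two integral identities by a single change of variables.

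First I would specialize \eqref{eq:fund} to the present partition. For almost every $\zeta\in\R$ the fibre $\pi_\theta\gamma^{-1}(\zeta)$ has one of two shapes. If $\zeta=\pi_\theta\gamma(\sigma)$ with $\sigma\in I_0$, then by hypothesis~(ii) this is the only preimage, so \eqref{eq:fund} reduces to $f_\mu(\sigma)/\pi_\theta\gamma'(\sigma)=0$, that is $f_\mu(\sigma)=0$; letting $\sigma$ range over $I_0$ gives condition~(i). If instead $\zeta$ lies in the common range $\pi_\theta\gamma(I_-)=\pi_\theta\gamma(I_+)$, then, again by hypothesis~(ii), no solution can fall in $I_0$, so the fibre consists of exactly the two points $\sigma_-=\pi_\theta\gamma_-^{-1}(\zeta)$ and $\sigma_+=\pi_\theta\gamma_+^{-1}(\zeta)$, and \eqref{eq:fund} becomes
$$
\frac{f_\mu(\sigma_-)}{\pi_\theta\gamma'(\sigma_-)}+\frac{f_\mu(\sigma_+)}{\pi_\theta\gamma'(\sigma_+)}=0,
$$
which is exactly \eqref{eq:symf}. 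Since \eqref{eq:fund} is required only for almost every $\zeta$, these equalities need only hold almost everywhere, matching the statement. This settles the ``if and only if'' part.

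Next I would prove the two additional identities by pushing both integrals forward under $\zeta=\pi_\theta\gamma(s)$. Writing $p=\pi_\theta\gamma$ and using that $p$ is one-to-one (hence monotone) on each of $I_-$ and $I_+$, the substitution $\d\zeta=p'(s)\,\d s$ turns
$$
\int_{\alpha_-}^{\beta_-}f_\mu(s_-)\,\d s_-=\int_{\zeta_\alpha}^{\zeta_\beta}\frac{f_\mu\bigl(p_-^{-1}(\zeta)\bigr)}{p'\bigl(p_-^{-1}(\zeta)\bigr)}\,\d\zeta,
\qquad
\int_{\alpha_+}^{\beta_+}f_\mu(s_+)\,\d s_+=\int_{\zeta_\alpha}^{\zeta_\beta}\frac{f_\mu\bigl(p_+^{-1}(\zeta)\bigr)}{p'\bigl(p_+^{-1}(\zeta)\bigr)}\,\d\zeta,
$$
where $\zeta_\alpha=p(\alpha_-)=p(\alpha_+)$ and $\zeta_\beta=p(\beta_-)=p(\beta_+)$ by the compatibility hypothesis. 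The crucial point is that both substitutions produce the \emph{same} limits $\zeta_\alpha,\zeta_\beta$, so I may compare integrands pointwise: \eqref{eq:symf} says they are opposite, whence \eqref{eq:symintsign} follows at once. For \eqref{eq:symint} I would run the same change of variables on the unordered integrals of $|f_\mu|$, where $|p'|\,|\d s|=|\d\zeta|$ so that orientation plays no role, and then invoke the modulus version $\bigl|f_\mu(\sigma_+)/p'(\sigma_+)\bigr|=\bigl|f_\mu(\sigma_-)/p'(\sigma_-)\bigr|$ of \eqref{eq:symf}.

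The only delicate point is the bookkeeping of signs and orientations: since $p$ may be increasing on one branch and decreasing on the other, the factor $p'$ carries a sign that must be tracked, and this is precisely what distinguishes the signed identity \eqref{eq:symintsign} (minus sign, from comparing the signed integrands) from the modulus identity \eqref{eq:symint} (no sign, because $|p'|$ is used and the interval is taken unordered). Keeping the limits pinned at the \emph{common} values $\zeta_\alpha,\zeta_\beta$ --- rather than at the images of the individual endpoints --- is what makes the pointwise comparison legitimate. A minor technical caveat is that $p'$ could vanish at isolated points even where $p$ is one-to-one, but this affects only a null set of $\zeta$ and does not disturb the change of variables, exactly as in the proof of Lemma~\ref{lem:fund}.
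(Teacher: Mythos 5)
Your proof is correct and is essentially the paper's own argument: the paper also treats the equivalence as a direct specialization of Lemma \ref{lem:fund} to fibres consisting of one point (giving $f_\mu=0$ on $I_0$) or two points (giving \eqref{eq:symf}), and it derives \eqref{eq:symintsign}--\eqref{eq:symint} from \eqref{eq:symf} by the change of variable $s_+=\pi_\theta\gamma^{-1}_+\bigl(\pi_\theta\gamma(s_-)\bigr)$ in the second integral. Your two pushforwards onto the common $\zeta$-interval are just this single substitution written in two steps, so there is no substantive difference.
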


\begin{figure}[!ht]
\begin{center}
\setlength{\unitlength}{0.5cm}
\begin{picture}(12,8)
\qbezier[400](1,7)(4,-5)(12,9)
\dottedline{0.2}(1,7)(13,7)
\put(2.5,6.3){${\scriptstyle s\in I_-}$}
\put(7,6.3){$\scriptstyle s\in I_+$}
\put(11.5,7.5){$\scriptstyle s\in I_0$}
\put(0,1.4){\vector(1,0){12}}
\put(4.9,1){\vector(0,1){8}}
\put(12,1.1){$\theta^\perp$}
\put(5.2,8.5){$\theta$}
\dottedline{0.2}(1.7,4)(9,4)
\put(2.7,4.1){${\scriptstyle \pi_\theta\gamma(s_-)=\pi_\theta\gamma(s_+)}$}
\put(0,3.8){$\gamma(s_-)$}
\put(0,0.2){${\scriptstyle s_-=\pi_\theta\gamma^{-1}_-\bigl(\pi_\theta\gamma(s)\bigr)}$}
\put(8.1,0.2){${\scriptstyle s_+=\pi_\theta\gamma^{-1}_+\bigl(\pi_\theta\gamma(s)\bigr)}$}
\put(8.8,3.8){$\gamma(s_+)$}
\end{picture}
\caption{The notation of Corollary \ref{cor:2}}
\label{fig:cor}
\end{center}
\end{figure}
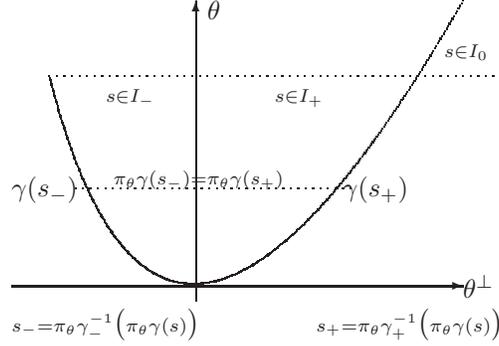

Note that \eqref{eq:symintsign}-\eqref{eq:symint} follows directly from \eqref{eq:symf} if we change variable 
$s_+=\pi_\theta\gamma^{-1}_-\bigl(\pi_\theta\gamma(s_-)\bigr)$
in the second integral.

\begin{notation}
If $\gamma, \theta$ satisfy the hypothesis of Corollary \ref{cor:2} we may define the map 
$$
\Phi_\theta\,:\begin{matrix}I_-\cup I_+\to I_-\cup I_+\\
\Phi_\theta(s_\pm)=s_\mp\end{matrix}.
$$
This map has a nice geometric interpretation:
Consider a point $\gamma(s)$ with $s\in I$ and draw a line orthogonal to $\theta$ starting at $\gamma(s)$.
This line will intersect $\Gamma$ again in $\gamma\bigl(\Phi_\theta(s)\bigr)$.
\end{notation}

Let us now give a first application:

\begin{proposition}
\label{prop:1line}
Let $\psi\,:\R\to\R$ be a continuous piecewise $\cc^1$  function, such that $\psi$ is concave on $\R^-$ and convex on $\R^+$
and that $\psi$ has a left and a right derivative in $0$, $\psi'(0^\pm)$. Let 
$$
\theta_0=\begin{cases}\dst\frac{\pi}{2}&\mbox{if }\psi'(0^-)>0\mbox{ and }\psi'(0^+)>0\\
-\mathrm{arg\,cotan}\min\bigl(\psi'(0^-),\psi'(0^+)\bigr)&\mbox{otherwise}
\end{cases}.
$$
Let $\Gamma=\bigl\{\bigl(s,\psi(s)\bigr),s\in\R\bigr\}$ be the graph of $\psi$. Then there is a $\theta_0$
such that, if $0\leq \theta\leq \theta_0$, $(\Gamma,\ell_\theta)$ is a Heisenberg Uniqueness Pair.
\end{proposition}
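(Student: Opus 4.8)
The plan is to reduce the whole statement to the assertion that, for every $\theta\in[0,\theta_0]$, the projection $s\mapsto\pi_\theta\gamma(s)$ is injective on all of $\R$; once this is known, Lemma \ref{lem:fund} (equivalently Corollary \ref{cor:2} in the degenerate case $I_-=I_+=\varnothing$, $I=I_0$) finishes the argument in one line. Writing $\gamma(s)=\bigl(s,\psi(s)\bigr)$ we have
\[
\pi_\theta\gamma(s)=s\cos\theta+\psi(s)\sin\theta=:p(s),
\]
and if $p$ is a strictly monotone bijection of $\R$ onto an interval $J$, the change of variable $\zeta=p(s)$ turns $\hat\mu(t\theta)=\int_\R f_\mu(s)e^{-itp(s)}\,\d s$ into the ordinary Fourier transform of the $L^1$ density $\zeta\mapsto f_\mu\bigl(p^{-1}(\zeta)\bigr)/p'\bigl(p^{-1}(\zeta)\bigr)\mathbf 1_J(\zeta)$. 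The hypothesis $\widehat\mu\big|_{\ell_\theta}=0$ then forces this density to vanish a.e., hence $f_\mu=0$ and $\mu=0$, so $(\Gamma,\ell_\theta)$ is a HUP.

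So everything reduces to deciding for which $\theta$ the map $p$ is strictly monotone, and here I would exploit the sign structure of $p'(s)=\cos\theta+\psi'(s)\sin\theta$ dictated by the convexity hypotheses. For $\theta\in[0,\pi/2]$ one has $\sin\theta\ge0$, so $p'$ is smallest where $\psi'$ is smallest. Since $\psi$ is concave on $\R^-$ its derivative decreases there and attains its infimum as $s\to0^-$, while since $\psi$ is convex on $\R^+$ its derivative increases there and attains its infimum as $s\to0^+$. Consequently
\[
\inf_{s\in\R}p'(s)=\cos\theta+m\sin\theta,\qquad m:=\min\bigl(\psi'(0^-),\psi'(0^+)\bigr),
\]
and $p$ is nondecreasing precisely when $\cos\theta+m\sin\theta\ge0$.

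It then remains to rewrite this inequality as $\theta\le\theta_0$. If $m>0$ (both one-sided derivatives positive), the quantity $\cos\theta+m\sin\theta$ stays positive throughout $[0,\pi/2]$, giving $\theta_0=\pi/2$; otherwise, dividing by $\sin\theta>0$ the condition reads $\cotan\theta\ge-m$, and since $\cotan$ is decreasing this is exactly $\theta\le\argcotan(-m)$, which is the value $\theta_0$ recorded in the statement. For $0\le\theta<\theta_0$ the infimum above is \emph{strictly} positive, so $p'>0$ everywhere and $p$ is a genuine strictly increasing bijection, which is exactly what the reduction of the first paragraph requires.

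The single point that needs extra care—and which I expect to be the only real obstacle—is the boundary angle $\theta=\theta_0$, where $\inf_s p'(s)=0$. Generically the minimal slope $m$ is attained by $\psi'$ only in the limit at $0$, so $p'>0$ off $s=0$ and $p$ is still injective; but if $\psi$ happens to be affine on a one-sided neighbourhood of $0$ with slope exactly $m$, then $p'$ vanishes on an interval, $p$ is constant there, and injectivity (hence the finiteness of the fibres $\pi_\theta\gamma^{-1}(\zeta)$) fails. In that degenerate situation I would either assume strict convexity/concavity near $0$, or treat it directly: a flat piece of $p$ contributes $e^{-it\zeta}\int f_\mu$ to $\hat\mu(t\theta)$, and one argues separately that the mass carried by that piece must vanish. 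Apart from this endpoint subtlety the proof is entirely elementary, the content being the computation of $\inf_s p'$ from the concave/convex structure of $\psi$.
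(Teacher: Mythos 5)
Your argument is correct for $0\le\theta<\theta_0$ and is essentially the paper's own proof: concavity on $\R^-$ and convexity on $\R^+$ give $\chi'(s)=\cos\theta+\psi'(s)\sin\theta\ge\cos\theta+m\sin\theta>0$ with $m=\min\bigl(\psi'(0^-),\psi'(0^+)\bigr)$, so the projection is strictly increasing and Corollary~\ref{cor:2} applies with $I_0=\R$ (your first paragraph merely rederives Lemma~\ref{lem:fund} in the injective case, and your $\argcotan(-m)$ is the correct reading of the paper's $-\argcotan m$, the two being equal modulo the period $\pi$ of $\cotan$). In fact your endpoint discussion is more careful than the paper's, whose proof establishes $\chi'>0$ only for $\theta<\theta_0$ even though the statement asserts the property for $\theta\le\theta_0$. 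Be aware, though, that in the degenerate case you isolate (an affine piece of slope $m$ abutting $0$) your fallback argument cannot be completed: Riemann--Lebesgue yields only that $f_\mu$ vanishes off the flat piece and has zero integral on it, and indeed a mean-zero density supported on the flat piece gives a nonzero $\mu\in\aa\cc(\Gamma)$ with $\widehat{\mu}=0$ on $\ell_{\theta_0}$, so at $\theta=\theta_0$ the conclusion is genuinely false without your strict convexity/concavity assumption (or the restriction to $\theta<\theta_0$), which is therefore the right fix.
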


\begin{proof}
Let $\gamma(s)=\bigl(s,\psi(s)\bigr)$, $\theta\in(-\pi/2,\pi/2]$. Then $\chi(s):=\pi_\theta\gamma(s)=s\cos\theta+\psi(s)\sin\theta$.
We have to show that $\chi$ is one-to-one, but $\chi'(s)=\cos\theta+\psi'(s)\sin\theta$.
As $\sin\theta\geq0$, the convexity hypothesis on $\psi$ implies that $\psi'(s)\geq \psi'(0^-)$ for $s<0$
thus $\chi'(s)\geq \cos\theta+\psi'(0^-)\sin\theta$ while for $s>0$
$\chi'(s)\geq \cos\theta+\psi'(0^+)\sin\theta$ for $s>0$. Thus, if $0\leq \theta<\theta_0$, $\chi'(s)>0$ for all $s$
and $\pi_\theta\gamma=\chi$ is one-to-one.

In the notation of Corollary \ref{cor:2}, $I_0=\R$. The result follows.
\end{proof}

\begin{example} Let $\alpha>0$, $\Gamma=\{(s,\mbox{sign}(s)|s|^\alpha),s\in\R\}$.
If $\alpha\geq 1$ and $\theta\in[0,\pi/2]$ then $(\Gamma,\ell_\theta)$
is a Heisenberg Uniqueness Pair. 

Using invariance property (Inv2) and $T(x,y)=(y,x)$ we also get that $(\Gamma,\ell_\theta)$
is a Heisenberg Uniqueness Pair when $0<\alpha\leq 1$ and $\theta\in [-\pi/2,0]$, 
\end{example}

\subsection{The regularity of $\Phi_\theta$}
\label{rem:smooth}

The aim of this section is to establish the regularity of the map $\Phi_\theta$. This is only needed when
we investigate closed curves.

We will fix an integer $k\geq 2$. Let $\gamma$ be a $\cc^k$-smooth mapping $I\to\R^2$
and $\Gamma$ be the corresponding curve. We will assume that $\gamma'$ does not vanish.
Assume that for some $\theta_0$, the conditions of Corollary \ref{cor:2} are satisfied.

Note that $\Phi_\theta$ is defined implicitly as follows: let
$F(s,t,\theta):=\pi_\theta\gamma(s)-\pi_\theta\gamma(t)$ then
$$
\left\{\begin{matrix}\Phi_{\theta_0}(s_\pm^0)=s_\mp^0\\
F(s,\Phi_\theta(s),\theta)=0\end{matrix}\right.
$$
provided we know $\Phi_{\theta_0}(s^0_\pm)$ for some $s_\pm^0\in I_\pm$. To simplify notation, we will only consider the case $\Phi_{\theta_0}(s_-^0)=s_+^0$.
Note that $\dst\frac{\partial F}{\partial t}=-\pi_\theta\gamma'(t)$ and $\dst\frac{\partial F}{\partial \theta}=F(s,t,\theta^\perp)$.

Using the Implicit Function Theorem, we deduce that

--- if $\pi_\theta\gamma'(s_+^0)\not=0$ then, for fixed $\theta=\theta_0$, $\Phi_\theta$ is well defined and of class $\cc^k$ in a neighborhood of $s_-^0$
and
\begin{equation}
\label{eq:diffphis}
\frac{\partial \Phi_{\theta_0}}{\partial s}(s_-)=\frac{\pi_{\theta_0}\gamma'(s_-)}{\pi_{\theta_0}\gamma'(s_+)}.
\end{equation}

--- if $\gamma'(s_-^0)\not=\gamma'(s_+^0)$ then $F(s_0,\Phi_{\theta_0}(s_0),\theta_0^\perp)\not=0$ thus, 
for $s_-^0$ fixed, there is a neighborhood of $\theta_0$ on which
$\Phi_\theta(s_-^0)$ is well defined and of class $\cc^\infty$ in $\theta$ and 
\begin{equation}
\label{eq:diffphitheta}
\frac{\partial \Phi_\theta}{\partial \theta}(s_-^0)=-\frac{\pi_\theta\gamma'(s_-^0)}{F(s_-^0,\Phi_\theta(s_-^0),\theta^\perp)}.
\end{equation}

--- if both conditions are satisfied, then $\Phi_\theta$ is defined and of class $\cc^1$ in the variables $(s_-,\theta)$ and
the derivatives are given by \eqref{eq:diffphis}-\eqref{eq:diffphitheta}.


Let us now assume that the curvature of $\Gamma$ does not vanish around $s_+^0$, thus if $\pi_\theta\gamma'(s_+^0)=0$,
$\pi_\theta\gamma''(s_+^0)\not=0$. First, let $s(\theta)$ be defined by $s(\theta_0)=s_+^0$ and
$\pi_\theta\gamma'\bigl(s(\theta)\bigr)=0$. From the Implicit Function Theorem, $s(\theta)$ is well defined and of class $\cc^k$
in a neighborhood of $\theta_0$ with $s'(\theta)=-\dst\frac{\pi_{\theta^\perp}\gamma'(s)}{\pi_\theta\gamma''(s)}$.

Further, assume that $s_-^0=s_+^0:=s^0$ that $I^0_-=(a,s^0)$ and $I_+^0=(s^0,b)$ then $\Phi_\theta(s)\,:\bigl(a,s(\theta)\bigr)\to \bigl(s(\theta),b\bigr)$.
Further,
$$
\gamma\bigl(s(\theta)+s\bigr)=\gamma\bigl(s(\theta)\bigr)+s\pi_{\theta^\perp}\gamma'\bigl(s(\theta)\bigr)
+\frac{s^2}{2}\ent{\pi_{\theta^\perp}\gamma"\bigl(s(\theta)\bigr)+\pi_{\theta}\gamma"\bigl(s(\theta)\bigr)}+o(s^2)
$$
where the $o(s^2)=s^2\chi_\theta(s)$ with $\chi_\theta(s)\to 0$ uniformly in $\theta$ in a compact neighborhood of $\theta_0$
(since $s(\theta)$ is smooth). 
We will now appeal to the following simple lemma. The proof is a classical exercise on Taylor expansions:

\begin{lemma}
\label{lem:reginv}
Let $\gamma\,:V\to\R^2$ be of class $\cc^k$ in a neighborhood $V$ of $0$. Assume that the Taylor expansion of $\gamma$ is of the form
$\gamma(s)=(a_0+a_1s+a_2s^2+\cdots+a_ks^k,b_0+b_2s^2+\cdots+b_ks^k)+o(s^k)$. Then there is a neighborhood $W$ of $0$
such that $\gamma$ is two-to-one on $W$: if $s\in W$, there is exactly one $\ffi(s)\in W$ such that $\ffi(s)\not=s$ and 
$\gamma\bigl(\ffi(s)\bigr)=\gamma(s)$. Moreover, the map $s\to\ffi(s)$ is of class $\cc^{k-1}$ with $\ffi(s)=-s+o(s)$.
\end{lemma}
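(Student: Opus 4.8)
The plan is to find the nontrivial coincidences of $\gamma$ near the diagonal by dividing out the trivial branch $t=s$. Writing $\gamma=(\gamma_1,\gamma_2)$ and using the fundamental theorem of calculus, factor
\[
\gamma(t)-\gamma(s)=(t-s)\,F(s,t),\qquad F(s,t)=\int_0^1\gamma'\bigl(s+u(t-s)\bigr)\d u\in\R^2 .
\]
Here $F$ is of class $\cc^{k-1}$ on $V\times V$, symmetric in its two arguments, and $F(s,s)=\gamma'(s)$; write $F=(F_1,F_2)$ for its components. For $t\ne s$ the sought equality $\gamma(t)=\gamma(s)$ is therefore equivalent to the \emph{vector} equation $F(s,t)=0$, so producing $\ffi$ amounts to solving $F\bigl(s,\ffi(s)\bigr)=0$ along a branch transverse to the diagonal, and both scalar components $F_1,F_2$ must vanish there at once.

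First I would extract the candidate branch from the degenerate second coordinate. The form $\gamma_2(s)=b_0+b_2s^2+\cdots$ gives $\gamma_2'(0)=0$ and $\gamma_2''(0)=2b_2$, whence $F_2(0,0)=\gamma_2'(0)=0$, while a direct computation yields $\partial_sF_2(0,0)=\partial_tF_2(0,0)=\frac12\gamma_2''(0)=b_2$. Using the nondegeneracy $b_2\ne0$ of the leading quadratic coefficient, the Implicit Function Theorem applied to $F_2$ at $(0,0)$ produces a $\cc^{k-1}$ map $\ffi$ with $F_2\bigl(s,\ffi(s)\bigr)=0$, $\ffi(0)=0$ and $\ffi'(0)=-\partial_sF_2/\partial_tF_2=-1$, so that $\ffi(s)=-s+o(s)$. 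Since $F_2(s,s)=\gamma_2'(s)\ne0$ for $0<|s|$ small, this branch never meets the diagonal away from $0$, giving $\ffi(s)\ne s$; and the symmetry $F(s,t)=F(t,s)$ forces $\ffi$ to be an involution. This already delivers the regularity and the leading term claimed for $\ffi$.

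The main obstacle is to show that this single $\ffi$, built from $F_2$ alone, simultaneously annihilates the first component, i.e. that $\gamma_1\bigl(\ffi(s)\bigr)=\gamma_1(s)$, so that the full equality $\gamma(\ffi(s))=\gamma(s)$ — and not merely the coincidence of the second coordinates — actually holds. This is the crux, because $F_1(0,0)=\gamma_1'(0)=a_1$ need not vanish, so the first scalar equation is compatible with the branch $\ffi(s)=-s+o(s)$ only if the higher Taylor coefficients of $\gamma_1$ and $\gamma_2$ are suitably matched; this is precisely where the special shape of the expansion must be exploited. I would attempt it by substituting $\ffi(s)=-s+o(s)$ into $\gamma_1\bigl(\ffi(s)\bigr)-\gamma_1(s)$ and comparing coefficients up to order $k$, using the $\cc^{k-1}$ regularity of $\ffi$ already obtained to control the remainder. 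I expect this reconciliation of the two scalar equations by one involution, rather than the construction of $\ffi$ or the identification of its derivative, to be the genuinely delicate point.
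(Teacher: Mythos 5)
Up to your final paragraph, your argument is correct and is exactly the standard one: the paper itself gives no proof (it dismisses the lemma as ``a classical exercise on Taylor expansions''), and your Hadamard factorization $\gamma_2(t)-\gamma_2(s)=(t-s)F_2(s,t)$, the computation $\partial_sF_2(0,0)=\partial_tF_2(0,0)=\tfrac12\gamma_2''(0)=b_2$, and the Implicit Function Theorem produce a $\cc^{k-1}$ involution $\ffi$ with $\ffi(0)=0$, $\ffi'(0)=-1$, hence $\ffi(s)=-s+o(s)$, with $\ffi(s)\neq s$ for $s\neq0$ and with local uniqueness of the solution of $\gamma_2(t)=\gamma_2(s)$, $t\neq s$. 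You also correctly identified the implicit hypothesis $b_2\neq0$, which the paper never states but which is supplied in its application by the non-vanishing curvature.

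The step you defer as the ``crux'' --- forcing $F_1\bigl(s,\ffi(s)\bigr)=0$ as well --- is not delicate but impossible: the lemma as literally stated is false. Take $\gamma(s)=(s,s^2)$; the hypotheses hold with $a_1=1$, $b_2=1$, yet $\gamma$ is injective, so no $\ffi(s)\neq s$ with $\gamma\bigl(\ffi(s)\bigr)=\gamma(s)$ exists. Indeed, whenever $a_1\neq0$ the first coordinate is strictly monotone near $0$ and the vector equation has only the trivial solution $t=s$; no matching of higher Taylor coefficients can rescue it, since $\gamma_1\bigl(\ffi(s)\bigr)-\gamma_1(s)=-2a_1s+o(s)$. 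The conclusion ``$\gamma\bigl(\ffi(s)\bigr)=\gamma(s)$'' must be read as equality of the \emph{second} coordinates only: the lemma is really the scalar statement that a $\cc^k$ function with a nondegenerate critical point at $0$ pairs nearby points of equal value by a $\cc^{k-1}$ involution. This is confirmed by how it is used in Section~\ref{rem:smooth}: the map $\Phi_\theta$ is defined by the single scalar equation $\pi_\theta\gamma\bigl(\Phi_\theta(s)\bigr)=\pi_\theta\gamma(s)$ --- the $\theta^\perp$-component, which carries the linear term, is not required to match --- and the only conclusion drawn there is $\Phi_\theta\bigl(s(\theta)+s\bigr)=s(\theta)-s+o(s)$ with $\cc^{k-1}$ regularity. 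So drop your last paragraph, state the result for $\gamma_2$ alone (the first coordinate is idle data), and your first two paragraphs already constitute a complete proof; pursuing the coefficient-matching you propose would lead nowhere.
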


Applying this lemma in the basis $(\theta,\theta^\perp)$ and at the point $s(\theta)$ instead of the standard basis and the point $0$, we obtain that $\Phi_\theta\bigl(s(\theta)+s\bigr)=s(\theta)-s+o(s)$ with a $o(s)$ that is uniform in $\theta$.
Therefore, $\Phi_\theta$ is of class $\cc^1$ both in $s$ and $\theta$ in a neighborhood of $s_0,\theta_0$.
If we use the fact that $\gamma$ has a Taylor expansion of order $k$, Lemma \ref{lem:reginv} implies that $\Phi_\theta$ is of class $\cc^{k-1}$
both in $s$ and $\theta$.

An example in which the above setting is satisfied is when $\Gamma$ is a closed convex curve with non vanishing curvature.
As this is the only instance in which we will appeal to the regularity of $\Phi_\theta$, let us summarize what we have just proved in this setting:

\begin{proposition}
\label{prop:reg}
Let $k\geq 2$ be an integer. Let $\Gamma$ be a closed convex $\cc^k$-smooth curve with non vanishing curvature and 
let $\gamma\,:[0,1)\to\R^2$ be a parametrization of $\Gamma$
such that $\gamma$ is of class $\cc^k$ and $\gamma'$ does not vanish.
For every $\theta\in\R$, let $\Phi_\theta\,:[0,1]\to[0,1)$ be the mapping that is given by
$\pi_\theta\gamma^{-1}\bigl(\gamma(s)\bigr)=\{s,\Phi_\theta(s)\}$ for $s\in[0,1]$ . Then $\Phi_\theta$ is well defined,
one-to-one on $[0,1)$, of class $\cc^{k-1}$ in $s$ and $\theta$.
\end{proposition}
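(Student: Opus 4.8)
The plan is to assemble the three local regimes already isolated in this section into a single global statement, the only genuinely new input being the geometric facts supplied by convexity. First I would record that, since $\Gamma$ is closed, convex and has non-vanishing curvature, for each fixed $\theta$ the function $\chi_\theta(s):=\pi_\theta\gamma(s)$ has exactly two critical points on $[0,1)$ --- the two points where the tangent to $\Gamma$ is parallel to $\theta^\perp$, equivalently where the outward normal is $\pm\theta$ --- and that $\chi_\theta''=\pi_\theta\gamma''$ does not vanish there precisely because the curvature does not (at such a point $\gamma'\parallel\theta^\perp$, so $\pi_\theta\gamma''=\scal{\gamma'',\theta}\neq0$). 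Between these two points $\chi_\theta$ is strictly monotone on each of the two complementary arcs, so every value strictly between $\min\chi_\theta$ and $\max\chi_\theta$ is attained exactly twice, once on each arc. This places $\gamma,\theta$ in the situation of Corollary \ref{cor:2} with $I_0=\emptyset$; the map $\Phi_\theta$ is then the involution exchanging the two preimages and fixing the two critical points, which immediately yields that it is well defined and one-to-one on $[0,1)$.

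For the regularity away from the critical points I would simply invoke the Implicit Function Theorem applied to $F(s,t,\theta)=\pi_\theta\gamma(s)-\pi_\theta\gamma(t)$, exactly as above. If neither $s$ nor $\Phi_\theta(s)$ is a critical point then $\partial_t F=-\pi_\theta\gamma'\bigl(\Phi_\theta(s)\bigr)\neq0$, so $\Phi_\theta$ is $\cc^k$ in $s$ with derivative \eqref{eq:diffphis}; moreover the two points $\gamma(s)$ and $\gamma\bigl(\Phi_\theta(s)\bigr)$ are distinct and share the same $\theta$-projection, so their difference is a non-zero multiple of $\theta^\perp$ and $\partial_\theta F=F\bigl(s,\Phi_\theta(s),\theta^\perp\bigr)\neq0$, which by \eqref{eq:diffphitheta} supplies the dependence in $\theta$. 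Hence on the open set where both $s$ and $\Phi_\theta(s)$ avoid the critical points, $\Phi_\theta$ is of class $\cc^{k-1}$ jointly in $(s,\theta)$.

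The delicate point --- and the main obstacle --- is the behaviour near a critical point, where the two branches merge and the Implicit Function Theorem in the variable $t$ breaks down. Here I would follow the two-step procedure sketched above: first locate the critical point itself by solving $\pi_\theta\gamma'\bigl(s(\theta)\bigr)=0$, which is legitimate since $\partial_s(\pi_\theta\gamma')=\pi_\theta\gamma''\neq0$, producing a $\cc^{k-1}$ function $s(\theta)$; then pass to the moving orthonormal frame $(\theta,\theta^\perp)$ centred at $\gamma\bigl(s(\theta)\bigr)$, in which the Taylor expansion of $\gamma$ has vanishing linear term in the $\theta$-direction and a non-zero quadratic coefficient, precisely the hypothesis of Lemma \ref{lem:reginv}. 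That lemma then identifies $\Phi_\theta$ with the local $\cc^{k-1}$ involution $\ffi$ it produces, and the uniformity in $\theta$ of the remainder (inherited from the smoothness of $s(\theta)$) upgrades this to joint $\cc^{k-1}$ regularity in $(s,\theta)$. Finally I would patch the interior charts and the two critical-point charts: they cover $[0,1)$ and agree with the single involution $\Phi_\theta$ on overlaps, so they glue to a globally well defined, one-to-one, $\cc^{k-1}$ map, which is the assertion. The only genuinely non-routine ingredient is this degenerate chart, and it is exactly what Lemma \ref{lem:reginv} was isolated to handle.
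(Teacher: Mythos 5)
Your proof is correct and follows essentially the same route as the paper: the Implicit Function Theorem applied to $F(s,t,\theta)=\pi_\theta\gamma(s)-\pi_\theta\gamma(t)$ away from the tangency points, the auxiliary function $s(\theta)$ and Lemma \ref{lem:reginv} in the moving frame $(\theta,\theta^\perp)$ at the tangency points, and gluing. The only difference is that you spell out the preliminary step the paper leaves implicit --- that convexity and non-vanishing curvature force exactly two non-degenerate critical points of $\pi_\theta\gamma$, so that Corollary \ref{cor:2} applies with $I_0=\emptyset$ and $\Phi_\theta$ is a well-defined involution --- which is a welcome clarification but not a different argument.
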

%

\subsection{Two lines: a dynamical system approach}
\label{sec:general}

We will now consider Heisenberg Uniqueness Pairs of the form $(\Gamma,\ell_{\theta_1}\cup\ell_{\theta_2})$
where $\Gamma=\{\gamma(s),s\in I\}$ is a piecewise smooth curve, and $\theta_1\not=\theta_2\in[-\pi/2,\pi/2)$ are two angles.
Assume that for both angles, $\Gamma$ is as in Corollary \ref{cor:2}.
We thus have two splittings $I=I_0^1\cup I_+^1\cup I_-^1=I_0^2\cup I_+^2\cup I_-^2$
and two maps $\Phi_j=\Phi_{\theta_j}$, $j=1,2$. Write $\Gamma^j_\eps$, $j=1,2$, $\eps=0,+,-$ for the corresponding parts of $\Gamma$.

Let $\mu\in\aa\cc(\Gamma)$ and assume that $\widehat{\mu}(\xi)=0$ for $\xi\in\ell_{\theta_1}\cup\ell_{\theta_2}$.
According to Corollary \ref{cor:2}, $f_\mu=0$ on $I_0^1\cup I_0^2$. It follows that 
$\mu\in\aa\cc\bigl(\Gamma\setminus(\Gamma_0^1\cup \Gamma_0^2)\bigr)$.
Without loss of generality, we may now assume that $I_0^1=I_0^2=\emptyset$.

Since $\Phi_j$ is one to one, from \eqref{eq:symint} we deduce that, for every interval $J\subset I$,
\begin{equation}
\label{eq:symint2}
\int_{\Phi_j(J)}|f_\mu(s)|\,\mbox{d}s=\int_J|f_\mu(s)|\,\mbox{d}s.
\end{equation}

The fact that $(\Gamma,\ell_{\theta_1}\cup\ell_{\theta_2})$ is a Heisenberg Uniqueness Pair will depend on the properties of
the dynamical system generated by the map $\Phi=\Phi_{2}\circ\Phi_1$. We will denote by $\Phi^n$ the $n$-th iterate of $\Phi$.

We will now prove three lemmas that will allow us to establish Heisenberg Uniqueness.

\begin{lemma}
\label{lem:wandering}
Let $J\subset I$ be an interval and assume that $J$ is \emph{wandering} for $\Phi:=\Phi_2\circ\Phi_1$, that is for every $j\geq 1$
$\Phi^j(J)\cap J=\emptyset$ (up to a set of measure $0$). If $\mu\in\aa\cc(\Gamma)$ is such that
$\widehat{\mu}=0$ on $\ell_{\theta_1}\cup\ell_{\theta_2}$ then $f_\mu=0$ on $\dst\bigcup_{j=0}^\infty \Phi^j(J)$.
\end{lemma}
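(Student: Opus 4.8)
The plan is to recognize this as the classical fact that a \emph{wandering set in a finite measure space is null}, where the invariant measure is $\nu:=|f_\mu(s)|\,\mathrm{d}s$ itself.

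First I would set $\nu(A)=\int_A|f_\mu(s)|\,\mathrm{d}s$. Since $f_\mu\in L^1(I)$, this is a finite Borel measure on $I$ with total mass $\nu(I)=\norm{f_\mu}_{L^1}<\infty$, and crucially $\nu$ is absolutely continuous with respect to Lebesgue measure. The relation \eqref{eq:symint2} says exactly that $\nu\bigl(\Phi_j(J)\bigr)=\nu(J)$ for every interval $J$ and $j=1,2$. Each $\Phi_j$ is a bijection of $I$ (indeed an involution, since the notation gives $\Phi_{\theta}(s_\pm)=s_\mp$), monotone on each of $I_\pm$ and hence sending intervals to intervals; a routine monotone-class extension upgrades \eqref{eq:symint2} to $\nu\bigl(\Phi_j(A)\bigr)=\nu(A)$ for all Borel $A$. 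Composing, $\Phi=\Phi_2\circ\Phi_1$ is a $\nu$-preserving bijection, and $\nu\bigl(\Phi^{\pm1}(A)\bigr)=\nu(A)$ for all Borel $A$.

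Next I would verify that the iterates $\Phi^j(J)$, $j\geq0$, are pairwise $\nu$-disjoint. For $0\leq i<j$, applying the bijection $\Phi^{-i}$ gives
\[
\Phi^{-i}\bigl(\Phi^i(J)\cap\Phi^j(J)\bigr)=J\cap\Phi^{\,j-i}(J).
\]
The wandering hypothesis makes the right-hand side Lebesgue-null, hence $\nu$-null because $\nu$ is absolutely continuous with respect to Lebesgue measure; and since $\Phi^{-i}$ preserves $\nu$, this forces $\nu\bigl(\Phi^i(J)\cap\Phi^j(J)\bigr)=0$. Thus the iterates are $\nu$-disjoint, and the finite-mass bound applies: since they all sit inside $I$,
\[
\sum_{j=0}^\infty\nu\bigl(\Phi^j(J)\bigr)=\nu\Bigl(\bigcup_{j=0}^\infty\Phi^j(J)\Bigr)\leq\nu(I)<\infty.
\]
But $\nu\bigl(\Phi^j(J)\bigr)=\nu(J)$ for every $j$ by invariance, so the constant series $\sum_j\nu(J)$ converges, which is possible only if $\nu(J)=0$, whence $\nu\bigl(\Phi^j(J)\bigr)=0$ for all $j$. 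Unwinding the definition, $\int_{\Phi^j(J)}|f_\mu|=0$, i.e. $f_\mu=0$ almost everywhere on each $\Phi^j(J)$ and therefore on $\bigcup_{j\geq0}\Phi^j(J)$.

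The only delicate point, and the one I expect to need the most care, is the bookkeeping between the two measures involved: the wandering condition is stated for Lebesgue measure, whereas the invariance we can exploit is invariance of $\nu$. The absolute continuity $\nu\ll\text{Lebesgue}$ is precisely what reconciles the two, and the invertibility of $\Phi$ (coming from the involutivity of each $\Phi_j$) is what allows the wandering condition on $J$ alone to propagate into genuine pairwise disjointness of all the iterates.
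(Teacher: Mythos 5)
Your proof is correct and follows essentially the same route as the paper: the paper likewise uses the invariance \eqref{eq:symint2} to obtain $\int_{\Phi^k(J)}|f_\mu(s)|\,\mathrm{d}s=\int_J|f_\mu(s)|\,\mathrm{d}s$ for all $k$, then sums this constant over the pairwise disjoint iterates inside the finite measure space to force $\int_J|f_\mu(s)|\,\mathrm{d}s=0$, hence $f_\mu=0$ on every $\Phi^k(J)$. The only difference is that you make explicit the bookkeeping the paper leaves implicit, namely the extension of \eqref{eq:symint2} from intervals to Borel sets and the deduction of pairwise disjointness of the iterates from the wandering hypothesis via invertibility of $\Phi$ and absolute continuity of $|f_\mu|\,\mathrm{d}s$ with respect to Lebesgue measure.
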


\begin{proof}
According to \eqref{eq:symint2}, for every interval $J$
$$
\int_{\Phi_2\bigl(\Phi_1(J)\bigr)}|f_\mu(s)|\,\mbox{d}s=\int_{\Phi_1(J)}|f_\mu(s)|\,\mbox{d}s
=\int_{J}|f_\mu(s)|\,\mbox{d}s
$$
and more generally, for every $k\geq0$
$$
\int_{\Phi^k(J)}|f_\mu(s)|\,\mbox{d}s=\int_{J}|f_\mu(s)|\,\mbox{d}s.
$$
But as the interval $J$ is wandering and $f_\mu\in L^1(I)$
\begin{eqnarray*}
+\infty&>&\int_{\bigcup_{j=0}^\infty \Phi^j(J)}|f_\mu(s)|\,\mbox{d}s=\sum_{j=0}^\infty\int_{\Phi^j(J)}|f_\mu(s)|\,\mbox{d}s\\
&=&\sum_{j=0}^\infty\int_{J}|f_\mu(s)|\,\mbox{d}s
\end{eqnarray*}
so that $0=\dst\int_{J}|f_\mu(s)|\,\mbox{d}s=\int_{\Phi^k(J)}|f_\mu(s)|\,\mbox{d}s$ thus $f_\mu=0$ on $\Phi^k(J)$.
\end{proof}

\begin{lemma}
\label{lem:attract}
Let $J\subset I$ be an interval and assume that $J$ is \emph{attractive} for $\Phi:=\Phi_2\circ\Phi_1$, that is,
there exists $k$ such that $\Phi^k(J)\subset J$. If $\mu\in\aa\cc(\Gamma)$ is such that
$\widehat{\mu}=0$ on $\ell_{\theta_1}\cup\ell_{\theta_2}$ then $\mbox{supp}\,f_\mu\cap J\subset\dst\bigcap_{n\geq 1}\Phi^{nk}(J)$.
\end{lemma}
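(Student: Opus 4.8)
The plan is to reuse the single fact that drives the previous lemma: the measure invariance. Writing $\d\nu:=|f_\mu(s)|\,\d s$, which is a \emph{finite} measure since $f_\mu\in L^1(I)$, the chain of equalities displayed in the proof of Lemma \ref{lem:wandering} gives $\nu\bigl(\Phi^m(J)\bigr)=\nu(J)$ for every $m\geq0$ and every interval $J$. This is all the analytic input I need; the rest is a soft measure-theoretic argument exploiting the attractivity hypothesis.

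First I would turn the hypothesis $\Phi^k(J)\subset J$ into a monotone nesting of the iterated images. Applying the image map $\Phi^k$ to both sides of $\Phi^k(J)\subset J$ and using $A\subset B\Rightarrow\Phi^k(A)\subset\Phi^k(B)$ yields $\Phi^{2k}(J)\subset\Phi^k(J)$, and by induction $\Phi^{(n+1)k}(J)\subset\Phi^{nk}(J)$ for all $n\geq1$. Hence one has a decreasing family
$$
J\supset\Phi^k(J)\supset\Phi^{2k}(J)\supset\cdots,
$$
and, by the invariance recalled above applied with $m=nk$, every term carries the same mass $\nu\bigl(\Phi^{nk}(J)\bigr)=\nu(J)$.

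Next, setting $K:=\bigcap_{n\geq1}\Phi^{nk}(J)\subset\Phi^k(J)\subset J$, I would invoke continuity of measure from above. Since $\nu$ is finite (indeed $\nu(J)\leq\norm{f_\mu}_{L^1}<\infty$) and the sets $\Phi^{nk}(J)$ decrease to $K$, we get $\nu(K)=\lim_{n\to\infty}\nu\bigl(\Phi^{nk}(J)\bigr)=\nu(J)$. As $K\subset J$, this forces $\nu(J\setminus K)=\nu(J)-\nu(K)=0$, i.e.\ $\int_{J\setminus K}|f_\mu(s)|\,\d s=0$, so that $f_\mu=0$ almost everywhere on $J\setminus K$. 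In the essential-support language used throughout, this is precisely $\supp f_\mu\cap J\subset\bigcap_{n\geq1}\Phi^{nk}(J)$.

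I do not expect a genuine obstacle here: conceptually the statement is a one-line consequence of measure invariance plus the shrinking of the orbit of the attractive interval. The only points meriting a word of care are bookkeeping ones, namely that each $\Phi^{nk}(J)$ is measurable (it is the image of an interval under a finite composition of the one-to-one piecewise-$\cc^1$ maps $\Phi_1,\Phi_2$, hence a finite union of intervals) and that the finiteness of $\nu$ is what legitimizes passing to the limit along the decreasing family via continuity from above.
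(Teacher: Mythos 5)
Your proof is correct and follows essentially the same route as the paper's: both rest on the invariance $\int_{\Phi^{nk}(J)}|f_\mu(s)|\,\mbox{d}s=\int_J|f_\mu(s)|\,\mbox{d}s$ together with the nesting $\Phi^{(n+1)k}(J)\subset\Phi^{nk}(J)$. The only (immaterial) difference is that the paper extracts the conclusion layer by layer, showing $f_\mu=0$ on each annulus $\Phi^{nk}(J)\setminus\Phi^{(n+1)k}(J)$, whereas you pass to the limit in one step via continuity from above of the finite measure $|f_\mu|\,\mbox{d}s$.
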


\begin{proof} As in the previous proof
$$
\int_{J}|f_\mu(s)|\,\mbox{d}s=\int_{\Phi^k(J)}|f_\mu(s)|\,\mbox{d}s
$$
so that, if $\Phi^k(J)\subset J$, $f_\mu=0$ on $J\setminus \Phi^k(J)$. The result follows by noting that $\Phi^{(n+1)k}(J)\subset\Phi^{nk}(J)$.
\end{proof}

The last lemma only applies to closed curves. In this case, we can parametrize $\gamma$ with a function $\gamma\,:[0,1]\to\R^2$
with $\gamma(1)=\gamma(0)$ and $\gamma$ is one-to-one on $[0,1)$. A rotation of angle $\alpha$ is then the map 
$R_\alpha:t\to t+\alpha\,\mbox{mod}\,1$.

\begin{lemma}
\label{lem:rot}
Assume further that $\Gamma$ is a closed curve.
Assume that there is a $\cc^1$-diffeomorphism $h$ such that $\Phi$ is conjugated by $h$ to a rotation $R_\alpha$ with $\alpha\in\R\setminus\Q$:
$\Phi=h^{-1}\circ R_\alpha\circ h$.
Then $(\Gamma,\ell_{\theta_1}\cup\ell_{\theta_2})$ is a Heisenberg Uniqueness Pair.
\end{lemma}

\begin{proof} First note that, since $0\in\ell_{\theta_1}$, $\widehat{\mu}(0)=0$, that is
$$
\int_0^1 f_\mu(s)\,\mbox{d}s=0.
$$

As previously, but using \eqref{eq:symintsign} instead of
\eqref{eq:symint}, for every interval $I$,
$$
\int_{\Phi(I)}f_\mu(s)\,\mbox{d}s=\int_ If_\mu(s)\,\mbox{d}s.
$$
Thus, changing variable $s=h^{-1}(t)$ in both integrals we get
$$
\int_{h^{-1}(I)}\frac{f_\mu\bigl(h^{-1}(R_{-\alpha}t)\bigr)}{h'\bigl(h^{-1}(R_{-\alpha}t)\bigr)}\,\mbox{d}t
=\int_{R_\alpha h^{-1}(I)}\frac{f_\mu\bigl(h^{-1}(t)\bigr)}{h'\bigl(h^{-1}(t)\bigr)}\,\mbox{d}t
=\int_{h^{-1}(I)}\frac{f_\mu\bigl(h^{-1}(t)\bigr)}{h'\bigl(h^{-1}(t)\bigr)}\,\mbox{d}t.
$$
As this holds for every $I$, 
\begin{equation}
\label{eq:ergodic}
\frac{f_\mu\bigl(h^{-1}(R_{-\alpha}t)\bigr)}{h'\bigl(h^{-1}(R_{-\alpha}t)\bigr)}=
\frac{f_\mu\bigl(h^{-1}(t)\bigr)}{h'\bigl(h^{-1}(t)\bigr)}\quad a.e.
\end{equation}
But then
$$
\frac{f_\mu\bigl(h^{-1}(t)\bigr)}{h'\bigl(h^{-1}(t)\bigr)}=\frac{1}{n}\sum_{k=1}^n
\frac{f_\mu\bigl(h^{-1}(R_{-\alpha}^kt)\bigr)}{h'\bigl(h^{-1}(R_{-\alpha}^kt)\bigr)}
\to \mathcal{I}:=\int_0^1\frac{f_\mu\bigl(h^{-1}(t)\bigr)}{h'\bigl(h^{-1}(t)\bigr)}\,\mbox{d}t
$$
for almost every $t$, according to Birkhoff's Ergodic Theorem. Changing variable $s=h^{-1}(t)$
in the integral, we obtain that $\mathcal{I}=0$ so that $f_\mu=0$.
\end{proof}

\section{Heisenberg Uniqueness pairs obtained with the help of wandering sets}
\label{sec:wandering}
\subsection{Graphs of functions that go to $+\infty$ in $\pm\infty$}

\begin{theorem}\label{th:main}
Let $\psi$ be a piecewise smooth function and let $\theta_1\not=\theta_2\in(0,\pi)$ be such that

-- $\psi(s)\sin\theta_i+s\cos\theta_i\to+\infty$ when $s\to\pm\infty$

-- $\psi(s)\sin\theta_i+s\cos\theta_i$ has a unique local minimum.

Let $\Gamma=\bigl\{\bigl(s,\psi(s)\bigr),s\in\R\bigr\}$ be the graph of $\psi$. Then 
$(\Gamma,\ell_{\theta_1}\cup\ell_{\theta_2})$ is a Heisenberg uniqueness pair.
\end{theorem}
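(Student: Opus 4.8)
The plan is to realize each of the two angles as a monotone involution of the real line, to show that their composition $\Phi=\Phi_2\circ\Phi_1$ is an orientation-preserving homeomorphism of $\R$ with essentially no fixed point, and to conclude that $\R$ is covered by wandering intervals, so that Lemma \ref{lem:wandering} forces $f_\mu\equiv0$. First I set $\chi_i(s)=\pi_{\theta_i}\gamma(s)=s\cos\theta_i+\psi(s)\sin\theta_i$. The two hypotheses say precisely that $\chi_i$ is strictly decreasing up to its unique minimizer $m_i$ and strictly increasing afterwards, with $\chi_i(s)\to+\infty$ as $s\to\pm\infty$. Hence every level set of $\chi_i$ has at most two points, $I_0^i=\emptyset$, and $(\gamma,\theta_i)$ fits Corollary \ref{cor:2} with $I_-^i=(-\infty,m_i)$ and $I_+^i=(m_i,+\infty)$. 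Each $\Phi_i=\Phi_{\theta_i}$ is then a continuous, strictly decreasing involution of $\R$ fixing only $m_i$ (extending it by $\Phi_i(m_i)=m_i$), so $\Phi=\Phi_2\circ\Phi_1$ is an orientation-preserving homeomorphism of $\R$. I also record the consequence of \eqref{eq:symint2}: $\int_{\Phi_j(J)}|f_\mu|=\int_J|f_\mu|$ for every interval $J$, whence $\int_{\Phi^k(J)}|f_\mu|=\int_J|f_\mu|$ for all $k$, which is exactly the input needed by Lemma \ref{lem:wandering}.

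The heart of the argument is to locate $\mathrm{Fix}(\Phi)$. Since $\Phi_2$ is an involution, $\Phi(s_0)=s_0$ is equivalent to $\Phi_1(s_0)=\Phi_2(s_0)=:t_0$. If $t_0\neq s_0$ then $\gamma(s_0)\neq\gamma(t_0)$, and the relations $\Phi_1(s_0)=t_0$ and $\Phi_2(s_0)=t_0$ mean that the chord $\gamma(s_0)-\gamma(t_0)$ is orthogonal both to $\theta_1$ and to $\theta_2$; since $\theta_1\neq\theta_2$ span two distinct directions of $\R^2$, a nonzero vector cannot be orthogonal to both, a contradiction. Hence $t_0=s_0$, and then $\Phi_1(s_0)=\Phi_2(s_0)=s_0$ forces $s_0=m_1=m_2$. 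Therefore $\Phi$ has no fixed point whatsoever, unless $m_1=m_2$, in which case its unique fixed point is $m:=m_1=m_2$.

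It remains to exploit this. On each connected component $U$ of $\R\setminus\mathrm{Fix}(\Phi)$ — which is all of $\R$ when $m_1\neq m_2$, and otherwise one of the two half-lines cut out by $m$ — the map $\Phi$ is an increasing homeomorphism of $U$ without fixed point, so $\Phi(s)-s$ has constant sign on $U$ and every orbit is strictly monotone. Fixing $s_0\in U$ and setting $J=[s_0,\Phi(s_0)]$ (or $[\Phi(s_0),s_0]$ if $\Phi$ decreases indices), the images $\Phi^k(J)=[\Phi^k(s_0),\Phi^{k+1}(s_0)]$ are consecutive abutting intervals meeting $J$ only in a single endpoint, so $J$ is wandering and $\bigcup_{k\geq0}\Phi^k(J)$ exhausts the sub-half-line of $U$ from $s_0$ in the direction of the orbit. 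Lemma \ref{lem:wandering} then gives $f_\mu=0$ on this union, and letting $s_0$ run to the opposite end of $U$ yields $f_\mu=0$ on all of $U$. Since the components cover $\R$ up to the single point $m$, we get $f_\mu=0$ almost everywhere, i.e. $\mu=0$, which is the claim.

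I expect the one genuinely delicate point to be the fixed-point analysis: verifying that two distinct directions $\theta_1,\theta_2$ really preclude a common perpendicular chord, and handling the borderline coincidence $m_1=m_2$ (which can only occur at a corner of $\Gamma$), where one must treat the two half-line components separately rather than with a single wandering fundamental domain covering all of $\R$. Once fixed-point-freeness on each component is established, the monotone one-dimensional dynamics and the bookkeeping of the wandering images are routine, whether the orbits escape to infinity or accumulate at $m$.
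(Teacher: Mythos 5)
Your proof is correct, and it runs on the same engine as the paper's own argument---verify the hypotheses of Corollary \ref{cor:2} for both angles, study the dynamics of $\Phi=\Phi_2\circ\Phi_1$, and conclude via Lemma \ref{lem:wandering}---but the way you produce the wandering intervals is genuinely different. The paper first normalizes with the invariance properties (Inv1)--(Inv2) so that $\theta_1=-\pi/2$ and the $\theta_1$-minimizer sits at the origin, then follows the single explicit orbit $\sigma_1=\Phi_2(\sigma_0)$, $\sigma_{k+1}=\Phi_2\bigl(\Phi_1(\sigma_k)\bigr)$, proving $\sigma_k\nearrow+\infty$ by a concrete geometric argument (a horizontal chord followed by a chord in direction $\theta_2^\perp$ can only land strictly to the right); this yields one wandering interval $[\sigma_1,\sigma_2]$ whose forward orbit covers $[\sigma_1,+\infty)$, after which the vanishing of $f_\mu$ is propagated to the two remaining half-lines not by further wandering sets but by two applications of the symmetry relation \eqref{eq:symf}, once for $\Phi_2$ and once for $\Phi_1$. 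You instead prove a structural statement: $\Phi$ is an increasing homeomorphism of $\R$ whose fixed points would force a chord of $\Gamma$ orthogonal to both $\theta_1$ and $\theta_2$, impossible since two distinct directions in $(0,\pi)$ are linearly independent; hence $\mathrm{Fix}(\Phi)$ is empty unless the two minimizers coincide, in which case it is that single point. The standard dynamics of fixed-point-free increasing maps (orbits are monotone, and any finite limit would be a fixed point) then covers each complementary component by abutting wandering intervals. Your route makes the role of the hypothesis $\theta_1\neq\theta_2$ completely transparent, and it handles the degenerate case $m_1=m_2$ (the paper's case $s_2=0$, dispatched there by the slightly awkward limit $\sigma_0\to0$) on exactly the same footing as the generic one, at the modest cost of invoking the homeomorphism structure of $\Phi$ (which is free: a strictly monotone involution of $\R$ is automatically a homeomorphism). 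The paper's route needs less machinery---only one orbit and no fixed-point classification---and its explicit use of \eqref{eq:symf} shows directly how mass on one side of each minimizer mirrors to the other. Both proofs are complete; yours is arguably the more robust packaging of the same dynamical idea, and your chord-orthogonality observation is a cleaner substitute for the paper's terse remark that the only possible finite limit of $\sigma_k$ is a fixed point of $\Phi_2\circ\Phi_1$.
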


Before proving the theorem, Let us make a few comments on the hypothesis on $\psi$.

First, $(\Gamma,\ell_0)$ is a Heisenberg uniqueness pair according to corollary \ref{cor:2}. 

The following lemma shows that the requirements of Theorem \ref{th:main} are commonly met, in particular when $\psi$ is a polynomial of even degree.

\begin{lemma}\label{lem:triv}
Let $\psi$ be a function of class $\cc^1$ such that $\psi'$ satisfies the following two conditions:

-- $\psi'(t)\to-\infty$ when $t\to-\infty$ and $\psi'(t)\to+\infty$ when $t\to+\infty$;

-- $\psi'$ has only finitely many local extrema.

Then there exists $0<\theta_0<\theta_1<\pi$ such that, for $\theta\in(0,\pi)\setminus(\theta_0,\theta_1)$, 
the function $\chi$ defined by $\chi(t)=\psi(t)\sin\theta+t\cos\theta$ is such that
$\chi(t)\to+\infty$ when $t\to\pm\infty$ and $\chi$ has a unique local minimum.
\end{lemma}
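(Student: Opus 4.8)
The plan is to reduce the two requirements on $\chi$ to the single question of when the equation $\psi'(t)=-\cotan\theta$ has exactly one solution, lying on a branch where $\psi'$ increases. First I would differentiate: since $\chi(t)=\psi(t)\sin\theta+t\cos\theta$,
$$
\chi'(t)=\sin\theta\,\bigl(\psi'(t)+\cotan\theta\bigr),
$$
and for $\theta\in(0,\pi)$ we have $\sin\theta>0$, so the sign of $\chi'(t)$ is exactly the sign of $\psi'(t)-c$ where $c:=-\cotan\theta$. Thus the critical points of $\chi$ are the solutions of $\psi'(t)=c$, and it suffices to find those $c$ for which this equation has a single solution $t^*$ at which $\psi'$ passes from below $c$ to above $c$: at such a point $\chi'$ changes sign from $-$ to $+$, giving a unique local minimum, while the hypotheses $\psi'(t)\to-\infty$ as $t\to-\infty$ and $\psi'(t)\to+\infty$ as $t\to+\infty$ force $\chi'$ to be negative for $t$ very negative and positive for $t$ very large, so that $\chi(t)\to+\infty$ at both ends.

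Next I would exploit the structure of $\psi'$. Since $\psi'$ is continuous (as $\psi\in\cc^1$) and has only finitely many local extrema, say located at $t_1<\cdots<t_m$, it is strictly monotone on each of $(-\infty,t_1)$, $(t_1,t_2),\dots,(t_m,+\infty)$, and the boundary behaviour forces $\psi'$ to be increasing on the first and last of these intervals. Set
$$
A=\max_{1\le i\le m}\psi'(t_i),\qquad B=\min_{1\le i\le m}\psi'(t_i)
$$
(if $m=0$ then $\psi'$ is a strictly increasing bijection of $\R$ and the conclusion holds for every $\theta\in(0,\pi)$, so any $\theta_0<\theta_1$ works; I treat this degenerate case separately). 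The key step is the uniqueness claim: if $c>A$, then on every bounded monotone piece $(t_i,t_{i+1})$ the values of $\psi'$ lie between the two endpoint extrema and are therefore $\le A<c$, and on $(-\infty,t_1)$ one has $\psi'<\psi'(t_1)\le A<c$ as well; hence no crossing of $c$ occurs until the terminal branch $(t_m,+\infty)$, where $\psi'$ increases from a value $\le A<c$ up to $+\infty$ and meets $c$ exactly once. The symmetric argument shows that if $c<B$ the unique solution lies on $(-\infty,t_1)$. In both cases $\psi'$ is increasing at the crossing, so by the first paragraph $\chi$ has a unique local minimum and tends to $+\infty$ at $\pm\infty$.

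Finally I would translate $c>A$ and $c<B$ back into conditions on $\theta$ using $c=-\cotan\theta$ and the fact that $\cotan$ is a strictly decreasing bijection of $(0,\pi)$ onto $\R$. Choosing any $A'>A$ and $B'<B$ and setting $\theta_1=\argcotan(-A')$ and $\theta_0=\argcotan(-B')$, the inequalities $B'<B\le A<A'$ give $-A'<-B'$ and hence $0<\theta_0<\theta_1<\pi$ by monotonicity of $\argcotan$. For $\theta\le\theta_0$ one gets $c=-\cotan\theta\le B'<B$, and for $\theta\ge\theta_1$ one gets $c\ge A'>A$; thus the desired properties hold on all of $(0,\pi)\setminus(\theta_0,\theta_1)$. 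I expect the main obstacle to be the uniqueness claim, i.e.\ verifying that once $c$ exceeds $A$ (or falls below $B$) all the intermediate oscillations of $\psi'$ are trapped strictly below (above) $c$, so that the terminal increasing branch provides the only crossing; the slack introduced by $A'>A$ and $B'<B$ is what cleanly yields the strict inequality $\theta_0<\theta_1$ and covers the closed endpoints of the good set.
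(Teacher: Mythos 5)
Your proof is correct and follows essentially the same route as the paper's: both reduce to the sign of $\chi'(t)=\sin\theta\bigl(\psi'(t)+\cotan\theta\bigr)$, use the finiteness of the local extrema of $\psi'$ to get strictly monotone unbounded tails of $\psi'$, and choose $\theta$ so that the level $-\cotan\theta$ lies above (or below) every extremal value, forcing a single crossing and hence a single sign change of $\chi'$ from negative to positive; your asymmetric thresholds $\max_i\psi'(t_i)$ and $\min_i\psi'(t_i)$ merely replace the paper's symmetric bound $A=\max|\psi'|+1$ combined with $\theta_1=\pi-\theta_0$. The only point to tighten is the claim that $\chi'$ being eventually negative (resp.\ positive) yields $\chi(t)\to+\infty$: monotonicity alone does not suffice, so argue as the paper does that $\chi'(t)\to\mp\infty$ as $t\to\mp\infty$, hence $\pm\chi'\geq 1$ outside a compact set, and integrate.
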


\begin{proof}[Proof of Lemma \ref{lem:triv}] Note that, for $\theta\in(0,\pi)$, $\chi'(t)=\psi'(t)\sin\theta+\cos\theta\to\pm\infty$
when $t\to\pm\infty$. In particular, there exists $a>0$ such that, if $t>a$, $\chi'(t)\geq 1$. Therefore, for $t\geq a$,
$$
\chi(t)=\chi(a)+\int_a^t\chi'(s)\,\mbox{d}s\geq\chi(a)+t-a\to+\infty\mbox{ when }t\to+\infty.
$$
The proof that $\chi(t)\to+\infty$ when $t\to-\infty$ is similar.

Next, let $t_0<t_1<\cdots<t_N$ be the local extrema of $\psi'$. Then $\psi'$ is strictly increasing on $(-\infty,t_0)$ and on $(t_N,+\infty)$. Let $A=\max_{t\in[t_0,t_N]}|\psi'(t)|+1$ and $a$ be such that $\psi'(t)<-A$ on $(-\infty,a)$ and 
$\psi'(t)>A$ on $(a,+\infty)$. 
Let $\theta_0=\mathrm{arg\,cotan}A$ so that, if $0<\theta<\theta_0$, and $t>-a$
$$
\chi'(t)=\psi'(t)\sin\theta+\cos\theta\geq-A\sin\theta+\cos\theta>-A\sin\theta_0+\cos\theta_0=0.
$$
As $\chi'$ is continuous and $\chi'(t)\to-\infty$ when $t\to-\infty$, $\chi'$ vanishes at a unique point $t_\theta\in(-\infty,a)$
where it changes sign from negative to positive, therefore $\chi$ has a minimum at $t_\theta$.

Taking $\theta_1=\pi-\theta_0$, the same argument shows that there is a unique $t_\theta\in(a,+\infty)$ such that
$\chi'(t)<0$ for $t<t_\theta$ and $\chi'(t)>0$ for $t>t_\theta$ thus $\chi$ has a unique minimum at $t_\theta$.
\end{proof}

%
%

We are now in position to prove the theorem.

\begin{proof}[Proof of Theorem \ref{th:main}] As noticed above, the result is trivial if $\theta_1=0$ or $\theta_2=0$. We will thus assume
that $\theta_1,\theta_2\not=0$

Let $\gamma(s)=\bigl(s,\psi(s)\bigr)$ and let
$\sigma_0$ be the unique local minimum of $\scal{\gamma(s),\theta_1}=s\cos\theta_1+\psi(s)\sin\theta_1$.
Note that if $\gamma$ is smooth this is the unique point such that $\theta_1$ is normal to $\Gamma$
thus $\theta^\perp_1$ is tangent to $\Gamma$.
Without loss of generality, using the invariance property (Inv1),
we may assume that $\sigma_0=0$ and $\gamma(0)=(0,0)$. Using (Inv2) we may further assume that $\theta_1=-\pi/2$
so that $\theta_1^\perp=\vec{i}:=(1,0)$.

The hypothesis on $\Gamma$ and $\theta_1$ ensure that we may apply Corollary \ref{cor:2}.
In the notation of Section \ref{sec:general},
$I_0^1=\emptyset$, $I_-^1=(-\infty,0]$ and $I_+^1=[0,+\infty)$ and the map $\Phi_1\,:\R\to\R$
is the map such that, for every $s\not=0$, $s\Phi_1(s)<0$ and $\psi\bigl(\Phi_1(s)\bigr)=\psi(s)$.
Note that $\psi$ is decreasing on $I_-^1$ and increasing on $I_+^1$.

Now $\scal{\gamma(s),\theta_2}=s\cos\theta_2+\psi(s)\sin\theta_2$ has also a unique local minimum at
$s_2$. Up to a symmetry $T\,:(x,y)\to(-x,y)$, the invariance property (Inv2) shows that we may assume that $s_2\geq0$
(note that this implies that $0<\theta_2^\perp<\pi/2$).
Thus in the notation of Section \ref{sec:general},
$I_0^2=\emptyset$, $I_-^2=(-\infty,s_2]$ and $I_+^2=[s_2,+\infty)$ and the map $\Phi_2\,:\R\to\R$
is such that $\Phi_2(I_\pm^2)=I_\mp^2$ and 
$\Gamma\cap\bigl(\gamma(s)+\R\theta_2^\perp\bigr)=\bigl\{\gamma(s),\gamma\bigl(\Phi_2(s)\bigr)\bigr\}$.
Note that $\psi(s)+s\sin\theta_2$ is decreasing on $I_-^2$ and increasing for $I_+^2$.

Let us first assume that $s_2>0$ and let $\sigma_0=0$.

Next, define $\sigma_1=\Phi_2(\sigma)>s_2>0$ and, for $k\geq 1$, $\sigma_{k+1}=\Phi_2\bigl(\Phi_1(\sigma_k)\bigr)$.
We assert that $[\sigma_1,\sigma_2]$ is a wandering set for $\Phi=\Phi_2\circ\Phi_1$
and $\bigcup_{k\geq 1}[\sigma_k,\sigma_{k+1}]=[\sigma_1,+\infty)$.

Before proving this assertion, let us show that the conclusion of Theorem \ref{th:main} follows. Indeed, according to Lemma
\ref{lem:wandering}, $f_\mu=0$ on $[\sigma_1,+\infty)$. Appealing to Corollary \ref{cor:2} for $\Phi_2$, \eqref{eq:symf} reduces to
$f_\mu=0$ on $(-\infty,0]=\Phi_2^{-1}\bigl([\sigma_1,+\infty)\bigr)$ and then, appealing to Corollary \ref{cor:2} for $\Phi_1$,
\eqref{eq:symf} reduces to $f_\mu=0$ on $[0,+\infty)$ as well.

\begin{figure}[!ht]
\begin{center}
\setlength{\unitlength}{0.5cm}
\begin{picture}(16,8)
\qbezier[400](1,8)(8,-5)(15,8)
\dottedline{0.1}(8,1.5)(15.8,4.5)
\dottedline{0.1}(10.6,2.5)(5.2,2.5)
\dottedline{0.1}(5.2,2.5)(15.6,6.5)
\put(0,1.5){\vector(1,0){16}}
\put(15.5,1.7){$\theta_1^\perp$}
\put(8.1,7){$\theta_1$}
\put(8,0){\vector(0,1){8}}
\put(2.3,7){$t=\gamma(s)$}
\put(8.1,0.7){$\sigma_0$}
\put(10.9,2){$\sigma_1$}
\put(14.9,7){$\sigma_2$}
\end{picture}
\caption{The construction of $\sigma_k$}
\label{fig:cor3}
\end{center}
\end{figure}
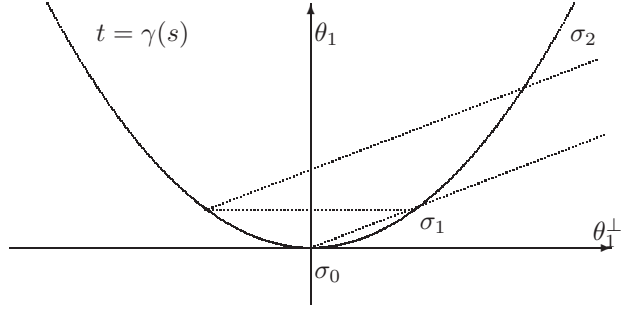

Let us now show that $\sigma_k$ is strictly increasing. This follows from a simple geometric consideration:
since $0<\theta_2^\perp<\pi/2$, if we start at a point $A\in\Gamma$ in the plane, moving horizontally to the left till we reach $\Gamma$ again
in some point $B$ and then to the right in direction $\theta_2^\perp$, we are moving upward and can therefore only reach $\Gamma$ again on the right of $A$.

More precisely, let $s_k:=\Phi_1(\sigma_k)<0<\sigma_k$ so that $\psi(s_k)=\psi(\sigma_k)$. Then, for $t>0$ if $s=s_k+t(\sigma_k-s_k)$,
$$
\psi(s)\begin{cases}<\psi(s_k)&\mbox{if }0<t<1\\
=\psi(\sigma_k)&\mbox{if }t=1\\
>\psi(s_k)&\mbox{if }t>1
\end{cases}
$$
since $\psi$ decreases on $[s_k,0]$ and increases on $\R^+$.
On the other hand, for $t>0$ $\psi(s_k)+t(\sigma_k-s_k)\sin\theta_2>\psi(s_k)=\psi(\sigma_k)$. Thus if $\psi(s_k)+t(\sigma_k-s_k)\sin\theta_2=\psi(s)$
then $t>1$ that is $s>\sigma_k$. But, by definition, $s=\Phi_2(s_k)=\Phi_2\bigl(\Phi(\sigma_k)\bigr)=\sigma_{k+1}$.

Finally, the only possible finite limit
of $\sigma_k$ is a fix point of $\Phi_2\circ\Phi_1$ that is $0$. As $\sigma_k>\sigma_1>s_2\geq0$, this is not possible.

In the case $s_2=0$, it is enough to take $\sigma_0<0$ and then $\sigma_1=\Phi_2(\sigma_0)>0$. The same reasoning works and shows that $f_\mu=0$
on $\R\setminus[\sigma_0,\sigma_1]$. But as $\sigma_0$ is arbitrary, we let $\sigma_0\to 0$ and $s_2=0$ implies $\sigma_1\to0$ as well.
\end{proof}

\subsection{Cusps}

\begin{proposition}
\label{lem:cusp}
Let $\psi\,:\R\to\R$ be a function such that 
\begin{enumerate}
\renewcommand{\theenumi}{\alph{enumi}}
\item $\psi$ is continuous, smooth on $\R\setminus\{0\}$, 

\item $\psi(0)=0$ and when $t\to\pm\infty$, $\psi(t)\to+\infty$ while $\frac{\psi(t)}{t}\to 0$,

\item $\psi$ is strictly convex on $(-\infty,0)$ and strictly concave on $(0,+\infty)$.
\end{enumerate}
Let $\Gamma=\bigl\{\bigl(t,\psi(t)\bigr),t\in\R\bigr\}$ be the graph of $\psi$.
\begin{enumerate}
\renewcommand{\theenumi}{\roman{enumi}}
\item\label{cusp1} If $\psi$ has a left or right derivative at $0$ then there is a set $\mathcal{O}$
with $0\in \mathcal{O}$ and non-empty interior such that, for $\theta\in O$, $(\Gamma,\ell_\theta)$ is a Heisenberg Uniqueness Pair.

\item\label{cusp2} For every $\theta_1\in(-\pi/2,\pi/2)$ there is a non-empty open set $O(\theta_1)$ such that,
if $\theta_2\in O(\theta_1)$, then $(\Gamma,\ell_{\theta_1}\cup\ell_{\theta_2})$ is a Heisenberg Uniqueness Pair.
\end{enumerate}
\end{proposition}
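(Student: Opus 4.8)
The plan is to analyze, for each line $\ell_\theta$, the projected profile $\chi_\theta(s):=\pi_\theta\gamma(s)=s\cos\theta+\psi(s)\sin\theta$, whose derivative $\chi_\theta'(s)=\cos\theta+\psi'(s)\sin\theta$ is completely governed by the strict monotonicity of $\psi'$ on each of $(-\infty,0)$ and $(0,+\infty)$ granted by hypothesis (c). Since $\psi(t)/t\to0$ and $\psi'$ is monotone on each half-line, one gets $\psi'(s)\to0$ as $s\to\pm\infty$, hence $\chi_\theta'(s)\to\cos\theta$ at infinity; thus for $\theta\in(-\pi/2,\pi/2)$ the only place $\chi_\theta'$ can change sign is near the cusp, where one of the one-sided slopes of $\psi$ is infinite. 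Everything then reduces to counting the critical points of $\chi_\theta$ and feeding the resulting splitting into the machinery of Section~\ref{sec:general}.

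For part \ref{cusp1}, suppose without loss of generality (the other case follows by (Inv2) with $T(x,y)=(-x,y)$) that $\psi$ has a finite \emph{right} derivative at $0$. Then $\psi'$ is bounded on a right neighborhood of $0$, while on the singular left side the sign of $\psi'\sin\theta$ is favorable once $\theta$ is taken on the appropriate side of $0$. Combining this bound near $0$ with $\psi'(s)\to0$ at infinity, I would exhibit a one-sided interval of angles $\theta$ about $0$ on which $\chi_\theta'$ keeps a constant sign, so that $\chi_\theta=\pi_\theta\gamma$ is one-to-one on all of $\R$. In the language of Corollary~\ref{cor:2} this means $I_0=\R$ and $I_\pm=\emptyset$, whence $f_\mu=0$ and $(\Gamma,\ell_\theta)$ is a Heisenberg Uniqueness Pair; this admissible set of $\theta$ (containing $0$, with non-empty interior) is the required $\mathcal{O}$, exactly as in Proposition~\ref{prop:1line}.

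For part \ref{cusp2}, fix $\theta_1$. If $\chi_{\theta_1}$ is already one-to-one the claim is trivial by part \ref{cusp1}, so I assume $\pi_{\theta_1}\gamma$ is genuinely non-injective, producing a splitting $I=I_-^1\cup I_+^1$ and the reflection map $\Phi_1$ of the Notation following Corollary~\ref{cor:2}. The idea is to choose $\theta_2$ in a small open set $O(\theta_1)$, close to but distinct from $\theta_1$ (and on the correct side, so the turning points of $\chi_{\theta_1}$ and $\chi_{\theta_2}$ are suitably ordered), and then to adapt the wandering-set construction from the proof of Theorem~\ref{th:main}: starting from a seed interval near the cusp, set $\sigma_{k+1}=\Phi_2(\Phi_1(\sigma_k))$ and show that $[\sigma_1,\sigma_2]$ is wandering for $\Phi=\Phi_2\circ\Phi_1$ with $\bigcup_k[\sigma_k,\sigma_{k+1}]$ exhausting a half-line of $\Gamma$ up to a null set. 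The driving geometric fact, just as in Theorem~\ref{th:main}, is that the successive reflections along $\theta_1^\perp$ and then $\theta_2^\perp$ push a point strictly in one direction, a monotonicity that I read off from the strict convexity on $(-\infty,0)$ and strict concavity on $(0,+\infty)$, the only candidate finite fixed point being the cusp. Lemma~\ref{lem:wandering} then gives $f_\mu=0$ on the whole orbit, and the relations \eqref{eq:symf} for $\Phi_1$ and $\Phi_2$ propagate the vanishing to the complementary branch, yielding $\mu=0$.

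The main obstacle is part \ref{cusp2}: making the wandering-and-covering claim rigorous. Two points need care. First, the cusp itself, where $\psi'$ is infinite and $\Phi_j$ degenerates, must be shown to carry no mass; here I expect to combine \eqref{eq:symint2} with the strictly monotone escape of the $\sigma_k$ to conclude that the orbit never accumulates at the cusp. Second, and more delicate, is the choice of $O(\theta_1)$: I must guarantee that for these $\theta_2$ the composite $\Phi$ has no interior periodic orbit (which would trap $|f_\mu|$ and destroy the wandering property) and that the seed orbit covers $\Gamma$ almost everywhere. I anticipate this from an explicit estimate bounding $\sigma_{k+1}-\sigma_k$ from below (forcing $\sigma_k\to+\infty$), with convexity/concavity supplying the sign of the increments; the openness of the admissible $\theta_2$ is then a continuity consequence of the differentiable dependence of $\Phi_\theta$ on $\theta$ recorded in Section~\ref{rem:smooth}.
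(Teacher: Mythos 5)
Your part \ref{cusp1} is correct and essentially the paper's own argument: for $\theta$ in a one-sided interval at $0$ determined by the finite one-sided slope, $\chi_\theta'=\cos\theta+\psi'\sin\theta$ keeps a fixed sign, so $\pi_\theta\gamma$ is injective, $I_0=\R$ in Corollary \ref{cor:2}, and a single line suffices.

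Part \ref{cusp2}, however, has a genuine gap, and it is precisely the point the paper identifies as the main difficulty: for a curve with a cusp and sublinearly growing branches, a line orthogonal to $\theta$ can meet $\Gamma$ in \emph{three} points (once on one branch, twice on the other), not two. So for $\theta_1$ outside the injectivity regime of part \ref{cusp1} there is no splitting $I=I_-^1\cup I_+^1$ and no reflection map $\Phi_1$ in the sense of Corollary \ref{cor:2}; one only has the three-term relation coming from Lemma \ref{lem:fund} and \eqref{eq:fund}, governed by \emph{two} maps $\Phi_{\theta},\Psi_{\theta}$. This collapses the machinery you invoke: the mass invariance \eqref{eq:symint2}, hence Lemma \ref{lem:wandering}, is a consequence of the two-point exchange structure, and with three-term relations one only gets inequalities, so no wandering-set argument can be run. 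Your dynamical picture is also wrong for this geometry: since $\psi(t)/t\to0$ the iteration does not push points out to infinity as in Theorem \ref{th:main} (your claimed bound forcing $\sigma_k\to+\infty$); the correct dynamics contract. What the paper actually does is: first kill $f_\mu$ on the two outer regions $s>a(\theta_1)$ and $s<c(\theta_1)$ where the $\theta_1$-fibers are singletons, so \eqref{eq:fund} reads $f_\mu(s)=0$ there; then --- and this is the idea your outline is missing --- choose $\theta_2$ not close to $\theta_1$ but close to $-\pi/2$, so that the tangency parameter $b(\theta_2)$ exceeds $a(\theta_1)$ and the \emph{third} intersection point $\Psi_{\theta_2}(s)$ of every $\theta_2$-line lies in the region already killed. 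Both three-term relations thereby reduce to two-term ones, and alternating them produces sequences $\sigma_k^-\uparrow0$, $\sigma_k^+\downarrow0$ that squeeze the support of $f_\mu$ \emph{toward the cusp} until it is empty. Without a mechanism for disposing of the third intersection point, your plan cannot be completed.
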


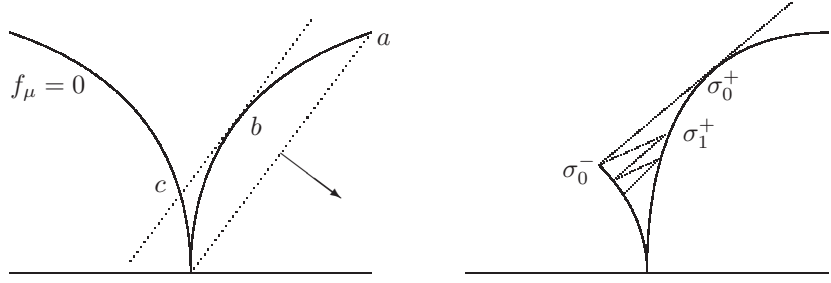
\begin{figure}[!ht]
\begin{tabular}{cc}
\setlength{\unitlength}{0.8cm}
\begin{picture}(7,4.5)
\dottedline{0.01}(0,0)(6,0)
\qbezier(3,0)(3,3)(6,4)
\qbezier(3,0)(3,3)(0,4)
\dottedline{0.1}(3,0)(6,4)
\put(4.5,2){\vector(4,-3){1}}
\put(4,2.3){$b$}
\put(6.1,3.8){$a$}
\put(2.4,1.35){$c$}
\dottedline{0.1}(2,0.2)(5,4.2)
\put(0,3){$f_\mu=0$}
\end{picture}
&
\setlength{\unitlength}{0.8cm}
\begin{picture}(6,4.5)
\dottedline{0.01}(0,0)(6,0)
\qbezier(3,0)(3,4)(6,4)
\qbezier(3,0)(3,1)(2.2,1.8)
\put(4,3){$\sigma_0^+$}
\put(3.6,2.2){$\sigma_1^+$}
\put(1.6,1.6){$\sigma_0^-$}
\dottedline{0.05}(2.2,1.8)(5.4,4.5)
\dottedline{0.05}(2.2,1.8)(3.3,2.3)
\dottedline{0.05}(3.3,2.3)(2.45,1.55)
\dottedline{0.05}(2.45,1.55)(3.2,1.9)
\dottedline{0.05}(3.2,1.9)(2.6,1.3)
\end{picture}
\end{tabular}
\caption{The construction of $\sigma_k^\pm$}
\end{figure}

\begin{proof}  Let $\gamma(t)=\bigl(t,\psi(t)\bigr)$ and $\mu\in\aa\cc(\Gamma)$. 
The main difficulty here is that a line may intersect $\Gamma$
up to three times. At this stage we have not been able to fully characterize the set
of lines that lead to Heisenberg Uniqueness Pairs. In order to prove the proposition,
it is enough to consider angles in $(-\pi/2,0)$. The invariance property (Inv2) for the map $T(x,y)=(-x,y)$
will give the result for positive angles.

The proof of \ref{cusp1}) as well as $\theta_1=0$ in \ref{cusp2}) is similar to
Proposition \ref{prop:1line}. Define $\theta_+=\arctan\gamma'(0^+)-\pi/2$ ($\theta_+=0$ if $\gamma'(0)=+\infty$)
and let $\theta\in[\theta_+,0]$. The convexity properties of $\psi$ imply that, for every $s\in\R$,
$\pi_\theta\gamma^{-1}\bigl(\gamma(s)\bigr)=\{\gamma(s)\}$. We may thus apply Corollary \ref{cor:2}.
In the notation of this corollary $I_0=\R$ so that, if $\mu\in\aa\cc(\Gamma)$ is such that $\widehat{\mu}=0$
on $\ell_\theta$, then $f_\mu=0$ thus $\mu=0$.

Now let $-\pi/2<\theta<\theta_+$ and let us first investigate
what happens when $\widehat{\mu}=0$ on $\ell_\theta$. $\Gamma$ can then be divided into 3 parts:
Define $a=a(\theta)$ by $\pi_\theta\gamma^{-1}(0)=\{0,\gamma(a)\}$. Define $b=b(\theta)$ as the unique $s\in\R$ such that
$\scal{\gamma'(b),\theta}=0$ {\it i.e.} $\theta_1^\perp$ is tangent to $\Gamma$ at $\gamma(b)$.
Let $c=c(\theta)$ be defined by
$\pi_\theta\gamma^{-1}\bigl(\gamma(b)\bigr)=\bigl\{\gamma(b),\gamma(c)\bigr\}$.
Note that $c(\theta)<0<b(\theta)<a(\theta)$, $a(\theta),b(\theta)$ are decreasing on $(-\pi/2,\theta_+)$ with
$a(\theta),b(\theta)\to+\infty$ when $\theta\to-\pi/2$
while $c(\theta)$ is decreasing with $c(\theta)\to-\infty$ as $\theta\to-\pi/2$.
Further notice that, the convexity properties of $\psi$ imply that, for $s<c$ and for $s>a$,
$\pi_\theta\gamma^{-1}\bigl(\gamma(s)\bigr)=\{\gamma(s)\}$. 
In particular, \eqref{eq:fund} reduces to $f_\mu(s)=0$.
On the other hand,
for $s\in\bigl(c(\theta),0\bigr)$ there exists a unique
$\Phi_\theta(s)\in\bigl(0,b(\theta)\bigr)$ and a unique $\Psi_\theta(s)\in\bigl(b(\theta),a(\theta)\bigr)$
such that
$$
\pi_\theta\gamma^{-1}\bigl(\gamma(s)\bigr)=\bigl\{\gamma(s),\gamma\bigl(\Phi(s)\bigr),\gamma\bigl(\Psi(s)\bigr)\bigr\}.
$$
The maps $\Phi_\theta,\Psi_\theta$ are also onto.

Now, fix $\theta_1\in(-\pi/2,0)$ and let $\theta_-$ be such that $b(\theta)\geq a(\theta_1)$ if $-\pi/2\leq\theta\leq \theta_-$
and let $\theta_2\in[-\pi/2,\theta_-]$ (note that $\theta_2<\theta_1$). Write $\Phi_j,\Psi_j$ for $\Phi_{\theta_j},\Psi_{\theta_j}$.
Note that, for $s\in\bigl[c(\theta_2),0\bigr)$, $\Psi(s)>b(\theta_2)>a(\theta_1)$.

Assume now that $\mu\in\aa\cc(\Gamma)$ is such that 
$\widehat{\mu}=0$ on $\ell_{\theta_1}\cup\ell_{\theta_2}$. Let $\sigma_0^+=a(\theta_1)$ and $\sigma_0^-=c(\theta_1)$.
As noticed above, $f_\mu(s)=0$ for $s>\sigma_0^+$ and $s<\sigma_0^-$. 
Let $\sigma_1^+=\Phi_2(\sigma_0^-)$ and note that $\sigma_1^+<b(\theta_1)$.
Then \eqref{eq:fund} for $\theta=\theta_2$ and $s<\sigma_0^-$ reduces to $f_\mu\bigl(s)=0$ for $s>\sigma_1^+$.

Next define inductively $\sigma_k^-=\Phi_1^{-1}(\sigma_k^+)$ and $\sigma_{k+1}^+=\Phi_2(\sigma_k^-)$
so that $\sigma_k^-$ is increasing, $\sigma_k^+$ is decreasing, both have limit $0$.
Moreover, applying \eqref{eq:fund} for $\theta=\theta_1$ and $s\in(\sigma_{k-1}^+,\sigma_k^+)$
shows that $f_\mu(s)=0$ for $s\in(\sigma_k^-,\sigma_{k-1}^-)$ and applying \eqref{eq:fund} for $\theta=\theta_2$
shows that  $f_\mu(s)=0$ for $s\in(\sigma_k^+,\sigma_{k+1}^+)$.
\end{proof}

\subsection{The graph of $t\to |t|^\alpha$, $\alpha>0$}

We can now prove point \ref{mt2} of the main theorem.

\begin{theorem}
\label{cor:thmain}
Let $p\geq 1$ and $\Gamma=\{(s,|s|^p),\ s\in\R\}$. There exists a set $E\subset[-\pi/2,\pi/2)^2$ of positive measure
such that, if $(\theta_1,\theta_2)\in E$, 
$(\Gamma,\ell_{\theta_1}\cup\ell_{\theta_2})$ is a Heisenberg Uniqueness Pair.
\end{theorem}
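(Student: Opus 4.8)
The plan is to deduce this directly from Theorem \ref{th:main}: it suffices to exhibit a set of angles $G\subset(0,\pi)$ of positive length such that for every $\theta\in G$ the function $\chi_\theta(s):=\pi_\theta\gamma(s)=|s|^p\sin\theta+s\cos\theta$ (with $\gamma(s)=(s,|s|^p)$) tends to $+\infty$ as $s\to\pm\infty$ and has a unique local minimum. Indeed, for any two distinct $\theta_1,\theta_2\in G$ both hypotheses of Theorem \ref{th:main} hold, so $(\Gamma,\ell_{\theta_1}\cup\ell_{\theta_2})$ is a Heisenberg Uniqueness Pair, and the set of such admissible pairs will have positive planar measure.

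For $p>1$ the function $\psi(s)=|s|^p$ is of class $\cc^1$ with $\psi'(s)=p\,\sgn(s)|s|^{p-1}$ strictly increasing on $\R$ and onto, and $\psi'(s)\to\pm\infty$ as $s\to\pm\infty$. Hence for \emph{every} $\theta\in(0,\pi)$ (where $\sin\theta>0$) the derivative $\chi_\theta'(s)=\psi'(s)\sin\theta+\cos\theta$ is continuous, strictly increasing, and runs from $-\infty$ to $+\infty$; it therefore has a single zero at which $\chi_\theta'$ changes sign from negative to positive, giving $\chi_\theta$ a unique local minimum, while the growth of $\chi_\theta'$ forces $\chi_\theta\to+\infty$ at both ends. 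Thus one may take $G=(0,\pi)$. Equivalently, this is the situation of Lemma \ref{lem:triv}, whose hypotheses are met since $\psi'$ has no local extrema.

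The value $p=1$ must be treated separately, because $\psi(s)=|s|$ is not $\cc^1$ and Lemma \ref{lem:triv} does not apply; however $\psi$ is piecewise smooth and $\chi_\theta$ is piecewise linear, equal to $s(\cos\theta+\sin\theta)$ for $s>0$ and to $s(\cos\theta-\sin\theta)$ for $s<0$. Inspecting the two slopes shows that $\chi_\theta\to+\infty$ at $\pm\infty$ with a unique local minimum (necessarily at $s=0$) exactly when $\cos\theta+\sin\theta>0$ and $\cos\theta-\sin\theta<0$, that is for $\theta\in(\pi/4,3\pi/4)$; so here $G=(\pi/4,3\pi/4)$, again of positive length.

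It remains to package the conclusion in the stated coordinates. Since $\ell_\theta=\ell_{\theta-\pi}$ and the map $\Phi_\theta$ depends only on the line $\ell_\theta$, reducing the angle parameter modulo $\pi$ carries $G$ to a subset of $[-\pi/2,\pi/2)$ of the same positive measure, and the Heisenberg Uniqueness property depends only on the underlying pair of lines. Taking $E$ to be the set of pairs of distinct good lines then yields a positive-measure subset of $[-\pi/2,\pi/2)^2$ (the diagonal being null) with the required property. Given Theorem \ref{th:main}, there is no serious obstacle here: the only point needing genuine care is the non-smooth endpoint $p=1$, which forces the elementary slope computation above in place of an appeal to Lemma \ref{lem:triv}, together with the routine bookkeeping relating the angle parametrization of Theorem \ref{th:main} to the line parametrization of the statement.
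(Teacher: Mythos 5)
Your proposal is correct and follows essentially the same route as the paper: both reduce the theorem to Theorem \ref{th:main}, with $p>1$ handled for (almost) all pairs of angles and $p=1$ handled on the range where the projections $\pi_{\theta}\gamma$ go to $+\infty$ at both ends with a unique minimum. The only difference is that for $p=1$ the paper proves the stronger claim that \emph{every} pair $\theta_1\neq\theta_2$ works, supplementing Theorem \ref{th:main} with a one-line argument when $|\theta-\pi/2|>\pi/4$ (there $I_0=\R$ in Corollary \ref{cor:2}) and a direct Riemann--Lebesgue argument for the borderline slopes $\theta=\pi/4,3\pi/4$, whereas you settle for the positive-measure square $(\pi/4,3\pi/4)^2$ minus the diagonal, which is all the statement requires.
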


The case $p=2$ is due to P. Sj\"olin \cite{Sjo} and the proof of Theorem \ref{th:main} is inspired by his work.
The case $p>1$ is covered by Theorem \ref{th:main} and in this case any pair $\theta_1\not=\theta_2\in [-\pi/2,\pi/2)^2$
will work.
The case $0<p<1$ is covered by Proposition \ref{lem:cusp}. At this stage we do not have a precise description of $E$ 
and we postpone it to future work.
It remains to prove the case $p=1$. We will show that again any pair $\theta_1\not=\theta_2\in [-\pi/2,\pi/2)^2$
will work.

\begin{proof} 
 For $|\theta-\pi/2|>\pi/4$, Corollary \eqref{cor:2} shows that 
$(\Gamma,\ell_\theta)$ is a Heisenberg Uniqueness Pair since then $I_0=\R$.

If $|\theta_1-\pi/2|,|\theta_2-\pi/2|<\pi/4$, then we may again apply Theorem \ref{th:main}.

It remains to consider the case $\theta_1=\pi/4$ or $3\pi/4$ and $|\theta_2-\pi/2|\leq\pi/4$.
We will only consider the case $\theta_1=\pi/4$, the other case being similar.

Let $\gamma(s)=|s|$, and $\mu=f_\mu\,\mbox{d}s$. Write $f_\mu^\pm$ for the restriction of
$f_\mu$ to $\R^\pm$. If $\widehat{\mu}=0$ on $\ell_{\pi/4}$ then,
for every $t\in\R$
$$
0=\int_\R f_\mu(s)e^{-i(s+|s|)t/\sqrt{2}}\,\mbox{d}t
=\int_{\R^-}f_\mu^-(s)\,\mbox{d}s+\widehat{f_\mu^+}(\sqrt{2}t).
$$
Thus $\dst\int_{\R^-}f(s)\,\mbox{d}s=-\widehat{f_\mu^+}(\sqrt{2}t)$. Riemann-Lebesgue's Lemma then implies that
$\dst\int_{\R^-}f_\mu^-(s)\,\mbox{d}s=0$ thus $\widehat{f_\mu^+}(\sqrt{2}t)=0$ thus $f_\mu^+=0$.
Now, if $\theta_2\not=\pi/4$ and $\widehat{\mu}=0$ on $\ell_{\theta_2}$ then
$$
0=\int_{\R^-}f_\mu(s)e^{-i(\cos\theta_2-\sin\theta_2)st}\,\mbox{d}s
=\widehat{f_\mu}\bigl((\cos\theta_2-\sin\theta_2)t\bigr)
$$
for every $t\in\R$ and as $\cos\theta_2-\sin\theta_2\not=0$, $f_\mu=0$.
\end{proof}

\subsection{Hyperbolas}

Let $\Gamma$ be the hyperbola  
$$
\Gamma=\{(u,v)\in \R^2 \,:\ v^2-u^2=1\}.
$$ 
Let $I=(0,1/2)\cup(1/2,1)$ and $\gamma:I\to \R^2$ be a parametrization of $\Gamma$ given by 
$$
\gamma(s)=(\cotan (2\pi s), 1/\sin(2\pi s)).
$$

\begin{theorem}
\label{th:hyp}
Let $\Gamma$ be the hyperbola  $\Gamma=\{\gamma(s),s\in I\}$. Then $(\Gamma,\ell_{\pm\pi/4})$ is a Heisenberg Uniqueness Pair. Moreover if $\theta_1\neq \theta_2 \in (-\pi/2,\pi/2)$. Then $(\Gamma,\ell_{\theta_1} \cup\ell_{\theta_2})$ is  a Heisenberg Uniqueness Pair if and only if $\theta_1\not\perp\theta_2$. 
\end{theorem}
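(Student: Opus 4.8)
The plan is to conjugate the whole problem to the unit circle by a projective change of variables, after which the two maps $\Phi_{\theta_1},\Phi_{\theta_2}$ become projections of $\S^1$ from two points of the $u$-axis and their composition $\Phi=\Phi_{\theta_2}\circ\Phi_{\theta_1}$ becomes a single Möbius map with transparent dynamics. Introduce $\Sigma(u,v)=(u/v,1/v)$; since $v^2-u^2=1$ one has $(u/v)^2+(1/v)^2=1$, so $\Sigma$ carries $\Gamma$ bijectively onto $\S^1$ minus $(\pm1,0)$, with $\gamma(s)\mapsto(\cos 2\pi s,\sin 2\pi s)$. Being projective, $\Sigma$ maps lines to lines: substituting $u=x/y$, $v=1/y$ into $u\cos\theta+v\sin\theta=c$ gives $x\cos\theta+\sin\theta=cy$, a pencil of lines through the fixed point $P_\theta=(-\tan\theta,0)$ as $c$ varies. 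Hence the chord construction defining $\Phi_\theta$ (Notation after Corollary \ref{cor:2}) transfers to the involution $\iota_\theta$ of the circle sending $Q$ to the second intersection of $P_\theta Q$ with $\S^1$, i.e. $\Phi_\theta=\Sigma^{-1}\circ\iota_\theta\circ\Sigma$. The asymptotic directions $\theta=\pm\pi/4$, where $P_\theta=(\mp1,0)$ lies on the circle and the transfer degenerates, are handled directly: $\pi_{\pi/4}\gamma(s)=\frac1{\sqrt2}\bigl(\cotan 2\pi s+1/\sin 2\pi s\bigr)=\frac1{\sqrt2}\cotan\pi s$ is strictly monotone, hence one-to-one on $I$, so in the language of Corollary \ref{cor:2} we are in the case $I_0=I$ and already $(\Gamma,\ell_{\pi/4})$ — symmetrically $(\Gamma,\ell_{-\pi/4})$ — is a Heisenberg Uniqueness Pair, as in Proposition \ref{prop:1line}. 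This proves the first assertion, and it settles any pair in which one of the two angles equals $\pm\pi/4$.

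Assume now $\theta_1,\theta_2\neq\pm\pi/4$ and pass to the stereographic coordinate $t=\tan\pi s$ (so $t=\tan(\phi/2)$ with $\phi=2\pi s$), under which the excluded points become $t=0,\infty$ and the two branches of $\Gamma$ correspond to $t>0$ and $t<0$. A short collinearity computation gives $\iota_\theta(t)=k_\theta/t$ with $k_\theta=\dfrac{-\tan\theta-1}{-\tan\theta+1}$, whence
\[
\Phi(t)=\iota_{\theta_2}\circ\iota_{\theta_1}(t)=\lambda\,t,\qquad \lambda=\frac{k_{\theta_2}}{k_{\theta_1}}.
\]
Thus $\Phi$ is conjugate to multiplication by $\lambda$, with fixed points only at the excluded endpoints. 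One checks $\lambda=1\iff\theta_1=\theta_2$ and $\lambda=-1\iff\tan\theta_1\tan\theta_2=1$; the latter is the excluded relation $\theta_1\perp\theta_2$ of the statement (equivalently $\theta_1+\theta_2=\pm\pi/2$, and indeed $\tan(\pi/4)\tan(-\pi/4)=-1\neq1$, so the pair $\pm\pi/4$ is allowed). Since $\lambda\in\R$, the hypothesis $\theta_1\not\perp\theta_2$ together with $\theta_1\neq\theta_2$ is exactly $|\lambda|\neq1$.

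When $|\lambda|\neq1$ I would deduce Heisenberg uniqueness from wandering sets. In the coordinate $\log|t|$ the map $\Phi$ — or $\Phi^2$ if $\lambda<0$, which preserves each branch — becomes translation by $\log|\lambda|\neq0$, a fixed-point-free action for which every interval of length $<|\log|\lambda||$ is wandering for $\Phi$. By Lemma \ref{lem:wandering} and the invariance \eqref{eq:symint2}, $f_\mu$ vanishes on the forward orbit of each such interval; covering each branch by countably many of them (using that $\Phi^{\pm1}$ preserve $\int|f_\mu|$ by \eqref{eq:symint2}) yields $f_\mu=0$ a.e. on $I_+^1\cup I_-^1$, while Corollary \ref{cor:2}(i) gives $f_\mu=0$ on $I_0^1$. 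Hence $\mu=0$ and $(\Gamma,\ell_{\theta_1}\cup\ell_{\theta_2})$ is a Heisenberg Uniqueness Pair.

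It remains to show failure when $\theta_1\perp\theta_2$, i.e. $\lambda=-1$, and this is the step I expect to require the most care. Then $\Phi(t)=-t$ is an involution (on $\Gamma$ the central symmetry $(u,v)\mapsto(-u,-v)$), the commuting reflections $\iota_{\theta_1},\iota_{\theta_2}$ generate a Klein four-group, and almost every orbit consists of four distinct points $\{s_0,\Phi_1(s_0),\Phi_2(s_0),\Phi(s_0)\}$ forming a $4$-cycle whose edges alternate between $\Phi_1$ and $\Phi_2$. To build a nonzero $\mu\in\aa\cc(\Gamma)$ annihilated on both lines, I would fix such an orbit away from the fixed points, take disjoint small intervals $J_0,\dots,J_3$ around its points that are interchanged by $\Phi_1,\Phi_2$, prescribe an arbitrary nonzero bump for $f_\mu$ on $J_0$, propagate it to $J_1,J_2,J_3$ by the transport rule \eqref{eq:symf} (which writes $f_\mu$ on the image as a smooth nonvanishing multiple of $f_\mu$ on the source, with a sign), and set $f_\mu=0$ elsewhere. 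The essential point to verify is consistency around the loop: the four sign factors multiply to $(-1)^4=1$ and the four Jacobian factors multiply to $1$ because the composite transport is the identity (equivalently, \eqref{eq:symintsign} applied around the closed $4$-cycle closes up). The resulting $f_\mu$ is bounded with compact support, hence in $L^1(I)$, is nonzero, and satisfies \eqref{eq:symf} for both $\theta_1$ and $\theta_2$; by Corollary \ref{cor:2} this is a nonzero measure with $\widehat\mu=0$ on $\ell_{\theta_1}\cup\ell_{\theta_2}$. This gives the ``only if'' direction and completes the equivalence.
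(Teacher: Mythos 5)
Your proof is correct, and at the strategic level it follows the same route as the paper: both transfer the problem to the unit circle by the projective map $(u,v)\mapsto(u/v,1/v)$, both observe that chords orthogonal to $\theta$ become chords through the pole $(-\tan\theta,0)$, and both conclude via Lemma \ref{lem:wandering} in the generic case and an explicit transport construction in the degenerate case. Where you genuinely differ is in how the dichotomy is established. The paper argues qualitatively: it shows that the angle between the real axis and the line joining the pole $A$ to the iterates $\widetilde{\Phi}^k(\alpha)$ is strictly monotone along orbits unless the two poles are inverse points with respect to the circle, and deduces wandering intervals. You instead linearize: in the stereographic coordinate each chord involution is $t\mapsto k_\theta/t$ with $k_\theta=\frac{\tan\theta+1}{\tan\theta-1}$ (your formula, which I confirm), so $\Phi$ is exactly $t\mapsto\lambda t$ with $\lambda=k_{\theta_2}/k_{\theta_1}$. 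This buys an exact, quantitative criterion — $|\lambda|\neq1$ gives wandering intervals covering both branches, $\lambda=-1$ gives a Klein four-group of symmetries and hence a counterexample — and makes the dynamics completely transparent, at the price of a computation the paper avoids. Your counterexample construction in the degenerate case is the same in substance as the paper's (propagation of a bump around the $4$-cycles), and your consistency argument is sound: the composite transport around the cycle is push-forward by $\Phi^2=\mathrm{id}$ with sign $(-1)^4=1$, so it closes up.

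One point must be flagged. Your computation locates the exceptional pairs at $\tan\theta_1\tan\theta_2=1$, equivalently $\theta_1+\theta_2=\pm\pi/2$, and you then call this ``the excluded relation $\theta_1\perp\theta_2$ of the statement''. Euclidean perpendicularity is $\theta_1-\theta_2=\pm\pi/2$, i.e. $\tan\theta_1\tan\theta_2=-1$, which is a \emph{different} set of pairs. What your condition expresses is that $\ell_{\theta_1}$ and $\ell_{\theta_2}$ are conjugate diameters of the hyperbola — orthogonal for the quadratic form $v^2-u^2$, equivalently mirror images across an asymptote, equivalently poles $(-\tan\theta_1,0)$ and $(-\tan\theta_2,0)$ inverse with respect to the circle. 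This is exactly the relation the paper's own proof isolates (its critical case is $-\tan\theta_2=-1/\tan\theta_1$), so your result agrees with what the paper actually proves; indeed, under the Euclidean reading of $\perp$ the theorem would contradict its own first assertion, since $\ell_{\pi/4}$ and $\ell_{-\pi/4}$ are Euclidean-perpendicular yet form a HUP with $\Gamma$ — a tension your parenthetical remark about the pair $\pm\pi/4$ implicitly detects. So the proof stands, but you should say explicitly that the exceptional relation is conjugacy (Minkowski orthogonality), not Euclidean perpendicularity, instead of silently identifying the two; as written, that one sentence asserts a false equivalence.
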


\begin{remark}
In \cite{HMR}, the authors give a necessary and sufficient conditions for a lattice cross $\Lambda$ in $\ell_{+\pi/4}\cup\ell_{-\pi/4}$
to form a Heisenberg uniqueness pair $(\Gamma,\Lambda)$.
\end{remark}

\begin{proof} If $\theta=\pm\dst\frac{\pi}{4}$ than any line orthogonal to $\theta$ intersects $\Gamma$ in at most one point.
That is, in the notation of Corollary \ref{cor:2} $I_0=I$ and the theorem follows.

\setlength{\unitlength}{0.35cm}
\begin{picture}(34,16)
\put(8,0){\vector(0,1){16}}
\put(8.2,15.5){$v$}
\put(0,8){\vector(1,0){16}}
\put(15.5,8.2){$u$}
\dottedline{0.2}(-0.1,16)(16,0)
\dottedline{0.2}(0,0)(16.1,16)
\qbezier[400](0,16)(5,10.5)(8,10.5)
\qbezier[400](16,16)(11,10.5)(8,10.5)
\qbezier[400](0,0)(5,4.5)(8,4.5)
\qbezier[400](16,0)(11,4.5)(8,4.5)
\dottedline{0.2}(10.1,11.2)(2.6,13.5)
\put(10.1,10.5){$\scriptstyle U=\gamma(s)$}
\put(2.6,13.7){$\scriptstyle V=\gamma(\Phi_1(s))$}
\dottedline{0.05}(8,11.8)(8.6,13.9)
\dottedline{0.05}(8.7,13.3)(8.6,13.9)
\dottedline{0.05}(8.2,13.5)(8.6,13.9)
\put(8.9,13.7){$\theta_1$}
\put(16,9){\vector(1,0){2}}
\put(16.8,9.2){$T$}
\put(26,0){\vector(0,1){16}}
\put(18,8){\vector(1,0){16}}
\put(15.5,8.2){$u$}
\qbezier[400](22,8)(22.2,11.8)(26,12)
\qbezier[400](22,8)(22.2,4.2)(26,4)
\qbezier[400](30,8)(29.8,4.2)(26,4)
\qbezier[400](30,8)(29.8,11.8)(26,12)
\dottedline{0.1}(19,8)(28.6,11.2)
\put(28.8,11.3){$\scriptstyle T(U)$}
\put(20.3,9.1){$\scriptstyle T(V)$}
\put(18,7.2){$\scriptstyle(-\tan\theta_1,0)$}
\end{picture}

Let $\Phi_j=\Phi_{\theta_j}$, $j=1,2$  be the maps defined in Section \ref{sec:general}.
Consider the transformation\footnote{This transformation has a natural interpretation in projective geometry.} $T:(u,v)\to (u/v,1/v)$. 
Notice that $T$ is a one-to-one map from $\Gamma$ onto the circle  $\T_*=\S^1\backslash \{(-1,0),(1,0)\}$ and moreover the
image of any line orthogonal to $\theta$ is a line $L$ through the point $(-\tan \theta,0)$.\\

Let $\widetilde{\Phi}_j:\T_*\to \T_*$ be a map defined as follows: $\widetilde{\Phi}_j(\alpha)$ is the unique 
$\beta\in \T_*\backslash\{\alpha\}$ such that the line $L_{(\alpha,\beta)}$ joining $\alpha$ and $\beta$ contains the point $(-\tan \theta_j,0)$. Note that 
$$
T(\gamma(\Phi_j(s)))=\widetilde{\Phi}_j(T(\gamma(s))),\qquad j=1,2.
$$
This transformation allows to transfer the dynamical system generated by $\Phi=\Phi_1\circ\Phi_2$ on $\Gamma$ to a new dynamical system on $\T_*$
generated by
$\widetilde{\Phi}=\widetilde{\Phi}_1\circ\widetilde{\Phi}_2$. 
In particular we will cover $\T_*$ by wandering sets for $\widetilde{\Phi}$.  As  $T$ is a bijection, $I$ will thus be covered
by wandering sets for $\Phi$. The theorem then follows from Lemma \ref{lem:wandering}.

We distinguish two cases:

\begin{center}
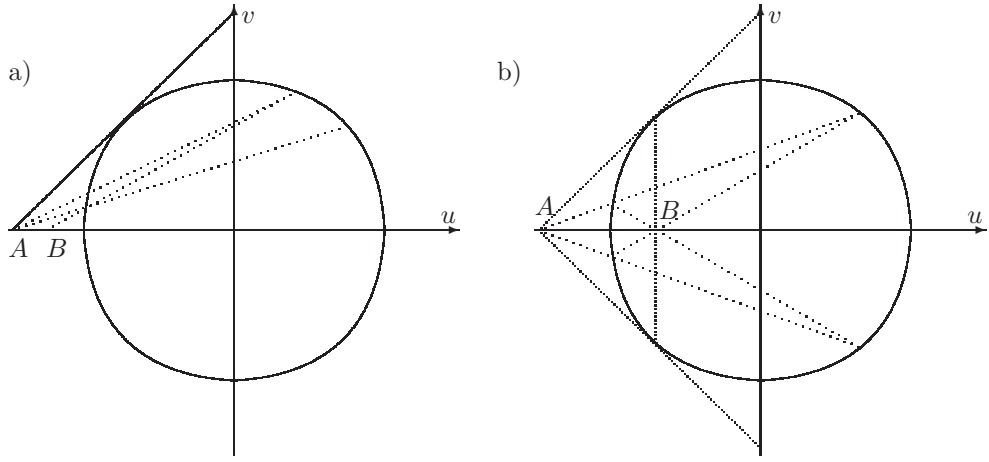
\begin{figure}[!ht]
\setlength{\unitlength}{0.5cm}
\begin{picture}(26,12)
\put(6,0){\vector(0,1){12}}
\put(6.2,11.5){$v$}
\put(0,6){\vector(1,0){12}}
\put(11.5,6.2){$u$}
\put(0,10){a)}
\put(13,10){b)}
\qbezier[400](2,6)(2.2,9.8)(6,10)
\qbezier[400](2,6)(2.2,2.2)(6,2)
\qbezier[400](10,6)(9.8,9.8)(6,10)
\qbezier[400](10,6)(9.8,2.2)(6,2)
\dottedline{0.02}(0.1,6)(6,11.8)
\dottedline{0.2}(0.1,6)(7.5,9.6)
\dottedline{0.2}(1,6)(7.5,9.6)
\dottedline{0.2}(0.1,6)(8.8,8.7)
\put(0,5.3){$A$}
\put(1,5.3){$B$}
\qbezier[400](16,6)(16.2,9.8)(20,10)
\qbezier[400](16,6)(16.2,2.2)(20,2)
\qbezier[400](24,6)(23.8,9.8)(20,10)
\qbezier[400](24,6)(23.8,2.2)(20,2)
\put(20,0){\vector(0,1){12}}
\put(20.2,11.5){$v$}
\put(14,6){\vector(1,0){12}}
\put(25.5,6.2){$u$}

\dottedline{0.1}(14.1,6)(20,11.8)
\dottedline{0.1}(14.1,6)(20,0.2)
\put(14,6.4){$A$}
\dottedline{0.1}(17.2,9)(17.2,3)
\put(17.3,6.3){$B$}

\dottedline{0.2}(14.1,6)(22.6,9.1)
\dottedline{0.2}(14.1,6)(22.6,2.9)
\dottedline{0.2}(16.2,6.6)(22.6,2.9)
\dottedline{0.2}(16.2,5.4)(22.6,9.1)
\end{picture}
\caption{a) $\theta_1>\pi/4$ and
b) $0<\theta_1<\pi/4$ ,$\theta_2=\theta_1-\frac{\pi}{2}$}
\end{figure}
\end{center}

\smallskip

\noindent{\bf First case.} $\theta_1\not\in (-\pi/4,\pi/4)$\\

\smallskip

Using the invariance property (Inv2), we assume without loss of generality  that $\theta_1\in(\pi/4,\pi/2)$
and that $\theta_2<\theta_1$. Thus $\tan \theta_1>1$ and $\tan \theta_2<\tan\theta_2$.
Let $A=(-\tan\theta_1,0)$ and note that, since $|\tan \theta_1|>1$, $A$ is in the ``exterior'' of $\T_*$. Let 
$$
\T_\pm=\{z\in \T_*\text{ : }\pm \Im z>0\}
$$
and $\alpha_\pm\in \T_\pm$ the unique point $\alpha\in \T_\pm$ such that the line $L_{(\alpha,A)}$ is tangent to $\T$. Note that $\Re \alpha_\pm =-1/\tan 
\theta_1$. For $\epsilon_1,\epsilon_2=\pm 1$, let 
$$
\T_{\epsilon_1,\epsilon_2}=\{z\in \T_{\epsilon_2}\text{ : }\epsilon_1\Re z>-1/\tan\theta_1\}.
$$
Note that $\widetilde{\Phi}_1$ is a one-to-one from $\T_{\epsilon_1,\epsilon_2}$
onto  $\T_{-\epsilon_1,\epsilon_2}$ while $\widetilde{\Phi}_2$ is a one-to-one from $\T_{\epsilon}$ onto  $\T_{-\epsilon}$.

We need the following observation. Let $\alpha \in \T_{-,+}$ and $\beta =\widetilde{\Phi}(\alpha)$ and let $\varphi$ ---resp. $\psi$--- be the angle between the real axis and the line $L_{(A,\alpha)}$ ---resp. $L_{(B,\alpha)}$. 
\begin{itemize}
\item If $-\tan \theta_1<-\tan \theta_2<-1/\tan \theta_1$,  then $|\varphi|>|\psi|$;

\item  if   $-\tan \theta_2=-1/\tan \theta_1$, hence $\theta_1\perp\theta_2$  and then  $\varphi=-\psi$; 

\item if  $\tan\theta_2>-1/\tan\theta_1$ then $|\varphi|<|\psi|$. 
\end{itemize}

So if $\theta_1\not\perp\theta_2$, then  the absolute value of the angles between the real axis and $L_{(A,\widetilde{\Phi}^k(\alpha))}$ is strictly monotonic. 
Therefore $[\alpha,\widetilde{\Phi}(\alpha)]$ is wandering.

\smallskip

If  $\theta_1\perp\theta_2$ then $\widetilde{\Phi}^k(\alpha)$ is $2$--periodic. Since $\theta_1\in(\pi/4,\pi/2)$, we can define 
$$
x_{\theta_1}=\frac{1}{\sqrt{\sin^2\theta_1-\cos^2\theta_1}}(-\cos\theta_1,\sin\theta_1)\in\Gamma.
$$
Note that this point is defined by $\alpha_+=T(x_{\theta_1})$. Let $s_0\in[0,1/2)$ be such that $x_{\theta_1}=\gamma(s_0)$
and $\Gamma_0=\{\gamma(s),\ s\in(0,s_0]\}$. Let $f$ be any function $f\in L^1(\Gamma_0)$. We will now extend $f$ to $L^1(\Gamma)$ as follows:

-- first, for $s_+^1\in(s_0,1/2)$, there is a unique $s_-^1\in(0,s_0)$ such that the line joining $\gamma(s_-^1)$ to $\gamma(s_+^1)$ 
is orthogonal to $\theta_1$ and we define $f(s_+^1)$ via Equation \eqref{eq:symf} \emph{for $\theta_1$}:
\begin{equation}
\label{eq:symhyp1}
\frac{f(s_+^1)}{\pi_{\theta_1}\gamma'(s_+^1)}=\frac{f(s_-^1)}{\pi_{\theta_1}\gamma'(s_-^1)}.
\end{equation}

-- Next, for every $s_+^2\in (1/2,1)$ there is a unique $s_-^2\in(0,1/2)$ such that the line joining $\gamma(s_-^2)$ to $\gamma(s_+^2)$ 
is orthogonal to $\theta_2$ and we define $f(s_+^2)$ via Equation \eqref{eq:symf} \emph{for $\theta_2$}:
\begin{equation}
\label{eq:symhyp2}
\frac{f(s_+^2)}{\pi_{\theta_2}\gamma'(s_+^2)}=\frac{f(s_-^2)}{\pi_{\theta_2}\gamma'(s_-^2)}.
\end{equation}
We will denote by $s_1=(s_0)_+^2$.

-- Finally, for $s_+^1\in(1/2,s_1)$, there is a unique $s_-^1\in(s_1,1/2)$ such that the line joining $\gamma(s_-^1)$ to $\gamma(s_+^1)$ 
is orthogonal to $\theta_1$ and one easily checks that \eqref{eq:symhyp1} is satisfied.

Let $\mu=f\,\mbox{d}s\in\aa\cc(\Gamma)$. According to Corollary \ref{cor:2}, $\widehat{\mu}=0$ on $\ell_{\theta_1}\cup\ell_{\theta_2}$.
Moreover, every $\mu\in\aa\cc(\Gamma)$ such that $\widehat{\mu}=0$ on $\ell_{\theta_1}\cup\ell_{\theta_2}$ can be constructed this way.

\smallskip

{\bf Second case.} $\theta_1,\theta_2\in (-\pi/4,\pi/4)$.

\smallskip

Without loss of generality, we assume that $\theta_2<\theta_1$ thus $-1<\tan \theta_2<\tan \theta_1<1$.   Let 
$$
\{\alpha_{\pm}\}=\{z\in \T_{\pm}\text{ : } \Re z=-\tan \theta_1\}
$$
be two point at the vertical of $A$ and define $\T_{\pm,\pm}$ as previously.
Let $\alpha\in \T_{+,+}$ and let $\varphi$ (resp. $\psi$) be the angle of $L_{(\alpha,A)}$ (resp. $L_{(\widetilde{\Phi}(\alpha),A)}$)  with the real 
axis, then $\psi>\varphi$. Again this implies that $[\alpha,\varphi(\alpha)]$ is wandering. 
The cases $\alpha\in\T_{+,-}$, $\T_{-,+}$, $\T_{-,-}$ are similar.

\end{proof}

\subsection{Closed convex curves with a corner point}
Let $\Gamma=\{\gamma(s),s\in[0,1]\}$ be a closed \emph{convex} curve and assume that $\gamma$ is piecewise smooth $1$-periodic and that $\gamma'$
has a jump singularity at $0$ {\it i.e.} $\Gamma$ has a corner point at $\gamma(0)$. Without loss of generality $\gamma(0)=0$
and let $\gamma^{\prime}(0^\pm)$ the vectors defining the two half tangents to $\Gamma$ at $0$. 
Let $H_0$ be a supporting hyperplane of $\Gamma$ at $0$. As $0$ is a corner point of $\Gamma$, this supporting 
line is not unique
and we may assume that $H_0\cap\Gamma=\{0\}$. Up to a rotation, we may assume that $H_0$ is the vertical axis.
Up to a symmetry, we may also assume that $\gamma$ covers $\Gamma$ in counter-clockwise order.

The fact that $0$ is a corner point implies that $H_0$ and $\gamma^{\prime}(0^\pm)$
define two positive open cones $\cc_\pm$ with $\cc_+$ in the upper half right quadrant and $\cc_-$ in the lower half right quadrant.
Let $\cc^*_\pm$ be the dual cone of $\cc_\pm$ ({\it i.e.} $\theta^\perp\in\cc_+$ if and only if $\theta\in\cc_+^*$).

Let $\theta_1,\theta_2\in \cc_-^*\cup\cc_+^*$ and assume that $\Gamma$ does not contain a face normal to $\theta_1$ nor to $\theta_2$, so that $\Gamma,\theta_i$
satisfy the hypothesis of Corollary \ref{cor:2}. We will first treat the case $\theta_1\in\cc_+^*$ and $\theta_2\in\cc_-^*$
and $\theta_1<\theta_2\in \cc_+^*$. The case $\theta_1\not=\theta_2\in\cc_-^*$ is obtained by a symmetry with respect to the horizontal axis.

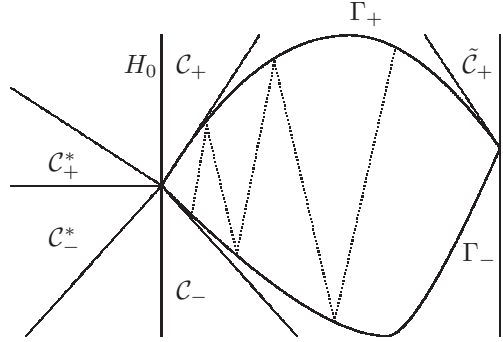
\begin{figure}[!ht]
\begin{center}
\setlength{\unitlength}{1cm}
\begin{picture}(7,5)
   \qbezier(2,2)(3.2,4)(4.5,4)
\qbezier(2,2)(4,0)(5,0)
   \qbezier(4.5,4)(5.4,4)(6.5,2.5)
\qbezier(5,0)(5.4,0)(6.5,2.5)

\dottedline{0.02}(0,2)(2,2)
\dottedline{0.02}(2,0)(2,4)
\put(1.5,3.5){$H_0$}
\dottedline{0.02}(2,2)(3.3,4)
\put(2.2,3.5){$\cc_+$}
\dottedline{0.02}(0,3.3)(2,2)
\put(0.5,2.2){$\cc_+^*$}

\dottedline{0.02}(2,2)(3.8,0)
\put(2.2,0.5){$\cc_-$}
\dottedline{0.02}(2,2)(0.2,0)
\put(0.5,1.2){$\cc_-^*$}

\dottedline{0.02}(6.5,0)(6.5,4)
\dottedline{0.02}(6.5,2.5)(5.5,4)
\put(6,3.5){$\tilde\cc_+$}

\put(4.5,4.2){$\Gamma_+$}
\put(6,1){$\Gamma_-$}
    \dottedline{0.05}(5.1,3.8)(4.3,0.2)
    \dottedline{0.05}(4.3,0.2)(3.5,3.65)
    \dottedline{0.05}(3.5,3.65)(3,1.1)
    \dottedline{0.05}(3,1.1)(2.6,2.8)
    \dottedline{0.05}(2.6,2.8)(2.4,1.6)
    \end{picture}
\caption{Closed curve with a corner point}
\end{center}
\end{figure}

Now, there is a
unique $s\in(0,1)$ that we denote by $s_*$ such that the line through $\gamma(s)$ directed by $\theta_1^\perp$ is a supporting line for $\Gamma$.
Define $\Gamma_+=\{\gamma(s),s\in(0,s_*)\}$ and $\Gamma_-=\{\gamma(s),s\in(s_*,1)\}$.
Observe that, due to the convexity of $\Gamma$, every line issued from a point $A\in\Gamma_+$ directed by $\theta_1^\perp$
will intersect $\Gamma$ again in a point $B\in\Gamma_-$. Further, a line through $B$ directed by $\theta_2^\perp$
will then intersect $\Gamma$ again in a point $C\in\Gamma_+$. The assumption on the angles imply that we go from $A$ to $C$ along $\Gamma$
clockwise. In the language of Section \ref{sec:general}, the mapping $\Phi$ is strictly decreasing on $(0,s_*)$. But then, for every
$s\in(0,s_*)$ the interval $[\Phi(s),s]$ is wandering for $\Phi$.
According to Lemma \ref{lem:wandering}, if $\mu\in\aa\cc(\Gamma)$ is such that $\widehat{\mu}=0$
on $\ell_{\theta_1}\cup\ell_{\theta_2}$, then $f_\mu=0$ on $[\Phi(s),s]$. As $s$ is arbitrary in $(0,s_*)$, $f_\mu=0$
on $(0,s_*)$. Using the fact that $\Phi$ is strictly increasing on $(s_*,1)$ we obtain that $f_\mu=0$ on $(s_*,1)$ as well.

If $\theta_1,\theta_2$ are both in $\cc_+$, a slight adaptation is needed. Without loss of generality, assume that $\theta_1<\theta_2$.
Then the same geometric argument shows that the map $\Phi$ is still strictly
decreasing and again $f_\mu=0$ on $(0,s_*)$. Let $s_*$ be defined $\bigl(\gamma(1/2)+\R\theta_-\bigr)\cap\Gamma=\{\gamma(1/2),\gamma(s_*)\}$
(note that we might have $s_*=1/2$). Corollary \ref{cor:2}-\eqref{eq:symf} for $\theta_-$ shows that $f_\mu=0$ on $(s_*,1)$.

We have thus proved the following:

\begin{proposition}
\label{prop:poly}
With the above notation, if $\theta_1\in\cc_+^*$ and $\theta_2\in\cc_-^*$. 
Then $(\Gamma,\ell_{\theta_1}\cup\ell_{\theta_2})$ is a Heisenberg Uniqueness Pair.
\end{proposition}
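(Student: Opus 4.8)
The plan is to recast the vanishing of $\widehat{\mu}$ on $\ell_{\theta_1}\cup\ell_{\theta_2}$ as an invariance of $\mu$ under the dynamical system of Section~\ref{sec:general}, and then to cover $\Gamma$ (minus the corner) by wandering intervals so that Lemma~\ref{lem:wandering} forces $f_\mu=0$. Fix $\mu\in\aa\cc(\Gamma)$ with $\widehat{\mu}=0$ on $\ell_{\theta_1}\cup\ell_{\theta_2}$. Since $\Gamma$ has no face normal to $\theta_1$ or $\theta_2$, each projection $\pi_{\theta_i}\gamma$ has a single maximum and a single minimum, so Corollary~\ref{cor:2} applies to both $\theta_i$ and produces the chord maps $\Phi_1,\Phi_2$ and hence $\Phi=\Phi_2\circ\Phi_1$. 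Cutting $\Gamma$ at the corner $\gamma(0)$ and at the second $\theta_1$-extreme point $\gamma(s_*)$ yields the two arcs $\Gamma_+=\gamma\bigl((0,s_*)\bigr)$ and $\Gamma_-=\gamma\bigl((s_*,1)\bigr)$.

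The crux, and the step I expect to be the main obstacle, is a purely geometric monotonicity property of $\Phi$ on $\Gamma_+$. For $A=\gamma(s)$ with $s\in(0,s_*)$, convexity of $\Gamma$ guarantees that the chord through $A$ directed by $\theta_1^\perp$ meets $\Gamma$ again at a point $B=\gamma\bigl(\Phi_1(s)\bigr)\in\Gamma_-$, and that the chord through $B$ directed by $\theta_2^\perp$ meets $\Gamma$ again at $C=\gamma\bigl(\Phi(s)\bigr)\in\Gamma_+$, so that $\Phi$ maps $(0,s_*)$ into itself. I must show that the hypotheses $\theta_1\in\cc_+^*$ and $\theta_2\in\cc_-^*$ (equivalently $\theta_1^\perp\in\cc_+$, $\theta_2^\perp\in\cc_-$) force $C$ to lie strictly clockwise from $A$ on $\Gamma$, i.e. $\Phi(s)<s$ for every $s\in(0,s_*)$; this is exactly where the positions of the two cones relative to the half-tangents $\gamma'(0^\pm)$ at the corner enter, and it is the only delicate point. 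Together with the fact that $\Phi$, being the composition of two orientation-reversing chord maps, is orientation preserving, this makes $\{\Phi^n(s)\}_{n\ge0}$ a strictly decreasing sequence in $(0,s_*)$.

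Granting the monotonicity the rest is routine. Fix $s\in(0,s_*)$ and set $J=[\Phi(s),s]$; by orientation preservation $\Phi^n(J)=[\Phi^{n+1}(s),\Phi^n(s)]$, and since $\Phi^n(s)$ is strictly decreasing these intervals are pairwise disjoint up to endpoints, so $J$ is wandering for $\Phi$. Lemma~\ref{lem:wandering} then gives $f_\mu=0$ on $J$, and as every point of $(0,s_*)$ lies in some such interval we conclude $f_\mu=0$ on $(0,s_*)$. The same analysis on $\Gamma_-$, where the symmetric geometric argument yields $\Phi(s)>s$ so that orbits increase toward $\gamma(1)=\gamma(0)$, gives $f_\mu=0$ on $(s_*,1)$. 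Hence $f_\mu=0$ almost everywhere on $[0,1]$, that is $\mu=0$, which proves the proposition.
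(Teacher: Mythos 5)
Your proposal reproduces the paper's proof scheme exactly: cut $\Gamma$ at the corner and at the second $\theta_1$-extremal point $s_*$, establish that orbits of $\Phi=\Phi_2\circ\Phi_1$ move strictly monotonically along each arc, deduce that the intervals $[\Phi(s),s]$ are wandering, and conclude with Lemma \ref{lem:wandering} and a covering argument. The problem is that the one assertion you explicitly decline to prove --- that $\theta_1\in\cc_+^*$ and $\theta_2\in\cc_-^*$ force $\Phi(s)<s$ on $(0,s_*)$ --- is not a technical detail but the entire content of the proposition. Everything you do prove is generic machinery from Section \ref{sec:general}, valid for any curve and any admissible pair of angles; yet the conclusion is false without the cone hypothesis (for a circle and $\theta_2-\theta_1$ a rational multiple of $\pi$, $\Phi$ is a rational rotation, every orbit is periodic, no interval wanders, and the pair is not a Heisenberg Uniqueness Pair, cf.\ Theorem \ref{th:circle}). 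So ``granting the monotonicity'' amounts to granting the theorem: the gap sits exactly where the corner and the dual cones must be used.

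The missing step can be supplied as follows, and this is in substance the paper's one-line geometric argument. Since $\theta_1^\perp\in\cc_+$, the direction $\theta_1^\perp$ lies strictly between the supporting line $H_0$ and a half-tangent at the corner, hence outside the tangent cone of the convex region at $\gamma(0)$; therefore the line through $\gamma(0)$ with direction $\theta_1^\perp$ meets $\Gamma$ only at the corner, i.e.\ it is one of the two supporting lines of $\Gamma$ in that direction. Consequently $\Phi_1(0)=0$, and likewise $\Phi_2(0)=0$ because $\theta_2^\perp\in\cc_-$, so $\Phi(0)=0$. Next, the fixed points of $\Phi$ are contained in $\{0,s_*\}$: if $\Phi_2\bigl(\Phi_1(s)\bigr)=s$ with $\Phi_1(s)\neq s$, the chord joining $\gamma(s)$ to $\gamma\bigl(\Phi_1(s)\bigr)$ would have direction $\theta_1^\perp$ and direction $\theta_2^\perp$ simultaneously, which is absurd; hence any fixed point of $\Phi$ satisfies $\Phi_1(s)=s$, and the fixed points of $\Phi_1$ are exactly $0$ and $s_*$ (this is where ``no face normal to $\theta_1$'' enters). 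Since $\Phi$ is an orientation-preserving homeomorphism of the closed curve (a composition of two orientation-reversing chord maps), its displacement $\Phi(s)-s$, read in a lift, is continuous and non-vanishing off the finite set of fixed points, hence of constant sign on each complementary arc; this is the monotonicity you assumed. Note that the actual sign is irrelevant: constant sign on each arc already makes every interval with endpoints $s$ and $\Phi(s)$ wandering, and your covering argument then gives $f_\mu=0$ off a finite set, i.e.\ $\mu=0$. With this paragraph inserted, your proof is complete and coincides with the paper's.
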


\begin{remark}
In the case $\gamma(1/2)$ is also a corner point, the previous proof may easily be extended to prove the following: 
Let again $\cc_\pm$ be the previous cones and define $\tilde \cc_\pm$ to be the analogous cones for $\gamma(1/2)$,
translated to have there summit at the origin. $\tilde\cc_+$ is in the upper left quadrant and $\tilde\cc_-$
in the lower left one.

Then, if $\theta_1,\theta_2$ are in $\cc_+\cup\cc_-\cup\tilde\cc_+\cup\tilde\cc_-$, 
$(\Gamma,\ell_{\theta_1}\cup\ell_{\theta_2})$ is a Heisenberg Uniqueness Pair.
\end{remark}

\begin{example} If $\Gamma$ is a convex polygon, then for almost every $\theta_1$,
there is an open interval of $\theta_2$'s such that $(\Gamma,\ell_{\theta_1}\cup\ell_{\theta_2})$ is a Heisenberg Uniqueness Pair.
In the case of a regular $n$-gon, this interval has length $\pi/n$.
\end{example}

%
%

\section{Heisenberg Uniqueness Pairs and rotation numbers}

\subsection{The ellipse revisited}
\label{sec:ellipse}

Let $\Gamma$ be an ellipse. According to the invariance properties (Inv1)-(Inv2) there is no loss of generality in assuming that
$\Gamma$ is the circle centered at $0$ of radius $1$, $\Gamma=\{\gamma(t)=(\cos 2\pi t,\sin 2\pi t),t\in [0,1)\}$.\footnote{This parametrization has been chosen
to be coherent with the usual definition of rotation numbers in the next section.}
Let $\theta_1\not=\theta_2$ be two angles. Without loss of generality $\theta_1=0$ and $\theta_2\in[0,\pi)$.
Let $\Phi_1,\Phi_2$ be the maps associated to them as in Section \ref{sec:general}.
It is easy to see that $\Phi_j$ is the orthogonal symmetry with respect to the line through $0$ directed by $\theta_j$,
in particular $\Phi_1(s)=-s\,\mbox{mod}\,1$ while $\Phi_2(s)=\frac{\theta_2}{\pi}-s\,\mbox{mod}\,1$.
Throughout the remaining of Section \ref{sec:ellipse} all functions on $\Gamma$ will be lifted as $1$-periodic functions on $\R$.

Then, according to Corollary \ref{cor:2}, $\mu\in\aa\cc(\Gamma)$ is such that $\widehat{\mu}=0$ on $\ell_0\cup\ell_{\theta_2}$
if and only if $f_\mu(-s)=-f_\mu(s)$ and $f_\mu(\theta_2/\pi-s)=-f_\mu(s)$. In particular, $f_\mu(\theta_2/\pi+s)=f_\mu(s)$.
Note that conversely, if $f_\mu(\theta_2/\pi+s)=f_\mu(s)$ and $f_\mu(-s)=-f_\mu(s)$, then $f_\mu(\theta_2/\pi-s)=-f_\mu(s)$.

According to Lemma \ref{lem:rot}, $(\Gamma,\ell_{\theta_1}\cup\ell_{\theta_2})$ is a Heisenberg Uniqueness Pair
if $\theta_2\notin\Q$. Assume now that $\dst\frac{\theta_2}{\pi}=\frac{p}{q}$, $p,q\in\N$, $p,q$ coprime.
Then every integer $j$ may be written in the form $j=kp+\ell q$ so that, if $f_\mu$ is both
$1$-periodic and $p/q$-periodic, then $f_\mu(s+j/q)=f_\mu(s+kp/q+\ell)=f_\mu(s+kp/q)=f_\mu(s)$ {\it i.e.}
$f_\mu$ is also $1/q$-periodic. The converse is trivial. Thus $\widehat{\mu}=0$ on $\ell_0\cup\ell_{\theta_2}$
if and only if $f_\mu$ is both odd and $1/q$-periodic. Such functions are all constructed in the
following way: take any $f_\mu$ on $(0,1/2q)$, extend it into an odd function on $(-1/2q,1/2q)$
and then to a $1/q$-periodic function on $\R$ (thus also to a $1$-periodic function).

This gives a more geometric and constructive proof of the following result:

\begin{theorem}[Lev \cite{Le} and Sjolin \cite{Sjo1}]
\label{th:circle}
Let $\Gamma$ be a circle and let $\theta_1,\theta_2\in\R$
Then $(\Gamma,\ell_{\theta_1}\cup\ell_{\theta_2})$ is a Heisenberg Uniqueness Pair if and only if $\dst\frac{1}{\pi}(\theta_2-\theta_1)\notin\Q$ .
\end{theorem}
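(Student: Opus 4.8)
The plan is to build directly on the explicit description of $\Phi_1,\Phi_2$ and on the characterization obtained just before the statement, rather than on any new computation. First I would invoke the invariance property (Inv2) (a rotation of the plane) to reduce to the normalized situation $\theta_1=0$ and $\theta_2\in[0,\pi)$, so that the arithmetic condition $\frac{1}{\pi}(\theta_2-\theta_1)\notin\Q$ becomes simply $\theta_2/\pi\notin\Q$. In this normalization one has $\Phi_1(s)=-s$ and $\Phi_2(s)=\theta_2/\pi-s$ modulo $1$, whence $\Phi=\Phi_2\circ\Phi_1$ is exactly the rotation $R_{\theta_2/\pi}\colon s\mapsto s+\theta_2/\pi\ \bmod 1$.

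For the sufficiency (``if'') direction, I would assume $\theta_2/\pi\notin\Q$. Then $\Phi$ is an irrational rotation, conjugate to itself via the identity map $h=\mathrm{id}$, which is trivially a $\cc^1$-diffeomorphism of the circle. Lemma \ref{lem:rot} then applies verbatim and yields that $(\Gamma,\ell_{\theta_1}\cup\ell_{\theta_2})$ is a Heisenberg Uniqueness Pair. This is the only point where ergodicity (through Birkhoff's theorem inside Lemma \ref{lem:rot}) enters.

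For the necessity (``only if'') direction, I would assume $\theta_2/\pi=p/q$ with $p,q$ coprime and exhibit a nonzero $\mu\in\aa\cc(\Gamma)$ annihilated on the two lines. By Corollary \ref{cor:2}, $\widehat{\mu}=0$ on $\ell_0\cup\ell_{\theta_2}$ is equivalent to $f_\mu$ being odd and $\theta_2/\pi$-periodic; as recorded just before the statement, coprimality of $p,q$ upgrades $\theta_2/\pi$-periodicity together with $1$-periodicity to $1/q$-periodicity, and the converse implication is immediate. Such functions are produced concretely: pick any nonzero $f\in L^1(0,1/2q)$, extend it to an odd function on $(-1/2q,1/2q)$, then propagate it $1/q$-periodically to $\R$; call the result $f_\mu$. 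The resulting $f_\mu$ is $L^1$, nonzero, and satisfies both symmetry relations, so $\mu=f_\mu\,\mbox{d}s$ is the desired counterexample.

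The genuine content is entirely carried by the two ingredients already available --- Lemma \ref{lem:rot} in the irrational case and Corollary \ref{cor:2} in the rational case --- so the main (and rather mild) obstacle is bookkeeping: checking that the reduction via (Inv2) preserves the arithmetic condition, that $\Phi$ is honestly the stated rotation modulo $1$, and that the explicit odd, $1/q$-periodic function is genuinely nonzero in $L^1$ (the only forced vanishing being at the isolated points $k/2q$, a null set). Once these are verified, combining the two cases gives the stated equivalence.
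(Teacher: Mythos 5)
Your proposal is correct and follows essentially the same route as the paper: normalize via (Inv2) to $\theta_1=0$, identify $\Phi=\Phi_2\circ\Phi_1$ as the rotation $s\mapsto s+\theta_2/\pi \bmod 1$, apply Lemma \ref{lem:rot} (with $h=\mathrm{id}$) in the irrational case, and in the rational case use the equivalence from Corollary \ref{cor:2} (oddness plus $\theta_2/\pi$-periodicity, upgraded to $1/q$-periodicity by coprimality) to build nonzero annihilated measures. The only cosmetic difference is that you make explicit the trivial conjugation $h=\mathrm{id}$ and the nonvanishing check of the constructed $f_\mu$, which the paper leaves implicit.
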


For a general ellipse the condition is a bit more complicated. First let $a$ and $b$ the major and minor semi-axes of the ellipse,
so that, if we denote by $L(x,y)=(x,ay/b)$ then there is a rotation $R_\theta$ such that $LR_\theta\Gamma$
is a circle $\cc$ of radius $a$. According to the invariance properties, $(\Gamma,\ell_{\theta_1}\cup\ell_{\theta_2})$
is a Heisenberg uniqueness pair if and only if $(\cc,\ell_{\ffi_1}\cup\ell_{\ffi_2})$
with $\ell_{\ffi_j}=(R_\theta^{-1})^*(L^{-1})^*\ell_{\theta_j}$. It follows that $\ffi_2-\ffi_1=\arcsin\frac{b\sin\theta_2}{\sqrt{a^2+b^2}}-\arcsin\frac{b\sin\theta_1}{\sqrt{a^2+b^2}}$.

\begin{figure}[!ht]
\begin{center}
\setlength{\unitlength}{1cm}
\begin{picture}(6,3)
  \qbezier(5.8,2.0)(5.8,2.3728)(4.9799,2.6364)
  \qbezier(4.9799,2.6364)(4.1598,2.9)(3.0,2.9)
  \qbezier(3.0,2.9)(1.8402,2.9)(1.0201,2.6364)
  \qbezier(1.0201,2.6364)(0.2,2.3728)(0.2,2.0)
  \qbezier(0.2,2.0)(0.2,1.6272)(1.0201,1.3636)
  \qbezier(1.0201,1.3636)(1.8402,1.1)(3.0,1.1)
  \qbezier(3.0,1.1)(4.1598,1.1)(4.9799,1.3636)
  \qbezier(4.9799,1.3636)(5.8,1.6272)(5.8,2.0) 
  \dottedline{0.05}(4.9799,1.3636)(3.0,2.9)
    \dottedline{0.05}(1.0201,1.36)(3.0,2.9)
    \dottedline{0.05}(1.0201,1.36)(0.25,2.1)
    \dottedline{0.05}(0.65,2.5)(0.25,2.1)
    \dottedline{0.05}(0.65,2.5)(2.1,1.2)
    \dottedline{0.05}(4.1,2.8)(2.1,1.2)
\end{picture}
\caption{The ellipse}
\end{center}
\end{figure}
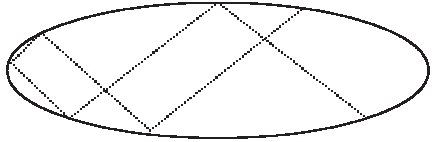

\subsection{An extension}

Let $-\pi/2<\theta_1<0<\theta_2<\pi/2$ be two angles and let $\ell=-2\tan\theta_1+2\tan\theta_2$.
Define $\Gamma=\{\gamma(t),t\in[0,1]\}$ as follows:
\begin{equation}
\label{eq:tube}
\gamma(t)=\begin{cases}
\bigl(\ell+\cos 4\pi(t-1/8) ,\sin4\pi(t-1/8)\bigr)&\mbox{for }t\in[0,1/4]\\
(\ell(2-4t),1)&\mbox{for }t\in[1/4,1/2]\\
\bigl(\cos 4\pi(t-3/8) ,\sin4\pi(t-3/8)\bigr)&\mbox{for }t\in[1/2,3/4]\\
(\ell(-3+4t),-1)&\mbox{for }t\in[3/4,1]
\end{cases}
\end{equation}
and write $\Gamma=\cc_+ \cup\ss_+\cup\cc_-\cup\ss_-$ for the four corresponding pieces of $\Gamma$
({\it see} Figure \ref{fig:tube}).

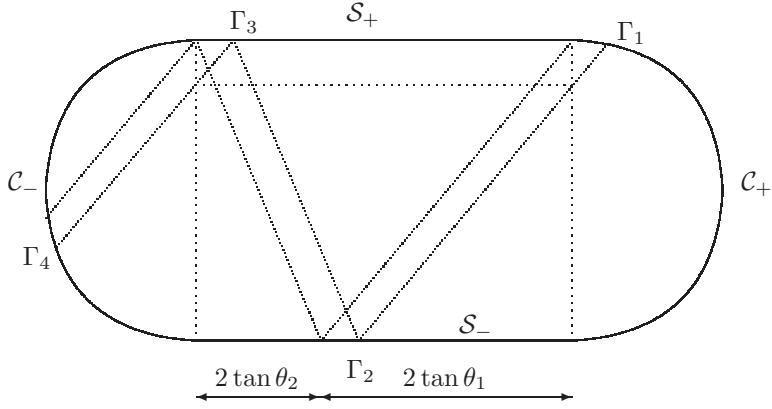
\begin{figure}[!ht]
\begin{center}
\setlength{\unitlength}{0.5cm}
\begin{picture}(26,13)
\qbezier[400](2,6)(2.2,9.8)(6,10)
\qbezier[400](2,6)(2.2,2.2)(6,2)
\put(1,6){$\mathcal{C}_-$}
\put(20.5,6){$\mathcal{C}_+$}
\put(10,10.5){$\mathcal{S}_+$}
\put(13,2.2){$\mathcal{S}_-$}
\dottedline{0.01}(6,10)(16,10)
\dottedline{0.01}(6,2)(16,2)
\qbezier[400](20,6)(19.8,9.8)(16,10)
\qbezier[400](20,6)(19.8,2.2)(16,2)
\dottedline{0.2}(6,2)(6,10)
\dottedline{0.2}(16,2)(16,10)
\dottedline{0.2}(6,8.8)(16,8.8)
\dottedline{0.1}(2.3,4.5)(7,10)
\dottedline{0.1}(10.32,2)(16.94,9.9)
\put(17.2,10){$\Gamma_1$}
\dottedline{0.1}(10.32,2)(7,10)
\put(10,1){$\Gamma_2$}
\dottedline{0.1}(2,5.27)(6,10)
\put(6.9,10.3){$\Gamma_3$}
\put(1.4,4){$\Gamma_4$}
\dottedline{0.1}(9.32,2)(15.94,9.9)
\dottedline{0.1}(9.32,2)(6,10)
\put(6,0.5){\vector(1,0){3.3}}
\put(9.3,0.5){\vector(-1,0){3.3}}
\put(6.5,0.8){$2\tan\theta_2$}
\put(9.3,0.5){\vector(1,0){6.7}}
\put(16,0.5){\vector(-1,0){6.7}}
\put(11.5,0.8){$2\tan\theta_1$}
\end{picture}
\caption{The domain $\Gamma$}
\label{fig:tube}
\end{center}
\end{figure}

In other words, $\Gamma$ is a circle of radius $1$, cut into two halves, the two halves are then separated by a distance $\ell$ and glued together
by a rectangle of length $\ell$ and width $2$. This length is chosen so that the following holds

-- take a point $\Gamma_1$ in $\cc_+$, and draw a line orthogonal to $\theta_1$ and assume this line
intersects $\ss_-$ in a point $\Gamma_2$. (Otherwise it intersects $\cc_+$ in a point $\tilde\Gamma_1$)

-- From $\Gamma_2$, draw a line orthogonal to $\theta_2$. This line will intersect $\ss_+$ in a point $\Gamma_3$.

-- From $\Gamma_3$, draw a line orthogonal to $\theta_1$. This line will intersect $\cc_-$ in a point $\Gamma_4$.

Then $\Gamma_4$ is the translate by $(-\ell,0)$ of the point $\tilde\Gamma_4$ that is the intersection of $\cc_-+(\ell,0)$
with the line orthogonal to $\theta_1$ starting at $\Gamma_1$.

We may of course exchange $\cc_+$ and $\cc_-$ that is, to go backwards in the above argument. Moreover, we can replace $\theta_1$
by $\theta_2$ (this is needed if, at the first step, we go from $\Gamma_1$ to $\tilde\Gamma_1$).

Define $s_j$, $j=1,\ldots,4$ so that $\gamma(s_j)=\Gamma_j$.

Now let $\mu\in\aa\cc(\Gamma)$ be such that $\widehat{\mu}=0$ on $\ell_{\theta_1}\cup\ell_{\theta_2}$. Then
\eqref{eq:symf} for $\theta_1$ in \eqref{eq:tub1}, then for $\theta_2$ in \eqref{eq:tub2} and for $\theta_1$ again in \eqref{eq:tub3}
shows that:
\begin{eqnarray}
\frac{f_\mu(s_1)}{\pi_{\theta_1}\gamma'(s_1)}
&=&-\frac{f_\mu(s_2)}{\pi_{\theta_1}\gamma'(s_2)}
=-\frac{f_\mu(s_2)}{\pi_{\theta_2}(4\ell,0)}\frac{\pi_{\theta_2}(4\ell,0)}{\pi_{\theta_1}(4\ell,0)}\label{eq:tub1}\\
&=&\frac{f_\mu(s_3)}{\pi_{\theta_2}(-4\ell,0)}\frac{\pi_{\theta_2}(4\ell,0)}{\pi_{\theta_1}(4\ell,0)}
=\frac{f_\mu(s_3)}{\pi_{\theta_1}(-4\ell,0)}\label{eq:tub2}\\
&=&\frac{f_\mu(s_4)}{\pi_{\theta_1}\gamma'(s_4)}.\label{eq:tub3}
\end{eqnarray}

A similar identity holds if we replace $\theta_1$ by $\theta_2$.

Let us now define $\nu$ a measure on the unit circle $\{(\cos 2\pi t,\sin2\pi t),t\in[-1/4,3/4]\}$
by 
$$
f_\nu(t)=\begin{cases}f_\mu(t/2+1/8)&\mbox{for }t\in[-1/4,1/4]\\
f_\mu(t/2+3/8)&\mbox{for }t\in[1/4,3/4]
\end{cases}.
$$
In other words, $\nu$ is $\mu$ restricted to the two half-circles (when glued back together).

From the discussion above, we see that \eqref{eq:tub3} is \eqref{eq:symf} for $f_\nu$ and $\theta_1$.
The same holds for $\theta_2$. Therefore, $\widehat{\nu}=0$ on $\ell_{\theta_1}\cup\ell_{\theta_2}$.
But, according to Theorem \ref{th:circle}, $\nu=0$, that is
$f_\mu=0$ on $[0,1/4]\cup[1/2,3/4]$. It follows from \eqref{eq:tub1}-\eqref{eq:tub2} that $f_\mu=0$
on $[1/4,1/2]\cup[3/4,1]$.

We have thus proved:

\begin{proposition}
Let $-\pi/2<\theta_1<0<\theta_2<\pi/2$ be two angles and let $\Gamma=\{\gamma(t),t\in[0,1]\}$
with $\gamma$ defined in \eqref{eq:tube}. Then
$(\Gamma,\ell_{\theta_1}\cup\ell_{\theta_2})$ is a Heisenberg Uniqueness Pair.
\end{proposition}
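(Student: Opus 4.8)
The plan is to recognize $\Gamma$ as two unit semicircles $\cc_+$, $\cc_-$ separated by a rectangle whose length $\ell=-2\tan\theta_1+2\tan\theta_2$ has been tuned so that the dynamics induced by $\ell_{\theta_1}\cup\ell_{\theta_2}$ is blind to the two flat pieces; once this is made precise, the statement reduces to the circle case already settled in Theorem \ref{th:circle}.

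First I would check that $\Gamma$ and each $\theta_i$ satisfy the hypotheses of Corollary \ref{cor:2}, which produces the maps $\Phi_1,\Phi_2$ together with the symmetry relation \eqref{eq:symf}. The decisive structural feature is that along each flat piece $\ss_\pm$ the tangent $\gamma'$ is the constant vector $(\pm 4\ell,0)$, so $\pi_{\theta_i}\gamma'$ is constant there, equal to $\pi_{\theta_i}(\pm 4\ell,0)$. I would then trace one orbit $\Gamma_1\to\Gamma_2\to\Gamma_3\to\Gamma_4$ running $\cc_+\to\ss_-\to\ss_+\to\cc_-$ and apply \eqref{eq:symf} in turn for $\theta_1$, then $\theta_2$, then $\theta_1$. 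Because the intermediate tangents are the constants above, the factors $\pi_{\theta_2}(4\ell,0)/\pi_{\theta_1}(4\ell,0)$ telescope and cancel, yielding the chain \eqref{eq:tub1}--\eqref{eq:tub3}; the upshot is that $f_\mu(s_1)/\pi_{\theta_1}\gamma'(s_1)=f_\mu(s_4)/\pi_{\theta_1}\gamma'(s_4)$, exactly the relation the circle symmetry would impose between the corresponding points of $\cc_+$ and $\cc_-$.

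Granting this, I would transplant $f_\mu$ from the two arcs onto a single unit circle by the substitution defining $f_\nu$, so that the telescoped identities become precisely \eqref{eq:symf} for $f_\nu$, for both $\theta_1$ and $\theta_2$ on the full circle. By Corollary \ref{cor:2} this says $\widehat{\nu}=0$ on $\ell_{\theta_1}\cup\ell_{\theta_2}$, whence Theorem \ref{th:circle} forces $\nu=0$; thus $f_\mu=0$ on the arcs $[0,1/4]\cup[1/2,3/4]$. Feeding $f_\mu\equiv 0$ on the arcs back into \eqref{eq:tub1}--\eqref{eq:tub2} then annihilates $f_\mu$ on the flat pieces $[1/4,1/2]\cup[3/4,1]$ as well, giving $\mu=0$.

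The hard part will be the geometric verification that the specific value $\ell=-2\tan\theta_1+2\tan\theta_2$ is exactly what makes the rectangle detour transparent: one must confirm that the two horizontal shifts incurred while crossing $\ss_-$ along a $\theta_1^\perp$-line and $\ss_+$ along a $\theta_2^\perp$-line sum to $\ell$, so that after the gluing translation by $(-\ell,0)$ the image point $\Gamma_4$ lands on the point the reassembled circle's reflection dynamics would predict from $\Gamma_1$. This is elementary plane geometry, but it is the linchpin; everything else is bookkeeping with \eqref{eq:symf} and a citation of the circle case.
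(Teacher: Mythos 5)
Your proposal is essentially the paper's own proof, step for step: the same three-step chain (chord for $\theta_1$, then $\theta_2$, then $\theta_1$) through the flat pieces, with the constant tangent $(\pm 4\ell,0)$ there making the weights in \eqref{eq:symf} telescope into \eqref{eq:tub1}--\eqref{eq:tub3}; the same gluing of $f_\mu$ on the two semicircular arcs into a measure $\nu$ on the unit circle satisfying \eqref{eq:symf} for both angles, hence $\widehat{\nu}=0$ on $\ell_{\theta_1}\cup\ell_{\theta_2}$; the same citation of Theorem \ref{th:circle} to force $\nu=0$; and the same feedback through \eqref{eq:tub1}--\eqref{eq:tub2} to annihilate $f_\mu$ on the segments --- and the geometric ``linchpin'' you single out is indeed elementary, amounting to the identity $(\ell-2\tan\theta_2,\,2)=\dst\frac{2}{\cos\theta_1}\,\theta_1^\perp$, which is exactly where the choice $\ell=-2\tan\theta_1+2\tan\theta_2$ enters. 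The one caveat, which your write-up inherits from the paper rather than introduces, is that Theorem \ref{th:circle} yields $\nu=0$ only under the hypothesis $\dst\frac{1}{\pi}(\theta_2-\theta_1)\notin\Q$, which the Proposition does not assume; for rational $\dst\frac{1}{\pi}(\theta_2-\theta_1)$ this step fails in both versions, and in fact the gluing correspondence runs backwards (segment values can be reconstructed from the end relations of each chain, the middle relations being precisely the circle relations), lifting a nonzero annihilating measure from the circle to $\Gamma$, so the irrationality restriction is genuinely needed.
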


\subsection{Rotation numbers}
\label{sec:rot}

Till the end of section \ref{sec:rot}, we will assume that $\Gamma$ is a $\cc^k$-smooth, $k\geq 4$
closed curve with non vanishing curvature. We parametrize $\Gamma=\{\gamma(s),s\in\R\}$
where $\gamma$ is one-to-one and $1$-periodic. Let $\theta_1\not=\theta_2$ be two angles and assume that
$\Gamma$ satisfies the hypothesis of Corollary \ref{cor:2} for both $\theta_1$ and $\theta_2$.
Let $\Phi_j=\Phi_{\theta_j}$ be the corresponding maps and write 
$\Phi$
for $\Phi=\Phi_2\circ\Phi_1$.

Note that $\Phi$ is of class $\cc^{k-1}$ and, as $\Phi_1$ and $\Phi_2$ are orientation reverting,
$\Phi$ is orientation preserving. We denote by $\tilde\Phi$ a $\cc^{k-1}$ lifting of $\Phi$ as a map from $\R\to\R$.

We need a bit more notation. All results mentioned in this section are standard facts in the theory of dynamical systems
and can be found in \cite{He,Yo} which also give precise references for them.

The rotation number of $\tilde\Phi$ is defined as $\rho(\tilde\Phi)=\lim\frac{\tilde\Phi^n(x)-x}{n}$. As is well known, this limit exists and does not depend on 
$x$. Moreover, we define $\rho(\Phi)=\rho(\tilde\Phi)\,\mbox{mod}\,1$ and this number does not depend on the choice of lifting $\tilde\Phi$. 

\begin{notation}
We will write $\rho(\Gamma;\theta_1,\theta_2)=\rho(\Phi)$ to stress the dependence on $\theta_1,\theta_2$
and $\Gamma$.
\end{notation}

Recall that $\rho(\Phi)$ is rational if and only if $\Phi$ has a periodic orbit.
On the other hand if $\alpha=\rho(\Phi)$ is irrational, it is known that $\Phi$ is conjugated to the rotation of angle $\alpha$
(for this we only need $\Phi$ to be of class $\cc^2$ but $\cc^1$ may not suffice).
However this conjugation may not be regular, even though $\Phi$ is of class $\cc^\infty$. In order to obtain a regular map, we need more.
Recall that $\alpha\in\R\setminus\Q$ is called diophantian of order $\beta$ (in $\alpha\in\cc_\beta$)
if there exists $C>0$ such that $|\alpha-p/q|\geq C/q^{2+\beta}$ for every $p/q\in\Q$. Note that $\dst\bigcup_{\beta\geq0}\cc_\beta$
has full Lebesgue measure. We will use the following theorem:

\begin{theorem}[Yoccoz \cite{Yo}]\label{th:yoccoz}
If $\Phi$ is of class $\cc^{k-1}$, $k\geq 4$, and assume that $\alpha:=\rho(\Phi)\in\cc_\beta$
with $k>2(\beta+1)$. Then there exists a diffeomorphism $h$ of class $\cc^{k-\beta-2-\eps}$ for every $\eps>0$ such that
$\Phi=h^{-1}\circ R_\alpha\circ h$ where $R_\alpha$ is the rotation of angle $\alpha$, $R_\alpha(t)=t+\alpha\ \mbox{mod}\,1$.
\end{theorem}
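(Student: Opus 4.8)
The statement is Yoccoz's global linearization theorem for circle diffeomorphisms, one of the central and genuinely hard results of one-dimensional dynamics, so I would not expect an elementary argument; my plan is to reconstruct the Herman--Yoccoz strategy, which bootstraps a merely topological conjugacy into a smooth one. The starting point is Denjoy's theorem: since $\Phi$ is of class $\cc^{k-1}$ with $k-1\geq 3\geq 2$ and has irrational rotation number $\alpha$, it is topologically conjugate to the rigid rotation $R_\alpha$, so there is a homeomorphism $h_0$ with $h_0\circ\Phi=R_\alpha\circ h_0$. The entire difficulty is to show that this conjugacy (suitably corrected) is in fact a $\cc^{k-\beta-2-\eps}$ diffeomorphism. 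Writing the sought conjugacy as $h=\mathrm{id}+u$, a perturbation of the identity, the equation $h\circ\Phi=R_\alpha\circ h$ becomes a nonlinear functional equation whose linearization at each approximation is a cohomological (difference) equation $u\circ R_\alpha-u=\psi$.

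Second, I would analyze the small divisors. Solving $u\circ R_\alpha-u=\psi$ on the Fourier side gives $\hat u(n)=\hat\psi(n)/(e^{2\pi i n\alpha}-1)$ for $n\neq0$, which requires the compatibility condition $\int\psi=0$ (itself available because $\rho(\Phi)=\alpha$ is the correct frequency). The Diophantine hypothesis $\alpha\in\cc_\beta$ furnishes the lower bound $\abs{e^{2\pi i n\alpha}-1}\geq c\abs{n}^{-(1+\beta)}$, so inverting the cohomological operator costs $1+\beta$ derivatives, plus an $\eps$ at the borderline regularity. This single loss is exactly what degrades the class from $k-1$ to $(k-1)-(1+\beta)-\eps=k-\beta-2-\eps$; the whole point of the iterative scheme below is that it is engineered so that this is the only loss that survives.

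Third, to overcome the fact that one linearization already loses derivatives, I would set up a quadratically convergent Newton (KAM) scheme interleaved with smoothing operators: at stage $n$, replace $\Phi_n$ by a smoothed map, solve the cohomological equation to produce a correction $h_n=\mathrm{id}+u_n$, conjugate $\Phi_{n+1}=h_n\circ\Phi_n\circ h_n^{-1}$, and show that the nonlinearity $\Phi_n-R_\alpha$ decreases super-exponentially while the derivative loss stays controlled by the smoothing parameters. The base estimates that keep the scheme from diverging are the \emph{a priori} (real) bounds on the geometry of the dynamical partition indexed by the continued-fraction denominators of $\alpha$: bounded distortion, coming from the $\cc^2$ control and the nonvanishing curvature of $\Gamma$, together with the Denjoy--Koksma estimate. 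Yoccoz's own route instead replaces the analytic KAM step by a direct geometric control of the renormalized maps along the continued-fraction algorithm; either way the regularity bookkeeping delivers $h\in\cc^{k-\beta-2-\eps}$.

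The hard part, which I expect to consume almost all of the work, is the convergence of the scheme in the presence of the small divisors. One must balance the super-exponential gain from quadratic convergence against the $1+\beta+\eps$ derivative loss per inversion and the loss from smoothing, and the inequality $k>2(\beta+1)$ enters precisely here to guarantee a net gain at each step and to pin down the sharp final loss. Establishing the uniform \emph{a priori} bounds on the dynamical partition --- equivalently, that the conjugacy and its inverse have moduli of continuity bounded independently of $n$ --- is the delicate analytic core, and I would treat it as the main obstacle; once it is in place, the regularity count and the convergence follow from the now-standard machinery.
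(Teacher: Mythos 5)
You should first note that the paper does not prove this statement at all: it is Yoccoz's global conjugacy theorem, quoted verbatim from \cite{Yo} (following Herman \cite{He}), and the paper's ``proof'' is simply the citation. It is a deep external input, used as a black box to obtain Corollary \ref{cor:yoc} via Lemma \ref{lem:rot}, so the comparison here is between your sketch and the dynamical-systems literature, not with any argument contained in this paper.

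As a proof, your proposal has a genuine gap, and the gap is essentially the whole theorem. What you give is a correct high-level roadmap --- Denjoy's theorem to obtain a topological conjugacy, the cohomological equation $u\circ R_\alpha-u=\psi$ with small divisors controlled by $\alpha\in\cc_\beta$ (costing $1+\beta+\eps$ derivatives), and a Newton/KAM iteration with smoothing --- but you explicitly defer the convergence of the scheme and the uniform a priori bounds on the dynamical partition, which is exactly where all the difficulty lives and where the hypothesis $k>2(\beta+1)$ must actually be exploited; declaring that the rest ``follows from the now-standard machinery'' is a citation in disguise, not an argument. Two further inaccuracies: the nonvanishing curvature of $\Gamma$ that you invoke for bounded distortion is foreign to the statement, since the theorem concerns an abstract circle diffeomorphism $\Phi$ and the curve only enters the paper's application (Proposition \ref{prop:reg}), where it guarantees that $\Phi$ is of class $\cc^{k-1}$; and the derivative count $(k-1)-(1+\beta)-\eps=k-\beta-2-\eps$ from a single inversion of the cohomological operator is only heuristic, because the iteration compounds losses, and recovering this sharp exponent in finite regularity is precisely Yoccoz's contribution beyond Herman's earlier work (his actual proof proceeds through cross-ratio and Schwarzian-derivative estimates along the continued-fraction renormalization rather than a KAM scheme, as you partly acknowledge). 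Citing \cite{Yo}, as the paper does, is the right move; a self-contained account would require reproducing that analysis in full, not outlining it.
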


Together with Lemma \ref{lem:rot} we obtain the following

\begin{corollary}
\label{cor:yoc}
Let $\beta\geq 0$, $k\geq \min(4,\beta+3,2\beta+2)$.
Let $\Gamma$ be a $\cc^k$ smooth closed convex curve with non-vanishing curvature and $\theta_1,\theta_2$ be two angles.
Assume that $\rho(\Gamma;\theta_1,\theta_2)\in \cc_\beta$ then $(\Gamma,\ell_{\theta_1}\cup\ell_{\theta_2})$
is a Heisenberg Uniqueness Pair.
\end{corollary}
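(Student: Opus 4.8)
The plan is to read the statement off as a direct synthesis of the regularity result Proposition \ref{prop:reg}, Yoccoz's linearization Theorem \ref{th:yoccoz}, and the ergodic criterion Lemma \ref{lem:rot}. First I would check that the geometric hypotheses of Corollary \ref{cor:2} hold for both $\theta_1$ and $\theta_2$: since $\Gamma$ is convex with non-vanishing curvature, for each $\theta$ the map $\pi_\theta\gamma$ has exactly one maximum and one minimum, every line orthogonal to $\theta$ meets $\Gamma$ in at most two points, and $\Gamma$ carries no face normal to $\theta_j$. Hence $I_0=\emptyset$ and $I$ splits as $I_-^j\cup I_+^j$, so the maps $\Phi_j=\Phi_{\theta_j}$ are well defined. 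Proposition \ref{prop:reg} then gives $\Phi_1,\Phi_2\in\cc^{k-1}$, and since each $\Phi_j$ reverses orientation, the composition $\Phi=\Phi_2\circ\Phi_1$ is an orientation-preserving circle diffeomorphism of class $\cc^{k-1}$.

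Next I would bring in the rotation number. By hypothesis $\alpha:=\rho(\Gamma;\theta_1,\theta_2)=\rho(\Phi)$ lies in the Diophantine class $\cc_\beta$; in particular $\alpha\in\R\setminus\Q$. This puts us exactly in the setting of Theorem \ref{th:yoccoz} applied to $\Phi\in\cc^{k-1}$: provided $k\geq 4$ and $k>2(\beta+1)$, there is a diffeomorphism $h$, of class $\cc^{k-\beta-2-\eps}$ for every $\eps>0$, with $\Phi=h^{-1}\circ R_\alpha\circ h$. Once $h$ is known to be at least a $\cc^1$-diffeomorphism and $\alpha$ is irrational, Lemma \ref{lem:rot} applies verbatim and yields that the only $\mu\in\aa\cc(\Gamma)$ with $\widehat{\mu}=0$ on $\ell_{\theta_1}\cup\ell_{\theta_2}$ is $\mu=0$, which is the desired conclusion.

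The only genuine work, and the step I expect to be the main obstacle, is the arithmetic bookkeeping relating $k$ and $\beta$. One must verify that the stated constraint on $k$ simultaneously secures (a) the two hypotheses $k\geq 4$ and $k>2(\beta+1)$ needed to invoke Theorem \ref{th:yoccoz}, and (b) the output regularity $k-\beta-2-\eps>1$ for some admissible $\eps>0$, so that the conjugacy $h$ furnished by Yoccoz is a bona fide $\cc^1$-diffeomorphism and can legitimately be fed into Lemma \ref{lem:rot}. This amounts to tracking the derivative loss $\beta+2$ in the linearization and choosing $\eps$ small; it is routine but must be handled with care, since it is precisely the place where the hypothesis on $k$ enters and where the argument would break down for a curve that is only finitely differentiable below the threshold.
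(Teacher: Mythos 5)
Your proposal is correct and is essentially the paper's own proof: the paper derives this corollary in one line by combining Theorem \ref{th:yoccoz} with Lemma \ref{lem:rot}, the required $\cc^{k-1}$ regularity and orientation-preservation of $\Phi=\Phi_2\circ\Phi_1$ coming from Proposition \ref{prop:reg}, exactly as you describe. Your ``bookkeeping'' concern is in fact well-placed: carrying it out shows the hypothesis $k\geq\min(4,\beta+3,2\beta+2)$ must be read as a $\max$ (with the appropriate strict inequalities), since as written it would admit, e.g., $\beta=0$, $k=2$, for which Yoccoz's theorem does not apply and the conjugacy $h$ need not be $\cc^1$.
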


Unfortunately, computing the rotation number $\rho(\Gamma;\theta_1,\theta_2)$ is practically impossible.
Nevertheless, if we assume that
\begin{equation}
\label{eq:tphi}
a\leq\tilde\Phi(x)-x\leq b
\end{equation}
{\it i.e.} if we bound the ``displacement'' of $\Phi$ then
$$
ka\leq \tilde\Phi^k(x)-x=\sum_{j=1}^{k}\tilde\Phi^j(x)-\tilde\Phi^{j-1}(x)\leq kb
$$
thus $a\leq \rho(\Gamma;\theta_1,\theta_2)\leq b$. Note that it is enough to obtain the bound \eqref{eq:tphi} for $x\in[0,1]$.
and that \eqref{eq:tphi} is equivalent to 
$$
a\leq|\Phi_1(x)-\Phi_2(x)|\leq b.
$$
Now, from this, it is obvious that $\rho(\Gamma;\theta_1,\theta_2)\to 0$ when $\theta_2\to\theta_1$.
On the other hand, $\rho(\Gamma;\theta_1,\theta_2)\not=0$ since $\Phi_1(x)\not=\Phi_2(x)$
(otherwise $\theta_1\not=\theta_2$ would both be normal to $\Gamma$) thus $\min_{[0,1]}|\Phi_1(x)-\Phi_2(x)|>0$
by continuity of $\Phi_1,\Phi_2$.

To overcome this and show that Heisenberg Uniqueness Pairs are frequent, we will appeal to the following

\begin{theorem}[Herman \cite{He2}]
Let $\Psi_t$ be a family of diffeomorphisms of $[0,1)$ of class $\cc^3$ such that the dependence in the parameter $t$
is of class $\cc^1$. Then either the rotation number $\rho(\Phi_t)$ does not depend on $t$ or 
there exists a set $E$ of positive Lebesgue measure such that, for every $t\in E$, $\Phi_t$ is conjugated to
a rotation with irrational angle.
\end{theorem}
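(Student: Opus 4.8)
The plan is to reduce everything to a one–variable real–analysis estimate on the rotation–number function $r(t):=\rho(\Phi_t)$ and then to feed the outcome into the linearization theory already available. The first step I would carry out is to record that $r$ is \emph{Lipschitz} on $[0,1]$, with no monotonicity of the family in $t$ required. Indeed, let $\tilde\Phi_t$ be the lift of $\Phi_t$; since the dependence in $t$ is $\cc^1$ and $\partial_t\tilde\Phi_t$ is $1$–periodic in the space variable, $M:=\sup_{t,x}\abs{\partial_t\tilde\Phi_t(x)}<\infty$. For $t_1<t_2$ one then has, for every $x$,
\[
\tilde\Phi_{t_1}(x)-M(t_2-t_1)\le\tilde\Phi_{t_2}(x)\le\tilde\Phi_{t_1}(x)+M(t_2-t_1),
\]
and since the rotation number is monotone in the map and satisfies $\rho(\tilde\Phi+c)=\rho(\tilde\Phi)+c$ for a constant $c$, this forces $\abs{r(t_2)-r(t_1)}\le M\abs{t_2-t_1}$.

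The heart of the argument is then a measure estimate. Since $\Q$ is Lebesgue–null, I would invoke the one–dimensional coarea (Banach indicatrix) formula for the Lipschitz map $r$: for every measurable $A\subset[0,1]$,
\[
\int_A\abs{r'(t)}\,\mbox{d}t=\int_\R\#\set{t\in A\tq r(t)=y}\,\mbox{d}y .
\]
Taking $A=r^{-1}(\Q)$, the integrand is supported on the null set $\Q$, so $\int_{r^{-1}(\Q)}\abs{r'}=0$, i.e. $r'=0$ almost everywhere on the union of ``phase–locked'' sets $r^{-1}(\Q)$. Consequently $\int_0^1\abs{r'(t)}\,\mbox{d}t=\int_{r^{-1}(\R\setminus\Q)}\abs{r'(t)}\,\mbox{d}t\le M\,\abs{r^{-1}(\R\setminus\Q)}$. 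Now the dichotomy of the theorem appears: either $r$ is constant (and $\int_0^1\abs{r'}=0$), or $r$ is non–constant, whence $\int_0^1\abs{r'}>0$ and the inequality yields $\abs{E}>0$ for $E:=r^{-1}(\R\setminus\Q)=\set{t\tq\rho(\Phi_t)\notin\Q}$.

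It remains to turn membership in $E$ into a conjugacy. For $t\in E$ the rotation number $\rho(\Phi_t)$ is irrational and $\Phi_t$ is of class $\cc^3$, so by Denjoy's theorem $\Phi_t$ is topologically conjugated to the irrational rotation $R_{\rho(\Phi_t)}$ — exactly the stated conclusion. I would also note the sharper form used downstream: replacing $\Q$ by the (still Lebesgue–null) set of non–diophantine reals, the very same estimate gives $\abs{r^{-1}\bigl(\bigcup_{\beta\ge0}\cc_\beta\bigr)}>0$, and on that set Theorem \ref{th:yoccoz} upgrades the conjugacy to a genuine $\cc^1$–diffeomorphism $h$, which is the form required to apply Lemma \ref{lem:rot}.

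I expect the main obstacle to lie entirely in the last step rather than in the elementary engine above. The Lipschitz property together with the coarea identity is essentially all one needs on the analysis side, and it is robust; what it does \emph{not} supply is the passage from ``irrational (or diophantine) rotation number'' to an actual conjugacy. The genuine difficulty is thus concentrated in the linearization theory for circle diffeomorphisms — Denjoy's theorem for the bare topological statement under $\cc^3$, and the deeper Herman--Yoccoz theorem (quoted as Theorem \ref{th:yoccoz}) when a smooth conjugacy is needed. Secondary points merely to verify are that the monotonicity and shift–equivariance of the rotation number are correctly applied to the lift $\tilde\Phi_t$, and that $r^{-1}(\Q)$ is Borel (it is, $r$ being continuous), so that the coarea bookkeeping is legitimate.
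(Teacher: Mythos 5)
There is a genuine gap, and it sits exactly where your argument does all of its work: the claim that $r(t)=\rho(\Phi_t)$ is Lipschitz in $t$. Your derivation uses the identity $\rho(\tilde\Phi+c)=\rho(\tilde\Phi)+c$ for a real constant $c$, but this holds only for $c\in\Z$ (where $\tilde\Phi+c$ is a lift of the same circle map). For non-integer $c$ it fails badly: take $\tilde\Phi(x)=x+\eps\sin(2\pi x)$ with $0<\eps<1/2\pi$, whose circle map has fixed points, so $\rho(\tilde\Phi)=0$; for every $\abs{c}\leq\eps$ the map $\tilde\Phi+c$ still has fixed points (solve $\sin(2\pi x)=-c/\eps$), hence $\rho(\tilde\Phi+c)=0\not=\rho(\tilde\Phi)+c$. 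This phase-locking phenomenon is not a technicality, it is the whole difficulty of the theorem. For a typical $\cc^1$ family (e.g. the Arnold family $x\mapsto x+t+\eps\sin(2\pi x)$) the rotation number is a devil's staircase: locally constant on the plateaus where it is rational, with square-root (saddle-node) behaviour at the plateau endpoints, so $r$ is not Lipschitz, and nothing soft gives absolute continuity. Once absolute continuity is lost, the coarea/Banach-indicatrix step collapses: a continuous, even monotone, function can be non-constant, have derivative zero almost everywhere, and send a set of full measure of parameters into $\Q$. Excluding precisely this scenario for families of circle diffeomorphisms \emph{is} Herman's theorem; it cannot be recovered by one-variable measure theory alone. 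Herman's actual argument runs in the opposite direction from yours: smooth linearization at diophantine rotation numbers (the Herman--Yoccoz theory) is used as an \emph{ingredient} to obtain bounded-distortion control of $t\mapsto r(t)$ near parameters with diophantine rotation number, and this control is what transfers the full Lebesgue measure of the diophantine numbers in the image to a positive-measure set in the parameter space. In your proposal the linearization theorem appears only as post-processing after the measure estimate, which is exactly backwards.

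Two further remarks. First, the paper itself does not prove this statement: it is quoted as Herman's theorem \cite{He2} and used as a black box, so there is no ``paper proof'' that your elementary route could replace. Second, even if your measure estimate were valid, the Denjoy step only yields a topological conjugacy on the set where $r$ is irrational, which is not enough for the downstream application: Lemma \ref{lem:rot} requires a $\cc^1$ conjugacy $h$, i.e. the diophantine version via Theorem \ref{th:yoccoz}. You noticed this and proposed replacing $\Q$ by the null set of non-diophantine numbers, which would indeed be the right fix — but with the Lipschitz step broken, neither version of the estimate survives.
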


\begin{corollary}
Let $k\geq4$ and let $\Gamma$ be a $\cc^k$ smooth closed convex curve with non-vanishing curvature. Then 
there exists a
set of positive Lebesgue measure $E\subset(-\pi/2,\pi/2)^2$ such that, for almost every
$(\theta_1,\theta_2)\in E$, $(\Gamma,\ell_{\theta_1}\cup\ell_{\theta_2})$
is a Heisenberg Uniqueness Pair.
\end{corollary}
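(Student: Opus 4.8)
The plan is to fix $\theta_1$ and to view $\theta_2\mapsto\Phi=\Phi_2\circ\Phi_1$ as a one-parameter family of circle diffeomorphisms, and then to combine Herman's theorem (quoted above) with Lemma~\ref{lem:rot}. First I would record the regularity needed to enter that setting. Since $\Gamma$ is a $\cc^k$ closed convex curve with non-vanishing curvature and $k\geq 4$, Proposition~\ref{prop:reg} shows that for each pair $(\theta_1,\theta_2)$ the map $\Phi=\Phi_2\circ\Phi_1$ is a $\cc^{k-1}$ (hence $\cc^3$) diffeomorphism of $[0,1)$, that it is orientation preserving (being the composition of the two orientation-reversing involutions $\Phi_1,\Phi_2$), and that it depends on $\theta_2$ in a $\cc^1$ fashion. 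Thus, with $\theta_1$ fixed, $t\mapsto\Psi_t:=\Phi_t\circ\Phi_{\theta_1}$ is exactly a family of the type to which Herman's theorem applies.

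Next I would verify that the rotation number is non-constant along this family, which is what excludes the degenerate (first) alternative in Herman's theorem. This is precisely the content of the displacement estimate established just before the corollary: $\rho(\Gamma;\theta_1,\theta_2)$ is continuous in $\theta_2$ and is squeezed between $\min_{[0,1]}|\Phi_1-\Phi_2|$ and $\max_{[0,1]}|\Phi_1-\Phi_2|$, hence tends to $0$ as $\theta_2\to\theta_1$, while for $\theta_2\neq\theta_1$ one has $\min_{[0,1]}|\Phi_1-\Phi_2|>0$ (otherwise $\theta_1$ and $\theta_2$ would both be normal to $\Gamma$ at a common point). Consequently $t\mapsto\rho(\Gamma;\theta_1,t)$ is not identically constant, so Herman's theorem yields a set $E_{\theta_1}\subset(-\pi/2,\pi/2)$ of positive Lebesgue measure such that, for $\theta_2\in E_{\theta_1}$, the map $\Phi$ is $\cc^1$-conjugate to a rotation of irrational angle. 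By Lemma~\ref{lem:rot}, $(\Gamma,\ell_{\theta_1}\cup\ell_{\theta_2})$ is then a Heisenberg Uniqueness Pair for every $\theta_2\in E_{\theta_1}$. (Alternatively one may pass through Corollary~\ref{cor:yoc}: Herman's construction produces a positive-measure set of $\theta_2$ with Diophantine rotation number, and for $k\geq 4$ Theorem~\ref{th:yoccoz} supplies the required smooth conjugacy on that Diophantine part.)

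Finally I would assemble the slices into a planar set via Fubini. Since $\rho(\Gamma;\theta_1,\theta_2)$ is continuous in $(\theta_1,\theta_2)$, the set $E=\set{(\theta_1,\theta_2):\rho(\Gamma;\theta_1,\theta_2)\notin\Q}$ is Borel; each horizontal slice contains the positive-measure set $E_{\theta_1}$ produced above, so Fubini gives $|E|>0$. On $E$ the rotation number is irrational, and off a Lebesgue-null (Liouville) subset of each slice it is even Diophantine, so by the previous paragraph $(\Gamma,\ell_{\theta_1}\cup\ell_{\theta_2})$ is a Heisenberg Uniqueness Pair for almost every $(\theta_1,\theta_2)\in E$, as claimed.

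The main obstacle I anticipate is exactly this last measure-theoretic bookkeeping: Herman's theorem is a per-slice statement, and converting it into a positive-measure subset of the square requires (i) the measurability of the certifying set, which is what the continuity of the rotation number in $(\theta_1,\theta_2)$ buys us, and (ii) the fact that the exceptional parameters where the conjugacy fails to be regular enough to feed Lemma~\ref{lem:rot} (the Liouville rotation numbers) form a null set in each slice, which is the quantitative heart of Herman's and Yoccoz's theory. Everything else---the regularity of $\Phi$ and the non-constancy of $\rho$---is routine given the results already proved.
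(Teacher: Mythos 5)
Your proposal is correct and follows essentially the same route the paper intends: the displacement bounds preceding the corollary show $\rho(\Gamma;\theta_1,\cdot)$ is non-constant, Herman's theorem (with the regularity supplied by Proposition \ref{prop:reg}) then produces a positive-measure set of $\theta_2$ on each slice where $\Phi$ is conjugated to an irrational rotation, and Lemma \ref{lem:rot} (or Corollary \ref{cor:yoc} on the Diophantine part) finishes each slice. The paper leaves the Fubini/measurability bookkeeping and the Liouville exceptional set implicit, so your final paragraph is simply a careful expansion of what the authors take for granted rather than a different argument.
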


%
%
%
%

\subsection{Rational rotation number is compatible with Heisenberg Uniqueness}

Let us conclude with an example of a smooth curve $\Gamma$ and angles $\theta_1,\theta_2$ such that $\rho(\Phi)$
is rational but such that $(\Gamma,\ell_{\theta_1}\cup\ell_{\theta_2})$ \emph{is} a Heisenberg uniqueness pair.

First, let $\chi$ be a $1$-periodic $\cc^\infty$ function on $\R$ such that $\supp\chi=[0,1/4]+\Z$, $0<\chi(s)<1/2$ on $(0,1/8)$
and $\chi(s)<0$ on $(1/8,1/4)$. Let $\gamma(s)=\bigl(1+\chi(s)\bigr)(\cos 2\pi s,\sin 2\pi s)$. Note that
$\gamma\,:[0,1]\to\R^2$ has the following properties
\begin{enumerate}
\item $\gamma$ is $\cc^\infty$-smooth,

\item $\gamma(s)=(\cos 2\pi s,\sin 2\pi s)$ for $s\in[1/4,1]$ {\it i.e.} $\Gamma$ contains $3/4$ of the circle $\cc$
centered at $0$ and radius $1$.

\item $|\gamma(s)|<1$ for $s\in(0,1/8)$ and $|\gamma(s)|>1$ for $s\in(1/8,1/4)$. In other words the part of $\Gamma$
in the first quadrant is inside the disc below the diagonal and outside the disc above the diagonal.

\item If $\bigl(1+\chi(s)\bigr)^2+\chi'(s)^2-2\bigl(1+\chi(s)\bigr)\chi''(s)\geq 0$ so that $\Gamma$ is convex.
\end{enumerate}

Let $\theta_1=0$ and $\dst\theta_2=\frac{\pi}{2}$ and consider the associated maps $\Phi_1,\Phi_2$ and $\Phi$
as in Section \ref{sec:general}. Note that $k/8$, $k\in\Z$ are $2$-periodic points of $\Phi$ thus $\Phi$ has rotation number $1/2$.
Then, for $s\in[1/4,3/4]$, $\Phi(s)=s+1/2$. For $s\in (3/4,7/8)$, $s-1/2<\Phi(s)<3/8$ and
for $s\in(7/8,1)$, $\Phi(s)<s-1/2$. As a consequence, if $a\in(1/4,3/8)$ $\Phi^{2k}(a)$ is increasing and bounded, therefore it converges.
The limit is a fixed point of $\Phi$ and the only possible one is $3/8$. Similarly, if $b\in(3/8,1/2)$, $\Phi^{2k}(b)$ is decreasing and bounded
and converges to $3/8$ as well. It follows that $(a,b)$ is attractive for $\Phi$.

According to Lemma \ref{lem:attract}, if $\mu\in\aa\cc(\Gamma)$ is such that $\widehat{\mu}=0$ on $\ell_0\cup\ell_{\pi/2}$,
then $\mbox{supp}\,f_\mu\cap (a,b)\subset \bigcap_{n\geq 0}[\Phi^{2k}(a),\Phi^{2k}(b)]=\{3/8\}$. As $a$
is arbitrary in $(1/4,3/8)$ and $b$ is arbitrary in $(3/8,1/2)$, $f_\mu=0$ on $(1/4,1/2)$. Using Corollary \ref{cor:2}-\eqref{eq:symf}
for $\Phi_1$, $\Phi_2$, we deduce that $f_\mu=0$ on $(0,3/2)$ and using it again for $\Phi_1$ or $\Phi_2$ we deduce that
$f_\mu=0$.

We have thus proved the following:

\begin{proposition}
\label{prop:rotrat}
There exists a smooth closed curve and two angles $\theta_1,\theta_2$ such that
$\rho(\Gamma;\theta_1,\theta_2)$ is rational and $(\Gamma,\ell_{\theta_1}\cup\ell_{\theta_2})$
is a Heisenberg Uniqueness Pair.
\end{proposition}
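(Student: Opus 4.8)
The plan is to realize $\Gamma$ as a smooth convex perturbation of the unit circle $\cc$ that is altered only in the first quadrant, and to choose $\theta_1=0$, $\theta_2=\pi/2$ so that $\Phi=\Phi_2\circ\Phi_1$ has a \emph{rational} rotation number yet an \emph{attractive} periodic orbit. Concretely, I would take a $1$-periodic $\cc^\infty$ bump $\chi$ with $\supp\chi=[0,1/4]+\Z$ that is positive on one half of this interval and negative on the other (so that the perturbed arc lies inside the disc on one side of the diagonal and outside it on the other), and set $\gamma(s)=\bigl(1+\chi(s)\bigr)(\cos 2\pi s,\sin 2\pi s)$. The support condition forces $\gamma$ to agree with $\cc$ for $s\in[1/4,1]$, so three quarters of $\Gamma$ is a genuine circular arc; a polar curvature computation shows $\Gamma$ is convex as soon as $\bigl(1+\chi\bigr)^2+(\chi')^2-2\bigl(1+\chi\bigr)\chi''\ge0$, which can be arranged, and then $\Gamma,\theta_i$ satisfy the hypotheses of Corollary \ref{cor:2}.

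On the circular part the maps of Corollary \ref{cor:2} are the reflections $\Phi_1(s)=-s$ and $\Phi_2(s)=\tfrac12-s$ (mod $1$), so $\Phi(s)=s+\tfrac12$ whenever both chords stay in the unperturbed region; in particular $\Phi(s)=s+\tfrac12$ for $s\in[1/4,1/2]$, and $\{3/8,7/8\}$ is a $2$-periodic orbit. This already gives $\rho(\Gamma;\theta_1,\theta_2)=1/2\in\Q$, so Lemma \ref{lem:rot} is \emph{not} available. The crux is instead a local analysis of $\Phi^2$ on $(1/4,1/2)$ near its fixed point $3/8$: using $\Phi(s)=s+\tfrac12$ for $s\in(1/4,1/2)$ together with the sign of $\chi$ to control $\Phi$ on the image interval $(3/4,1)$, I would establish the strict inequalities $s<\Phi^2(s)<3/8$ for $s\in(1/4,3/8)$ and $3/8<\Phi^2(s)<s$ for $s\in(3/8,1/2)$. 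Thus $3/8$ is attractive from both sides and is the unique fixed point of $\Phi^2$ in $(1/4,1/2)$.

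With this in hand uniqueness follows quickly. Given $\mu\in\aa\cc(\Gamma)$ with $\widehat{\mu}=0$ on $\ell_0\cup\ell_{\pi/2}$, fix $a\in(1/4,3/8)$ and $b\in(3/8,1/2)$; then $(a,b)$ is attractive for $\Phi^2$ with $\bigcap_{n\ge0}\Phi^{2n}\bigl((a,b)\bigr)=\{3/8\}$, so Lemma \ref{lem:attract} gives $\supp f_\mu\cap(a,b)\subset\{3/8\}$, a null set. Letting $a\uparrow 3/8$ and $b\downarrow 3/8$ yields $f_\mu=0$ on $(1/4,1/2)$. A final propagation, applying \eqref{eq:symf} for $\Phi_1$ and then for $\Phi_2$ (exactly as in the closed-curve arguments of Section \ref{sec:wandering}), spreads the vanishing to all of $\Gamma$, so $\mu=0$ and $(\Gamma,\ell_{\theta_1}\cup\ell_{\theta_2})$ is a Heisenberg Uniqueness Pair.

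The main obstacle is the strict monotonicity in the second step. On the exact circle one has $\Phi^2=\mathrm{id}$ on $(1/4,1/2)$, a \emph{neutral} orbit: there $(\cc,\ell_0\cup\ell_{\pi/2})$ fails to be a Heisenberg Uniqueness Pair by Theorem \ref{th:circle} (since $\tfrac1\pi(\theta_2-\theta_1)=1/2\in\Q$), and an infinite-dimensional family of admissible $\mu$ survives. Everything therefore hinges on showing that the convex dent turns this neutral fixed point into a genuinely attracting one with the correct sign on each side, which is precisely where the prescribed sign pattern of $\chi$ enters; verifying the resulting inequalities for $\Phi$ on $(3/4,1)$ is the only delicate computation.
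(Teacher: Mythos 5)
Your proposal is correct and follows essentially the same route as the paper: the identical construction $\gamma(s)=\bigl(1+\chi(s)\bigr)(\cos 2\pi s,\sin 2\pi s)$ with $\supp\chi=[0,1/4]+\Z$ and sign change at $1/8$, the same angles $\theta_1=0$, $\theta_2=\pi/2$ giving rotation number $1/2$, the same attracting $2$-periodic orbit at $3/8$ analyzed via monotone bounded iterates, and the same conclusion via Lemma \ref{lem:attract} followed by propagation with \eqref{eq:symf}. The monotonicity inequalities you flag as the delicate step are likewise asserted (not verified in detail) in the paper, stated there as $s-1/2<\Phi(s)<3/8$ on $(3/4,7/8)$ and $\Phi(s)<s-1/2$ on $(7/8,1)$.
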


\section*{Acknowledgements}
The authors wish to thank Freddy Manning and Nicolas Gourmelon for helpful discussions.

The authors kindly acknowledge financial support from the French ANR programs ANR
2011 BS01 007 01 (GeMeCod), ANR-12-BS01-0001 (Aventures).
This study has been carried out with financial support from the French State, managed
by the French National Research Agency (ANR) in the frame of the ”Investments for
the future” Programme IdEx Bordeaux - CPU (ANR-10-IDEX-03-02).

Part of this work was conducted while the first author was
visiting the Erwin Shr\"odinger Institute, Vienna, Austria.


\begin{thebibliography}{99}

\bibitem[AB]{AB}
\textsc{W.\,O. Amrein \& A.\,M. Berthier}
\newblock{\em On support properties of $L\sp{p}$-functions and their Fourier transforms.}
J. Funct. Anal. {\bf  24} (1977), 258-267.

\bibitem[BMR]{BMR}
\textsc{C. B\'elisle, J.-C. Mass\'e \& Th. Ransford}
\newblock{\em When is a probability measure determined by infinitely many projections.}
Ann. Probab. {\bf 25} (1997), 767-786.

\bibitem[Be]{Be}
\textsc{M. Benedicks}
\newblock{\em On Fourier transforms of functions supported on sets of finite Lebesgue measure.}
J. Math. Anal. Appl. {\bf 106} (1985), 180-183.

\bibitem[BL]{BL}
\textsc{J. Boman \& F. Lindskog}
\newblock{\em Radon transform and Cram\'er-Wold theorems.}
J. Theor. Prob. {\bf 22} (2009), 683-710.

\bibitem[CMHMR]{CMHMR}
\textsc{F. Canto-Mart\'in, H. Hedenmalm \& A. Montes-Rodr\'iguez}
\newblock{\em Perron-Frobenius operators and the Klein-Gordon equation.}
 J. Eur. Math. Soc. (JEMS) {\bf 16} (2014), 31-66.

\bibitem[CW]{CW}
\textsc{H. Cram\'er \& H. Wold}
\newblock{\em Some theorems on distribution functions.}
J. London Math. Soc. {\bf 11} (1936), 290-294.

\bibitem[FS]{FS}
\textsc{G.\,B. Folland \& A. Sitaram}
\newblock{\em The uncertainty principle: a mathematical survey.}
 J. Fourier Anal. Appl. {\bf 3} (1997), 207-238.

\bibitem[HHK]{HHK}
\textsc{M.\,G. Hahn, P. Hahn \& M. J. Klass}
\newblock{\em Pointwise translation of the Radon transform and the general central limit problem.}
Ann. Probab. {\bf 11} (1983), 277-301.

\bibitem[HQ]{HQ}
\textsc{M.\,G. Hahn \& E.\,T. Quinto}
\newblock{\em Distances between measures from $1$-dimensional projections as implied
by continuity of the inverse Radon transform.}
Z. Wahrsch. Verw. Gebiete {\bf 70} (1985), 361-380.

\bibitem[HJ]{HJ}
\textsc{V. Havin \& B. J\"oricke}
\newblock{\em The uncertainty principle in harmonic analysis.}
Springer-Verlag, Berlin, (1994).

\bibitem[HMR]{HMR}
\textsc{H. Hedenmalm \& A. Montes-Rodr\'iguez}
\newblock{\em Heisenberg uniqueness pairs and the Klein-Gordon equation.}
Ann. of Math. (2), {\bf 173} (2011), 1507-1527.

\bibitem[He1]{He}
\textsc{M. Herman}
\newblock{\em Sur la conjuguaison diff\'erentiable des diff\'eomorphismes du cercle \`a des rotations.}
Publ. Math (IHES) {\bf 49} (1979), 5-234.

\bibitem[He2]{He2}
\textsc{M. Herman}
\newblock{\em Mesure de Lebesgue et nombre de rotations.}
Springer Lecture Notes {\bf 597} (1977), 271-293.


\bibitem[Le]{Le}
\textsc{N. Lev}
\newblock{\em Uniqueness theorems for Fourier transforms.}
Bull. Sci. Math., {\bf 135} (2011), 134-140.

\bibitem[Sj]{Sjo1}
\textsc{P. Sj\"olin}
\newblock{\em Heisenberg uniqueness pairs and a theorem of Beurling and Malliavin.}
Bull. Sci. Math., {\bf 135} (2011), 125-133.

\bibitem[Sj]{Sjo}
\textsc{P. Sj\"olin}
\newblock{\em Heisenberg uniqueness pairs for the parabola.}
Jour. Four. Anal. Appl., {\bf 19} (2013), 410-416.


\bibitem[Yo]{Yo}
\textsc{J.-C. Yoccoz}
\newblock{\em Conjuguaison diff\'erentiable des diff\'eomorphismes du cercle dont le nombre de rotation v\'erifie une condition dioffantienne.}
Ann. Sci. En. Norm. Sup. {\bf 17} (1984), 333-361.


\end{thebibliography}
\end{document}